\documentclass[oneside,american,english,11pt]{amsart}
\usepackage[T1]{fontenc}
\usepackage[utf8]{inputenc}
\usepackage{babel}
\usepackage{amstext}
\usepackage{amsthm}
\usepackage{amssymb}

\makeatletter
\numberwithin{equation}{section}
\numberwithin{figure}{section}
\theoremstyle{plain}
\newtheorem{thm}{\protect\theoremname}[section]
\theoremstyle{definition}
\newtheorem{defn}[thm]{\protect\definitionname}
\theoremstyle{plain}
\newtheorem{prop}[thm]{\protect\propositionname}
\theoremstyle{plain}
\newtheorem{lem}[thm]{\protect\lemmaname}
\theoremstyle{remark}
\newtheorem{rem}[thm]{\protect\remarkname}

\usepackage{ae,aecompl} 



\makeatother

\addto\captionsamerican{\renewcommand{\definitionname}{Definition}}
\addto\captionsamerican{\renewcommand{\lemmaname}{Lemma}}
\addto\captionsamerican{\renewcommand{\propositionname}{Proposition}}
\addto\captionsamerican{\renewcommand{\remarkname}{Remark}}
\addto\captionsamerican{\renewcommand{\theoremname}{Theorem}}
\addto\captionsenglish{\renewcommand{\definitionname}{Definition}}
\addto\captionsenglish{\renewcommand{\lemmaname}{Lemma}}
\addto\captionsenglish{\renewcommand{\propositionname}{Proposition}}
\addto\captionsenglish{\renewcommand{\remarkname}{Remark}}
\addto\captionsenglish{\renewcommand{\theoremname}{Theorem}}
\providecommand{\definitionname}{Definition}
\providecommand{\lemmaname}{Lemma}
\providecommand{\propositionname}{Proposition}
\providecommand{\remarkname}{Remark}
\providecommand{\theoremname}{Theorem}

\begin{document}
\selectlanguage{american}%
\global\long\def\epsilon{\varepsilon}%

\global\long\def\phi{\varphi}%

\global\long\def\RR{\mathbb{R}}%

\global\long\def\SS{\mathbb{S}}%

\global\long\def\oo{\mathbf{1}}%

\global\long\def\dd{\mathrm{d}}%

\global\long\def\div{\operatorname{div}}%

\global\long\def\cvx{\operatorname{Cvx}}%

\global\long\def\lc{\operatorname{LC}}%

\global\long\def\dom{\operatorname{dom}}%

\global\long\def\HH{\mathcal{H}}%

\title{\selectlanguage{english}%
Surface area measures of log-concave functions}
\author{Liran Rotem}
\address{Department of Mathematics, Technion - Israel Institute of Technology,
Israel}
\email{lrotem@technion.edu}
\thanks{The author is partially supported by ISF grant 1468/19 and BSF grant
2016050.}
\begin{abstract}
This paper's origins are in two papers: One by Colesanti and Fragalà
studying the surface area measure of a log-concave function, and one
by Cordero-Erausquin and Klartag regarding the moment measure of a
convex function. These notions are the same, and in this paper we
continue studying the same construction as well as its generalization. 

In the first half the paper we prove a first variation formula for
the integral of log-concave functions under minimal and optimal conditions.
We also explain why this result is a common generalization of two
known theorems from the above papers. 

In the second half we extend the definition of the functional surface
area measure to the $L^{p}$-setting, generalizing a classic definition
of Lutwak. In this generalized setting we prove a functional Minkowski
existence theorem for even measures. This is a partial extension of
a theorem of Cordero-Erausquin and Klartag that handled the case $p=1$
for not necessarily even measures. 
\end{abstract}

\maketitle

\section{\label{sec:introduction}Functional surface area measures}

This paper has two main parts, so it also has two introductory sections.
In this section we give the necessary background concerning surface
area measures and their functional extensions. We conclude it by stating
our first main theorem, which is proved in Sections \ref{sec:first-variation}
and \ref{sec:variation-formula}. In Section \ref{sec:Lp-intro} we
introduce the Minkowski problem and its known functional analogues,
and state our second main theorem. This second theorem is proved in
Section \ref{sec:Lp-existence}.

We begin this section by recalling the classical definition of the
surface area measure of a convex body. For us, a \emph{convex body}
is a convex and compact set $K\subseteq\RR^{n}$ with non-empty interior.
The \emph{support function} $h_{K}:\RR^{n}\to\RR$ of $K$ is defined
by 
\[
h_{K}(y)=\max_{x\in K}\left\langle x,y\right\rangle ,
\]
 where $\left\langle \cdot,\cdot\right\rangle $ denotes the standard
inner product on $\RR^{n}$. For points $x\in\RR^{n}$ we will write
$\left|x\right|=\sqrt{\left\langle x,x\right\rangle }$ for the Euclidean
norm, while for convex bodies we will write $\left|K\right|$ for
their (Lebesgue) volume. The use of the same notation for both should
not cause any confusion.

If $K,L\subseteq\RR^{n}$ are convex bodies and $t\ge0$ we write
\[
K+tL=\left\{ x+ty:\ x\in K,\ y\in L\right\} 
\]
 for the Minkowski addition. A fundamental fact in convex geometry
is that for every convex body $K$ there exists a Borel measure $S_{K}$
on the unit sphere $\SS^{n-1}=\left\{ x\in\RR^{n}:\ \left|x\right|=1\right\} $
such that 
\begin{equation}
\lim_{t\to0^{+}}\frac{\left|K+tL\right|-\left|K\right|}{t}=\int_{\SS^{n-1}}h_{L}\dd S_{K}\label{eq:body-area-measure}
\end{equation}
 for every convex body $L$. The measure $S_{K}$ is called the surface
area measure of the body $K$. For a proof of this fact, as well as
alternative equivalent definitions of $S_{K}$ and general background
in convex geometry, we refer the reader to \cite{Schneider2013}. 

Over the last two decades it became more and more apparent that important
problems in convexity and asymptotic analysis can be attacked by embedding
the class of convex bodies into appropriate classes of functions or
measures on $\RR^{n}$. The idea is to think of such analytic objects
as ``generalized convex bodies'', and study geometric constructions
such as volume, addition and support functions on these larger classes.
The motivation behind this idea is twofold. First, even if one is
ultimately only interested in convex geometry and convex bodies, working
in such a larger class can be extremely useful as it allows the use
of various analytic and probabilistic tools. Second, since we are
now working with functions and measures, the geometrically inspired
theorems we obtain can be of interest in analysis. In fact, the main
theorem we prove in Section \ref{sec:Lp-existence} can be viewed
as a theorem about the existence of generalized solutions to a certain
PDE, as we will see. 

Since this fundamental idea of ``functional convexity'' is so widespread
nowadays, it is impossible to pick a manageable list of representative
papers to cite. Instead we settle for referring the reader to Section
9.5 of \cite{Schneider2013}, to the survey \cite{Milman2008} and
to the references therein. Unfortunately these only cover slightly
older results and not the massive explosion of the field over the
last decade. 

In this paper we will study functional surface area measures. We will
work with the standard class of log-concave functions:
\begin{defn}
A function $f:\RR^{n}\to[0,\infty)$ is called log-concave if for
every $x,y\in\RR^{n}$ and every $0\le\lambda\le1$, 
\[
f\left((1-\lambda)x+\lambda y\right)\ge f(x)^{1-\lambda}f(y)^{\lambda}.
\]

For every convex body $K$, the indicator function $\oo_{K}$ is log-concave.
This gives a natural embedding of the class of convex bodies into
the class of log-concave functions. We will always assume that our
log-concave functions are upper semi-continuous, which is analogous
to assuming the body $K$ is closed. We denote the class of all upper
semi-continuous log-concave functions $f:\RR^{n}\to[0,\infty)$ by
$\lc_{n}$. Every $f\in\lc_{n}$ is of the form $f=e^{-\phi}$ for
a lower semi-continuous convex function $\phi:\RR^{n}\to(-\infty,\infty]$.
We denote the class of such convex functions by $\cvx_{n}$. 

In order to have a functional analogue of \eqref{eq:body-area-measure}
we first need to understand a few things: what is the ``volume''
of log-concave functions, how to add such functions, what is the support
function $h_{f}$ of a log-concave function $f$, and most importantly
what is the surface area measure $S_{f}$ of $f$.

The easiest of the four is the volume. Since $\left|K\right|=\int\oo_{K}$,
it makes sense to define the ``volume'' of $f$ to be its Lebesgue
integral $\int f$ (unless explicitly stated otherwise, our integrals
will always be on $\RR^{n}$ with respect to the Lebesgue measure).
As for addition and support functions, recall that for convex bodies
the two operations are intimately connected by the relation $h_{K+tL}=h_{K}+th_{L}$.
If one wants to keep this relation for log-concave functions, and
have other natural properties such as monotonicity, it turns out that
there is essentially only one possible definition:
\end{defn}

\begin{thm}[\cite{Rotem2013}]
Assume we are given a map $\mathcal{T}:\lc_{n}\to\cvx_{n}$ and a
map $\oplus:\cvx_{n}\times\cvx_{n}\to\cvx_{n}$ such that 
\begin{enumerate}
\item $f\le g$ if and only if $\mathcal{T}f\le\mathcal{\mathcal{T}}g$.
\item $\mathcal{T}\oo_{K}=h_{K}$.
\item $\mathcal{T}(f\oplus g)=\mathcal{T}f+\mathcal{T}g$. 
\end{enumerate}
Then:
\begin{enumerate}
\item There exists $C>0$ such that 
\[
\left(\mathcal{T}f\right)(x)=\frac{1}{C}\cdot(-\log f)^{\ast}\left(Cx\right).
\]
\item We have 
\[
\left(f\oplus g\right)(x)=\sup_{y\in\RR^{n}}f(y)g(x-y).
\]
\end{enumerate}
\end{thm}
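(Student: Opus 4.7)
The plan is to pass to the convex-function side by setting $\tilde{\mathcal{T}}\phi = \mathcal{T}(e^{-\phi})$, so that $\tilde{\mathcal{T}}:\cvx_n\to\cvx_n$ is strictly order-reversing (hence injective) by (1), satisfies $\tilde{\mathcal{T}}\phi_K = h_K = \phi_K^\ast$ by (2) (where $\phi_K$ is the convex indicator), and by (3) intertwines a binary operation on $\cvx_n$ with pointwise addition. The injectivity of $\mathcal{T}$ implies that $\oplus$ is uniquely determined by $\mathcal{T}$, so it suffices to identify $\mathcal{T}$.

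First I would extract the action of $\mathcal{T}$ on a rich enough test family. Applying (3) to two indicators gives $\mathcal{T}(\oo_K \oplus \oo_L) = h_K + h_L = h_{K+L} = \mathcal{T}\oo_{K+L}$, so $\oo_K \oplus \oo_L = \oo_{K+L}$ by injectivity; in particular $\oo_{\{0\}}$ is the neutral element for $\oplus$ (this extends to all of $\lc_n$ since $\mathcal{T}(f\oplus\oo_{\{0\}})=\mathcal{T}f+0=\mathcal{T}f$). Next I would analyse the scalar multiples $c\oo_{\{0\}}$: setting $\alpha(c) = \mathcal{T}(c\oo_{\{0\}})$, I would use order-preservation against $\oo_{\{0\}}$ and against indicators of shrinking convex bodies around the origin to argue that $\alpha(c)$ is a constant function on $\RR^n$. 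The semigroup property forced by (3) then gives $\alpha(cc') = \alpha(c) + \alpha(c')$, and monotonicity in $c$ forces $\alpha(c) = (\log c)/C$ for a unique constant $C > 0$. A parallel argument, sandwiching the result between indicator computations, yields $c\oo_K \oplus c'\oo_L = cc'\oo_{K+L}$ and $\mathcal{T}(c\oo_K) = h_K + (\log c)/C$, so the claimed formula already holds on the class of tent pieces $c\oo_K$.

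The final step is to pass from tent pieces to general $f = e^{-\phi} \in \lc_n$. Writing $f = \sup_{t>0} t\oo_{K_t}$ with $K_t = \{f \ge t\}$, order-preservation forces $\mathcal{T}f \ge h_{K_t} + (\log t)/C$ for every $t$, and a direct computation identifies the supremum in $t$ with $\frac{1}{C}\phi^\ast(Cx)$; the matching upper bound is obtained by sandwiching $f$ from above by finite combinations of tent pieces under $\oplus$ and again invoking injectivity. Once $\mathcal{T}$ is identified, the formula for $\oplus$ follows automatically from the identity $(-\log(f\star g))^\ast = (-\log f)^\ast + (-\log g)^\ast$ for the sup-convolution $\star$. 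The main obstacle I anticipate is this last extension: the axioms provide no a priori continuity of $\mathcal{T}$, so one must carefully squeeze $\mathcal{T}f$ between values dictated by the tent-piece analysis, leveraging only the monotonicity in (1) and the finite additivity in (3).
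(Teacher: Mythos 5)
A preliminary remark: this theorem is imported from \cite{Rotem2013} and the present paper contains no proof of it, so there is no in-text argument to compare yours against; I am judging the sketch on its own terms. Your route (indicators, then constants, then tent pieces $c\oo_{K}$, then general $f$ by monotone sandwiching) is the natural one, and your lower bound in the last step is sound: $f\ge t\oo_{K_{t}}$ gives $\mathcal{T}f\ge h_{K_{t}}+\tfrac{\log t}{C}$, and the supremum over $t$ is exactly $\tfrac{1}{C}\phi^{\ast}(Cx)$. But three steps have genuine gaps. (i) In this paper a convex body has non-empty interior, so $\{0\}$ is not a body and axiom (2) does not give $\mathcal{T}\oo_{\{0\}}=0$. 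Squeezing from above by $\oo_{\epsilon B_{2}^{n}}$ yields $\mathcal{T}(c\oo_{\{0\}})\le0$ for $c\le1$, and a convex function bounded above on all of $\RR^{n}$ is constant, so that part of your claim is salvageable; but nothing in the known family lies \emph{below} $c\oo_{\{0\}}$, so the matching lower bound, and with it the neutral-element identity $\mathcal{T}\oo_{\{0\}}=0$, does not follow from monotonicity alone. (ii) The relation $\alpha(cc')=\alpha(c)+\alpha(c')$ is not ``forced by (3)'': axiom (3) only gives $\alpha(\beta(c,c'))=\alpha(c)+\alpha(c')$, where $\beta(c,c')$ is defined by $c\oo_{\{0\}}\oplus c'\oo_{\{0\}}=\beta(c,c')\oo_{\{0\}}$; the assertion $\beta(c,c')=cc'$ is a special case of conclusion (2) of the theorem, so invoking it here is circular. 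Some independent input (for instance the scaling $h_{\lambda K}=\lambda h_{K}$ played against additivity) is needed to pin $\alpha$ down to $\tfrac{\log c}{C}$.

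(iii) The final upper bound is the crux, and ``sandwiching $f$ from above by finite combinations of tent pieces under $\oplus$'' cannot work as stated. For a fixed $x_{0}$, the envelopes of $f$ from above that realize the value $\tfrac{1}{C}\phi^{\ast}(Cx_{0})$ are the functions $e^{-\ell}$ with $\ell$ affine and $\ell\le\phi$; their level sets are half-spaces, so they are not of the form $c\oo_{K}$ for a body $K$, nor are they dominated by any finite $\oplus$-combination of such. Hence the tent-piece computation gives no upper control whatsoever on $\mathcal{T}f(x_{0})$. The missing tool is the reverse implication in axiom (1), which your sketch exploits only for injectivity: from $\mathcal{T}g\le\mathcal{T}f$ one concludes $g\le f$, so if $\mathcal{T}f$ exceeded $\tfrac{1}{C}\phi^{\ast}(C\cdot)$ somewhere one should produce a tent piece $g=c\oo_{K}$ with $h_{K}+\tfrac{\log c}{C}\le\mathcal{T}f$ but $g\not\le f$, contradicting (1). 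Even this needs care for degenerate $f$ (no function $h_{K}+\mathrm{const}$ with $K$ a body sits below an affine function), but some such backward use of (1) is unavoidable. As written, your proposal establishes only the inequality $\mathcal{T}f\ge\tfrac{1}{C}\phi^{\ast}(C\cdot)$, and only modulo the constant analysis in (i)--(ii).
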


The $^{\ast}$ that appears in the theorem is the \emph{Legendre transform}
map $^{\ast}:\cvx_{n}\to\cvx_{n}$, defined by 
\begin{equation}
\phi^{\ast}(y)=\sup_{x\in\RR^{n}}\left(\left\langle x,y\right\rangle -\phi(x)\right).\label{eq:legendre-def}
\end{equation}
 Therefore, for a log-concave function $f=e^{-\phi}$ we define its
support function $h_{f}:\RR^{n}\to(-\infty,\infty]$ by $h_{f}=\phi^{\ast}$.
We also define the addition of log-concave functions to be 
\[
\left(f\star g\right)(x)=\sup_{y\in\RR^{n}}f(y)g(x-y).
\]
 This addition is also known as the sup-convolution, or Asplund sum,
and as an addition on log-concave functions was it first considered
in \cite{Klartag2005}. Like the support function $h_{f}$, it is
by now a standard definition in convex geometry. For $t>0$ and $f\in\lc_{n}$
we also define $\left(t\cdot f\right)(x)=f\left(\frac{x}{t}\right)^{t}$.
This dilation operation is consistent with the sup-convolution is
the sense that $2\cdot f=f\star f$. 

With these standard definitions in our disposal, we may now define:
\begin{defn}
For $f,g\in\lc_{n}$ we define 
\[
\delta(f,g)=\lim_{t\to0^{+}}\frac{\int\left(f\star\left(t\cdot g\right)\right)-\int f}{t}.
\]
 
\end{defn}

In other words, $\delta(f,g)$ is the directional derivative of the
integral at the point $f$ and in the direction $g.$ The first to
seriously study this quantity were Colesanti and Fragalà, who proved
in \cite{Colesanti2013} that $\delta(f,g)$ is well defined whenever
$\int f>0$. More importantly, they were able to prove a functional
version of \eqref{eq:body-area-measure}:
\begin{defn}
\label{def:func-surface-area}For $f=e^{-\phi}\in\lc_{n}$, we define
its surface area measure $S_{f}$ as the push-forward of the measure
$f\dd x$ under the map $\nabla\phi$. More explicitly, for every
Borel subset $A\subseteq\RR^{n}$ we define 
\[
S_{f}(A)=\int_{\left\{ x:\ \nabla\phi(x)\in A\right\} }f\dd x.
\]
\end{defn}

Equivalently, if $f=e^{-\phi}$ then $S_{f}$ is the unique Borel
measure on $\RR^{n}$ such that 
\begin{equation}
\int_{\RR^{n}}\rho(y)\dd S_{f}(y)=\int_{\RR^{n}}\rho\left(\nabla\phi(x)\right)f(x)\dd x\label{eq:push-forward}
\end{equation}
 for all functions $\rho$ for which the left hand side is well defined
(it could be $\pm\infty$). Note that this definition does not require
any regularity assumptions from $f$. Indeed, since $\phi$ is a convex
function it is differentiable almost everywhere on the set $\dom\phi=\left\{ x\in\RR^{n}:\ \phi(x)<\infty\right\} $
\textendash{} see e.g. \cite{Rockafellar1970} for this fact as well
as other basic analytic properties of convex functions. Therefore
$\nabla\phi$ exists $f\dd x$-a.e. , which means the push-forward
is well defined. 

Definition \ref{def:func-surface-area} can seem a bit strange at
first. For example, we note that $S_{f}\left(\RR^{n}\right)=\int f$,
which is the ``volume'' of $f$ and not its ``surface area''.
This is unlike the classical case of convex bodies, where we have
$S_{K}\left(\SS^{n-1}\right)=\left|\partial K\right|$, the surface
area of $K$. However, at least for sufficiently regular functions
$f$ it turns out that $S_{f}$ is indeed the correct definition for
a surface area measure, because of the following theorem of Colesanti
and Fragalà:
\begin{thm}[\cite{Colesanti2013}]
\label{thm:colesanti-fragala}Fix $f,g\in\lc_{n}$. Assume that $\phi=-\log f$
and $\psi=-\log g$ belong to the class 
\[
\left\{ \rho\in\cvx_{n}:\ \begin{array}{l}
\rho\text{ is finite and }C^{2}\text{-smooth, }\text{\ensuremath{\nabla^{2}\rho(x)\succ0}}\\
\text{for all }x\in\RR^{n},\text{ and }\text{\ensuremath{\lim_{\left|x\right|\to\infty}\frac{\rho(x)}{\left|x\right|}=\infty}}
\end{array}\right\} .
\]
 Assume further that $h_{f}-ch_{g}$ is a convex function for sufficiently
small $c>0$. Then 
\begin{equation}
\delta(f,g)=\int_{\RR^{n}}h_{g}\dd S_{f}.\label{eq:func-first-var}
\end{equation}
 
\end{thm}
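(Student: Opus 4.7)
The plan is to compute the $t$-derivative of $\phi_{t}:=-\log\bigl(f\star(t\cdot g)\bigr)$ pointwise, show it equals $-h_{g}\circ\nabla\phi$, and interchange this derivative with the integral in $\int e^{-\phi_{t}(x)}\,\dd x$. This will directly yield $\delta(f,g)=\int_{\RR^{n}}h_{g}(\nabla\phi(x))\,f(x)\,\dd x$, which by Definition~\ref{def:func-surface-area} is exactly $\int_{\RR^{n}}h_{g}\,\dd S_{f}$.

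For the pointwise derivative, the key observation is that the Legendre transform turns sup-convolution into addition, so $\phi_{t}^{\ast}=\phi^{\ast}+t\psi^{\ast}=h_{f}+th_{g}$. Inverting the Legendre transform gives
\begin{equation*}
\phi_{t}(x)=\sup_{y\in\RR^{n}}\bigl[\langle x,y\rangle-h_{f}(y)-th_{g}(y)\bigr].
\end{equation*}
Under the hypotheses of the theorem, both $h_{f}$ and $h_{g}$ are smooth and strictly convex, and $h_{g}=\psi^{\ast}\ge -\psi(0)$ is bounded below; consequently, for every $t\ge 0$ the function $h_{f}+th_{g}$ is smooth, strictly convex and superlinear. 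The supremum above is therefore attained at a unique point $y^{\ast}(x,t)$ characterized by $x=\nabla h_{f}(y^{\ast})+t\nabla h_{g}(y^{\ast})$; at $t=0$ this forces $y^{\ast}(x,0)=\nabla\phi(x)$. Since $y^{\ast}$ depends smoothly on $t$ near $t=0$ by the implicit function theorem, the envelope theorem yields
\begin{equation*}
\partial_{t}\phi_{t}(x)\big|_{t=0^{+}}=-h_{g}(y^{\ast}(x,0))=-h_{g}(\nabla\phi(x)).
\end{equation*}

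The main obstacle is the interchange of derivative and integral, for which one needs an integrable dominant of the difference quotient $\tfrac{1}{t}\bigl(e^{-\phi_{t}(x)}-e^{-\phi(x)}\bigr)$ that is uniform in $t$ near $0$. This is precisely where the hypothesis that $h_{f}-ch_{g}$ is convex for some $c>0$ enters: writing $h_{f}+th_{g}$ for $t\in[-c,0]$ as the convex combination $\tfrac{c+t}{c}h_{f}+\tfrac{-t}{c}(h_{f}-ch_{g})$ shows that $h_{f}+th_{g}$ is convex on the two-sided interval $t\in[-c,\infty)$, so $\phi_{t}\in\cvx_{n}$ is well-defined throughout, and combined with the superlinearity of $h_{f}$ it yields uniform control on $\phi_{t}(x)$ and on $\partial_{t}\phi_{t}=-h_{g}\circ y^{\ast}$ sufficient to produce an integrable upper bound for the difference quotient. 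Dominated convergence then gives
\begin{equation*}
\delta(f,g)=\int_{\RR^{n}}\bigl(-\partial_{t}\phi_{t}(x)\big|_{t=0^{+}}\bigr)\,f(x)\,\dd x=\int_{\RR^{n}}h_{g}(\nabla\phi(x))\,f(x)\,\dd x=\int_{\RR^{n}}h_{g}\,\dd S_{f},
\end{equation*}
completing the proof.
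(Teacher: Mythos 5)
Your first step is sound: the identity $h_{f\star(t\cdot g)}=h_f+th_g$, the unique maximizer $y^{\ast}(x,0)=\nabla\phi(x)$, and the envelope/implicit-function argument giving $\partial_t\phi_t(x)|_{t=0^+}=-h_g(\nabla\phi(x))$ are exactly what Proposition \ref{prop:pointwise-der} and Lemma \ref{lem:unique-maximizer} establish (there even without smoothness). The gap is in the interchange of derivative and integral, which is the entire difficulty of the theorem; your one-sentence appeal to ``uniform control \ldots{} sufficient to produce an integrable upper bound'' never exhibits the dominating function. The easy half is the minorant: since $h_g=\psi^{\ast}\ge-\psi(0)=:-m$, one gets $\phi_t\le\phi_0+tm$ and hence $\frac{e^{-\phi_t}-e^{-\phi_0}}{t}\ge\frac{e^{-tm}-1}{t}\,e^{-\phi_0}\ge-m\,e^{-\phi_0}$, which is integrable. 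For the majorant, the convexity of $t\mapsto\phi_t(x)$ on $[-c,\infty)$ (your convex-combination observation) only yields $\phi_t-\phi_0\ge-tA(x)$ with $A=\frac{\phi_{-c}-\phi_0}{c}$ (after shrinking $c$ so that $h_f-ch_g$ is still superlinear and $\phi_{-c}$ is finite), whence
\begin{equation*}
\frac{e^{-\phi_t}-e^{-\phi_0}}{t}\le e^{-\phi_0}\cdot\frac{e^{tA(x)}-1}{t},
\end{equation*}
and dominated convergence would require $\int A_+e^{t_0A_+}e^{-\phi_0}<\infty$ for some $t_0>0$. That is strictly stronger than the finiteness of the target quantity $\int h_g(\nabla\phi)e^{-\phi}$, and neither the superlinearity of $h_f$ nor the convexity of $h_f-ch_g$ delivers it; this is precisely where a proof by naive dominated convergence breaks down.

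For comparison, the paper does not prove Theorem \ref{thm:colesanti-fragala} along these lines at all: it obtains it as a special case of Theorem \ref{thm:first-variation} (a finite, smooth, superlinear $\phi$ makes $f$ everywhere positive and continuous, hence essentially continuous), and it replaces dominated convergence by a sandwich. The difference quotient $v_t=\frac{f_t-f}{t}$ is squeezed between $u_t=\frac{e^{-tm}f-f}{t}$ and $w_t=\frac{\widetilde f_t-f}{t}$ with $\widetilde g=e^{c}\cdot\oo_{mB_2^n}$ dominating $g$; convergence of $\int w_t$ is proved by hand in Proposition \ref{prop:first-variation-ball} (Minkowski polynomiality gives monotonicity in $t$, and the BV co-area formula of Theorem \ref{thm:coarea} identifies the limit as $m\int|\nabla f|$), and the two-sided Fatou argument of Lemma \ref{lem:fatou-comparison} then transfers convergence from $\int u_t$ and $\int w_t$ to $\int v_t$; general $g$ is reached by a further truncation plus log-concavity of $t\mapsto\int f_t$. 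To repair your proof you must either build such an explicit integrable majorant, or add the integrability of $A_+e^{t_0A_+}e^{-\phi}$ as a hypothesis.
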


Theorem \ref{thm:colesanti-fragala} was one of the main motivations
behind this paper. Comparing it with \eqref{eq:body-area-measure}
explains why $S_{f}$ should indeed be considered as the surface area
measure of the function $f$. At the same time, the assumptions of
the theorem are definitely not optimal. For example, it was already
proved in \cite{Rotem2012} that when $f(x)=e^{-\left|x\right|^{2}/2}$,
the first variation formula \eqref{eq:func-first-var} holds for all
$g\in\lc_{n}$ with no technical assumptions. 

However, there is no doubt that some assumptions are needed, as \eqref{eq:func-first-var}
cannot hold for all functions $f,g\in\lc_{n}$. For example, if $f=\oo_{K}$
for some convex body $K$ then $S_{f}=\left|K\right|\cdot\delta_{0}$,
so \eqref{eq:func-first-var} cannot hold even if $g$ is also the
indicator of a convex body. We therefore see that the theory developed
in this paper is a functional analogue of the classical theory for
convex bodies, but does not formally extend it. We remark that in
\cite{Colesanti2013} the authors did define a generalization of the
measure $S_{f}$ to the case where $f$ is supported on some smooth
convex body $K$ and satisfies some technical assumptions, and proved
a generalization of Theorem \ref{thm:colesanti-fragala} for such
functions $f$. Unfortunately the indicator function $\oo_{K}$ does
not satisfy these technical assumptions, so even this more general
theorem does not recover the case of convex bodies. It's an interesting
open problem to find extensions of Definition \ref{def:func-surface-area}
and Theorem \ref{thm:colesanti-fragala} that will hold for all $f\in\lc_{n}$,
including both smooth functions and indicators of convex bodies. In
this paper, however, we will keep Definition \ref{def:func-surface-area}
as our definition of $S_{f}$, and prove a necessary and sufficient
condition on $f$ for \eqref{eq:func-first-var} to hold. 

The correct condition to impose on $f$ appeared in the work of Cordero-Erausquin
and Klartag (\cite{Cordero-Erausquin2015}). In this work the authors
study the moment measure of a convex function $\phi$. While this
term comes from the very different field of toric Kähler manifolds,
the moment measure of $\phi$ is precisely the same measure as the
surface area measure $S_{e^{-\phi}}$. We refer the reader to \cite{Cordero-Erausquin2015}
and the previous works cited therein (especially \cite{Berman2013})
for more information about the connection between this measure and
complex geometry. The results of \cite{Cordero-Erausquin2015} and
of Section \ref{sec:Lp-existence} below can also be considered as
establishing the existence of generalized solutions to certain Monge\textendash Ampère
differential equations. This shows again how functional results that
are motivated by convex geometry can have applications to very different
areas of mathematics. 

In any case, the crucial definition from \cite{Cordero-Erausquin2015}
is the following:
\begin{defn}
Fix $f\in\lc_{n}$ with $0<\int f<\infty$. We say that $f$ is essentially
continuous if 
\[
\HH^{n-1}\left(\left\{ x\in\RR^{n}:\ f\text{ is not continuous at }x\right\} \right)=0,
\]
 where $\HH^{n-1}$ denotes the $(n-1$)-dimensional Hausdorff measure
(see, e.g. Chapter 2 of \cite{Evans1992} for definition of the Hausdorff
measure).
\end{defn}

To explain this definition, let us write $K=\overline{\left\{ x:\ f(x)\ne0\right\} }$
for the support of $f$. Since $f=e^{-\phi}$ for a convex function
$\phi$ and convex functions are continuous on the interior of their
domain, $f$ is continuous everywhere outside of $\partial K$. Moreover,
since $f$ is upper semi-continuous, it is easy to see that $f$ is
continuous at a boundary point $x_{0}\in K$ if and only if $f(x_{0})=0$.
Therefore $f$ is essentially continuous if and only if $f\equiv0$
$\HH^{n-1}$-a.e. on $\partial K$. 

The following theorem is an adaptation of Theorem 8 from \cite{Cordero-Erausquin2015}
to our notation:
\begin{thm}[\cite{Cordero-Erausquin2015}]
\label{thm:subdifferential}Fix $f,g\in\lc_{n}$ with $0<\int f,\int g<\infty$.
Assume that $f$ is essentially continuous. Then 
\[
\log\int f-\log\int g\ge\frac{1}{\int f}\cdot\int\left(h_{f}-h_{g}\right)\dd S_{f}.
\]
\end{thm}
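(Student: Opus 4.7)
The plan is to combine the Prékopa--Leindler inequality with a Fenchel-duality upper bound on the sup-convolution interpolation
\[
f_{t}=\bigl((1-t)\cdot f\bigr)\star\bigl(t\cdot g\bigr),\qquad\phi_{t}=-\log f_{t},\qquad t\in(0,1),
\]
and then pass to the limit $t\to0^{+}$.

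On the lower side, Prékopa--Leindler with parameter $t$ gives $\int f_{t}\ge\bigl(\int f\bigr)^{1-t}\bigl(\int g\bigr)^{t}$, equivalently
\begin{equation}\label{eq:prop-pl}
\log\int f_{t}-\log\int f\;\geq\;t\bigl(\log\int g-\log\int f\bigr).
\end{equation}
On the upper side, Legendre duality---using $[s\rho(\cdot/s)]^{\ast}=s\rho^{\ast}$ and that infimal convolutions dualize to sums---produces $\phi_{t}^{\ast}=(1-t)h_{f}+th_{g}$, and therefore
\[
\phi_{t}(x)=\sup_{y\in\RR^{n}}\bigl[\langle x,y\rangle-(1-t)h_{f}(y)-th_{g}(y)\bigr].
\]
At a Lebesgue point $x$ where $\phi$ is differentiable, plugging $y=\nabla\phi(x)$ into this supremum and invoking Fenchel's identity $\phi^{\ast}(\nabla\phi(x))=\langle x,\nabla\phi(x)\rangle-\phi(x)$ yields the pointwise bound
\[
\phi_{t}(x)\;\geq\;\phi(x)+t\bigl[(h_{f}-h_{g})\circ\nabla\phi\bigr](x).
\]
Exponentiating, integrating, and writing $\dd\mu=\tfrac{f}{\int f}\dd x$ together with $X=(h_{g}-h_{f})\circ\nabla\phi$,
\begin{equation}\label{eq:prop-fenchel}
\int f_{t}\;\leq\;\int f\,e^{tX}\,\dd x\;=\;\Bigl(\int f\Bigr)\int e^{tX}\,\dd\mu.
\end{equation}

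Combining \eqref{eq:prop-pl} with \eqref{eq:prop-fenchel} and dividing by $t>0$ gives
\[
\log\int g-\log\int f\;\leq\;\frac{1}{t}\log\int e^{tX}\,\dd\mu\qquad\text{for every }t\in(0,1).
\]
The cumulant generating function $\Lambda(t):=\log\int e^{tX}\,\dd\mu$ is convex (by Hölder) with $\Lambda(0)=0$, so $t\mapsto\Lambda(t)/t$ is non-decreasing on $(0,\infty)$ with limit $\Lambda'(0^{+})=\int X\,\dd\mu=\tfrac{1}{\int f}\int(h_{g}-h_{f})\,\dd S_{f}$, via the push-forward definition of $S_f$. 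Letting $t\to0^{+}$ and rearranging produces the claim.

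The main technical obstacle is this final passage to the limit and, equivalently, ensuring that $\int X\,\dd\mu$ is well-defined rather than of the indeterminate form $\infty-\infty$. This is where essential continuity plays its role: the divergence identity $\int\div(xf)\,\dd x=0$, whose boundary term across $\partial\{f>0\}$ vanishes precisely because $f\equiv0$ there $\HH^{n-1}$-a.e., produces $\int\langle x,\nabla\phi\rangle f\,\dd x=n\int f$, and hence $\int h_{f}\,\dd S_{f}=n\int f-\int\phi\,f\,\dd x<\infty$ (the last integral being finite by the finiteness of the entropy of a log-concave density). The hypothesis is not cosmetic: for $f=\oo_{K}$ the measure $S_{f}$ degenerates to a point mass at the origin and the inequality fails in general.
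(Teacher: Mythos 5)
First, a point of reference: the paper does not prove Theorem \ref{thm:subdifferential} at all --- it is quoted from \cite{Cordero-Erausquin2015} --- so your argument has to stand on its own, and it does not. The fatal step is the upper bound $\int f_t\le\int f\,e^{tX}\,\dd x$. Your pointwise inequality $\phi_t(x)\ge\phi(x)+t\left(h_f-h_g\right)\left(\nabla\phi(x)\right)$ comes from testing the supremum at $y=\nabla\phi(x)$, so it is only available where $\phi$ is differentiable, i.e.\ almost everywhere on $K=\overline{\{f>0\}}$. It therefore controls $\int_K f_t$, not $\int_{\RR^n}f_t$: the function $f_t=\left((1-t)\cdot f\right)\star\left(t\cdot g\right)$ is supported on $(1-t)K+tK_g$ (with $K_g$ the support of $g$), and the mass of $f_t$ outside $K$ is generically of order $t$ --- it is exactly the boundary contribution $\int_{\partial K}f\,\dd\HH^{n-1}$ that essential continuity is needed to annihilate (compare Theorem \ref{thm:coarea} and Proposition \ref{prop:first-variation-ball}). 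Since your derivation of the bound nowhere uses essential continuity, it cannot be correct as stated. Concretely, for $f=\oo_K$ and $g=\oo_B$ one has $\nabla\phi=0$ a.e.\ and $h_f(0)=h_g(0)=0$, so $X=0$ $f\dd x$-a.e.\ and your bound reads $\left|(1-t)K+tB\right|\le\left|K\right|$, which Brunn--Minkowski refutes whenever $\left|B\right|>\left|K\right|$. The entire analytic content of the theorem --- showing that under essential continuity the exterior mass is $o(t)$ --- is thus missing; your invocation of essential continuity only for the finiteness of $\int h_f\,\dd S_f$ addresses a side issue, not the main one.

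There is a second, more technical gap in the limit $\frac{1}{t}\log\int e^{tX}\,\dd\mu\to\int X\,\dd\mu$. Monotone convergence applied to the decreasing family $(e^{tX}-1)/t$ requires $\int e^{t_0X}\,\dd\mu<\infty$ for some $t_0>0$; Jensen only gives $\frac{1}{t}\log\int e^{tX}\,\dd\mu\ge\int X\,\dd\mu$, which is the useless direction here. And $X$ can have finite mean with no exponential moments: take $f$ Gaussian and $h_g(y)\sim\left|y\right|^3$, so that $\int h_g\,\dd S_f<\infty$ but $\int e^{th_g(\nabla\phi)}f=\infty$ for every $t>0$. This part is repairable by truncating $g$ to $g_i=g\cdot\oo_{iB_2^n}$, for which $h_{g_i}$ grows at most linearly, and passing to the limit by monotone convergence --- exactly the device used in Section \ref{sec:variation-formula} --- but it has to be done. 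Your overall architecture (Prékopa--Leindler from below, a Fenchel tangent bound from above) is the right way to think about the subdifferential statement; what is absent is all of the boundary analysis that makes it true.
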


The relationship between Theorems \ref{thm:colesanti-fragala} and
\ref{thm:subdifferential} may not be immediately clear, and as far
as we know did not previously appear in the literature. To understand
it, let us define $F:\cvx_{n}\to[-\infty,\infty]$ by 
\[
F(\psi)=-\log\int e^{-\psi^{\ast}}.
\]
 In other words for every $f\in\lc_{n}$ we have $F(h_{f})=-\log\int f$,
where we are using the fact that $\psi^{\ast\ast}=\psi$ for all $\psi\in\cvx_{n}$.
Theorem \ref{thm:colesanti-fragala} is a theorem about the differential
of $F$ at some point $h_{f}$. Indeed, under its technical assumptions
we have by the chain rule 
\begin{align*}
\left.\frac{\dd}{\dd t}\right|_{t=0^{+}}F\left(h_{f}+th_{g}\right) & =\left.\frac{\dd}{\dd t}\right|_{t=0^{+}}F\left(h_{f\star\left(t\cdot g\right)}\right)=-\left.\frac{\dd}{\dd t}\right|_{t=0^{+}}\left[\log\int\left(f\star\left(t\cdot g\right)\right)\right]\\
 & =-\frac{1}{\int f}\cdot\left.\frac{\dd}{\dd t}\right|_{t=0^{+}}\int\left(f\star\left(t\cdot g\right)\right)=-\frac{1}{\int f}\cdot\int h_{g}\dd S_{f}.
\end{align*}
 In other words, the linear map $L_{f}(h_{g})=-\frac{1}{\int f}\int h_{g}\dd S_{f}$
is the differential of $F$ at the point $h_{f}$. On the other hand,
Theorem \ref{thm:subdifferential} is a theorem about the \emph{subdifferential}
of $F$. Indeed, the Prékopa\textendash Leindler inequality (\cite{Prekopa1971},
\cite{Leindler1972}) states that for every $f,g\in\lc_{n}$ with
$0<\int f,\int g<\infty$ and $0<t<1$ one has 
\[
\int\left(\left(1-t\right)\cdot f\right)\star\left(t\cdot g\right)\ge\left(\int f\right)^{1-t}\left(\int g\right)^{t},
\]
 or 
\[
F\left((1-t)h_{f}+th_{g}\right)\le(1-t)F(h_{f})+tF(h_{g}).
\]
 Therefore $F$ is convex on the appropriate domain. The conclusion
of Theorem \ref{thm:subdifferential} may be written as 
\[
F(h_{g})-F(h_{f})\ge L_{f}\left(h_{g}-h_{f}\right),
\]
 which means that $L_{f}$ belongs to the \emph{subdifferential} $\partial F(h_{f})$.
This conclusion is weaker than the conclusion of Theorem \ref{thm:colesanti-fragala},
but the assumptions are much weaker as well. In fact, Cordero-Erausquin
and Klartag show that essential continuity of $f$ is necessary for
Theorem \ref{thm:subdifferential} to hold. 

The first major goal of this paper is to prove a common generalization
of both Theorems \textendash{} We will obtain the stronger conclusion
of Theorem \ref{thm:colesanti-fragala} under the optimal assumptions
of Theorem \ref{thm:subdifferential}. More concretely, we will prove
the following result:
\begin{thm}
\label{thm:first-variation}Fix $f,g\in\lc_{n}$ with $0<\int f,\int g<\infty$.
If $f$ is essentially continuous, then
\begin{equation}
\delta(f,g)=\int h_{g}\dd S_{f}.\label{eq:moment-first-variation-1}
\end{equation}
 Moreover, \eqref{eq:moment-first-variation-1} holding for $g=\oo_{B_{2}^{n}}$
is \textbf{equivalent} to $f$ being essentially continuous. 
\end{thm}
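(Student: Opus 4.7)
The plan is to prove the equality $\delta(f,g) = \int h_g\, \dd S_f$ under essential continuity by establishing matching upper and lower bounds, and then to verify the equivalence at $g = \oo_{B_2^n}$ by a direct level-set computation. For the upper bound I would apply Theorem~\ref{thm:subdifferential} to the pair $(f, f \star (t\cdot g))$: using the identity $h_{f\star(t\cdot g)} = h_f + t h_g$, the subgradient inequality yields
\[
\log \int f - \log \int (f\star(t\cdot g)) \ge -\frac{t}{\int f}\int h_g\, \dd S_f,
\]
and dividing by $t$ and letting $t \to 0^+$ gives $\delta(f,g) \le \int h_g\, \dd S_f$.

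For the matching lower bound I would exploit the pointwise estimate $(f \star (t \cdot g))(x) \ge f(x-tz)\, g(z)^t$, valid for any $z \in \dom\psi$ with $g = e^{-\psi}$, which drops out of the inf-convolution formula $-\log(f\star(t\cdot g))(x) = \inf_z [\phi(x-tz) + t\psi(z)]$. At every $x$ in $\text{int}(\text{supp}\, f)$ where $\phi = -\log f$ is differentiable, a first-order expansion followed by supremum over $z$ delivers
\[
\liminf_{t\to 0^+}\frac{(f\star(t\cdot g))(x) - f(x)}{t} \ge f(x)\, h_g(\nabla\phi(x)).
\]
Fatou's lemma would then give $\delta(f,g) \ge \int h_g\, \dd S_f$, but this requires an integrable lower envelope for the difference quotient. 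I would produce one by first reducing to the case $g(0) > 0$: the integral $\int (f\star(t\cdot g))$ is translation invariant in $g$ (by a change of variables), while $\int h_g\, \dd S_f$ is translation invariant provided $\int y\, \dd S_f(y) = 0$, which should be established as a separate lemma. This vanishing follows from essential continuity via integration by parts, $\int \nabla \phi \cdot f\, \dd x = -\int \nabla f\, \dd x$, whose boundary term vanishes when $f \equiv 0$ $\HH^{n-1}$-a.e.\ on $\partial K$ (with $K = \text{supp}\, f$). Once $g(0) > 0$, the bound $(f\star(t\cdot g))(x) \ge f(x) g(0)^t$ gives the integrable minorant $f(x) \log g(0)$, and Fatou closes the gap.

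For the converse, I would note that for $g = \oo_{B_2^n}$ the Asplund sum reduces to Minkowski enlargement: $(f\star\oo_{tB_2^n})(x) = \sup_{|z|\le t} f(x-z)$, whose superlevel sets are $L_s + tB_2^n$ with $L_s = \{f\ge s\}$. The layer-cake formula rewrites the difference quotient as $\int_0^\infty (|L_s + tB_2^n| - |L_s|)/t \, \dd s$, and by Steiner's formula each integrand is monotone in $t$ with pointwise limit the perimeter $\HH^{n-1}(\partial L_s)$; monotone convergence yields $\delta(f, \oo_{B_2^n}) = \int_0^\infty \HH^{n-1}(\partial L_s)\, \dd s$. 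A case analysis using continuity of $\phi$ on $\text{int}(K)$ gives the decomposition $\partial L_s = (\{f = s\}\cap \text{int}(K)) \cup \{x \in \partial K : f(x)\ge s\}$, while the coarea formula applied to $f$ (using $|\nabla f| = f|\nabla\phi|$ a.e.) identifies $\int|y|\, \dd S_f = \int_0^\infty \HH^{n-1}(\{f=s\}\cap\text{int}(K))\, \dd s$. Subtracting and applying Cavalieri on $\partial K$ gives the telling identity
\[
\delta(f,\oo_{B_2^n}) - \int |y|\, \dd S_f = \int_{\partial K} f\, \dd\HH^{n-1},
\]
whose right-hand side vanishes iff $f \equiv 0$ $\HH^{n-1}$-a.e.\ on $\partial K$, i.e.\ iff $f$ is essentially continuous.

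The main obstacle is the Fatou step in the lower bound of the first part: producing an integrable minorant for the difference quotient relies on the reduction to $g(0) > 0$, which itself rests on the auxiliary lemma $\int y\, \dd S_f = 0$ and a careful integration-by-parts argument in which essential continuity plays the key role. The level-set argument for the ball is conceptually cleaner but still demands careful handling of the coarea formula for the convex, possibly non-smooth function $\phi$, together with the correct identification of $\partial L_s$ at points where $f$ fails to vanish on $\partial K$.
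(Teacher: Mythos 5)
Your proposal is correct in outline, and the converse (the case $g=\oo_{B_{2}^{n}}$) follows essentially the paper's own route: layer-cake decomposition, Minkowski polynomiality to justify monotone convergence, and identification of the discrepancy between $\int_{0}^{\infty}\HH^{n-1}(\partial K_{s})\,\dd s$ and $\int\left|\nabla f\right|$ as the boundary term $\int_{\partial K}f\,\dd\HH^{n-1}$ (the paper packages this as Theorem \ref{thm:coarea} via the BV co-area formula, whereas you decompose $\partial L_{s}$ directly; same substance, and your version needs the usual ``for a.e.\ $s$'' caveats).

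For the main direction your route is genuinely different from the paper's. You obtain the upper bound $\limsup_{t\to0^{+}}\frac{1}{t}\left(\int f_{t}-\int f\right)\le\int h_{g}\,\dd S_{f}$ by applying Theorem \ref{thm:subdifferential} to the pair $\left(f,f\star(t\cdot g)\right)$ and using $h_{f\star(t\cdot g)}=h_{f}+th_{g}$, and the lower bound by a one-sided pointwise expansion plus Fatou with the minorant $f\log g(0)$ after translating $g$ so that $g(0)=\max g$ (the translation being harmless because $\int\nabla f=0$, i.e.\ Proposition \ref{prop:int-grad}). The paper instead proves the result self-containedly: it sandwiches the difference quotient between $\frac{e^{-tm}f-f}{t}$ and the quotient for $\widetilde{g}=e^{c}\cdot\oo_{mB_{2}^{n}}$, uses the already-established ball case as the convergent upper envelope (Lemma \ref{lem:fatou-comparison}), and then removes the linear-growth restriction on $h_{g}$ by truncating $g$ to balls and interchanging two suprema via log-concavity of $t\mapsto\int f_{t}$ (Prékopa--Leindler). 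Your approach is shorter and avoids the truncation step entirely, at the price of invoking Theorem \ref{thm:subdifferential} as a black box; this is legitimate (that theorem is an independently proved external result), but it makes the proof less self-contained and obscures the fact, emphasized in the paper, that Theorem \ref{thm:first-variation} strengthens Theorem \ref{thm:subdifferential} rather than merely following from it. Two small points you should still nail down: (i) to apply Theorem \ref{thm:subdifferential} you need $0<\int\left(f\star(t\cdot g)\right)<\infty$ for small $t$ (standard, but not free); and (ii) passing from the bound on $\frac{1}{t}\left(\log\int f_{t}-\log\int f\right)$ to the bound on $\frac{1}{t}\left(\int f_{t}-\int f\right)$ requires $\int f_{t}\to\int f$, which you get from the two-sided control $g(0)^{t}\int f\le\int f_{t}\le e^{tA}\int f$ when $A=\frac{1}{\int f}\int h_{g}\,\dd S_{f}<\infty$ (and when $A=\infty$ the upper bound is vacuous and Fatou alone gives $+\infty=+\infty$).
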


We immediately remark that it is not necessarily true that both sides
of \eqref{eq:moment-first-variation-1} are finite, and this equality
can take the form $+\infty=+\infty$. As an example it is enough to
take $f(x)=e^{-\left|x\right|^{2}/2}$, so $S_{f}=f\dd x$, and the
function $g\in\lc_{n}$ which satisfies $h_{g}(x)=e^{\left|x\right|^{2}}$. 

Theorem \ref{thm:first-variation} is proved in Section \ref{sec:variation-formula},
after some preliminary technicalities are proved in Section \ref{sec:first-variation}.
Other than a general desire to state theorems under the minimal and
most elegant conditions, we believe that our proof also explains in
a very transparent way exactly \emph{why} essential continuity is
the natural condition here. The proof also hints about possible extensions
of the theorem to the non essentially continuous case.

\section{\label{sec:first-variation}First variation of the Legendre transform}

This section is fairly short and technical, and is dedicated to a
proof of the following result:
\begin{prop}
\label{prop:pointwise-der}Let $\psi,\alpha:\RR^{n}\to(-\infty,\infty]$
be lower semi-continuous functions. Assume that $\alpha$ is bounded
from below and that $\alpha(0),\psi(0)<\infty$. Write $\phi=\psi^{\ast}$
. Then at every point $x_{0}\in\RR^{n}$ where $\phi$ is differentiable
we have 

\begin{equation}
\left.\frac{\dd}{\dd t}\right|_{t=0^{+}}\left(\psi+t\alpha\right)^{\ast}(x_{0})=-\alpha\left(\nabla\phi(x_{0})\right).\label{eq:legendre-first-variation}
\end{equation}
\end{prop}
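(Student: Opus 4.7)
The plan is to exploit two facts: that $t\mapsto\phi_t(x_0):=(\psi+t\alpha)^{\ast}(x_{0})=\sup_{y}\bigl[\langle x_{0},y\rangle-\psi(y)-t\alpha(y)\bigr]$ is a supremum of affine functions of $t$, hence convex in $t$, so its right derivative at $0$ exists in $[-\infty,\infty]$; and that differentiability of $\phi=\psi^{\ast}$ at $x_{0}$ forces $\partial\phi(x_{0})=\{y_{0}\}$ with $y_{0}:=\nabla\phi(x_{0})$ and, in particular, $x_{0}\in\operatorname{int}\dom\phi$.

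The main obstacle is to prove that the supremum $\phi(x_{0})=\sup_{y}g_{0}(y)$, where $g_{0}(y):=\langle x_{0},y\rangle-\psi(y)$, is \emph{attained} at $y_{0}$. I would take any maximizing sequence $y_{k}$ and argue in two steps. First, boundedness: if, after passing to a subsequence, $|y_{k}|\to\infty$ and $y_{k}/|y_{k}|\to v$, then $\phi(x_{0}+\epsilon v)\ge g_{0}(y_{k})+\epsilon\langle v,y_{k}\rangle\to+\infty$ for every $\epsilon>0$, contradicting $x_{0}\in\operatorname{int}\dom\phi$. Second, cluster points: by lower semi-continuity of $\psi$, any subsequential limit $y^{\ast}$ satisfies $g_{0}(y^{\ast})\ge\limsup g_{0}(y_{k})=\phi(x_{0})$, so $y^{\ast}$ is itself a maximizer, and then the Fenchel equality $\psi(y^{\ast})+\phi(x_{0})=\langle x_{0},y^{\ast}\rangle$ places $y^{\ast}\in\partial\phi(x_{0})=\{y_{0}\}$. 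Hence $y_{k}\to y_{0}$, and one more application of lsc of $\psi$ at $y_{0}$ gives $\psi(y_{0})=\langle x_{0},y_{0}\rangle-\phi(x_{0})$, i.e.\ attainment at $y_{0}$.

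With attainment in hand, the lower bound on the right derivative is immediate by substituting $y=y_{0}$: $\phi_{t}(x_{0})\ge\phi_{0}(x_{0})-t\alpha(y_{0})$. For the upper bound, pick for each small $t>0$ a near-maximizer $y_{t}$ with $\phi_{t}(x_{0})\le g_{0}(y_{t})-t\alpha(y_{t})+t^{2}$. When $\alpha(y_{0})<\infty$ I would rerun the convergence argument of the previous paragraph (using the lower bound on $\alpha$ to show $g_{0}(y_{t})\to\phi_{0}(x_{0})$) to obtain $y_{t}\to y_{0}$ as $t\to0^{+}$; combined with $g_{0}(y_{t})\le\phi_{0}(x_{0})$ and lsc of $\alpha$, this yields $\limsup\frac{\phi_{t}(x_{0})-\phi_{0}(x_{0})}{t}\le-\alpha(y_{0})$, matching the lower bound. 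When $\alpha(y_{0})=+\infty$ the convergence $y_{t}\to y_{0}$ can fail, so I would argue separately: by lsc of $\alpha$ the set $\{\alpha\le M\}$ is closed and avoids $y_{0}$, hence $\alpha>M$ on some ball $B(y_{0},\delta_{M})$; on the complement of this ball uniqueness of the maximizer gives $g_{0}\le\phi_{0}(x_{0})-\eta_{M}$ for some $\eta_{M}>0$. Splitting the supremum in $\phi_{t}(x_{0})$ between these two regions gives $\frac{\phi_{t}-\phi_{0}}{t}\le-M$ for arbitrary $M$, so the derivative is $-\infty=-\alpha(y_{0})$.
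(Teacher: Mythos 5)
Your proof is correct and follows the same overall strategy as the paper's: show the supremum defining $\phi(x_{0})=\psi^{\ast}(x_{0})$ is attained at the unique point $y_{0}=\nabla\phi(x_{0})$, get the lower bound on the difference quotient by substituting $y_{0}$, and get the upper bound by showing that (near-)maximizers $y_{t}$ for small $t>0$ converge to $y_{0}$ and invoking lower semi-continuity of $\alpha$. The differences are in the details, and two of them are worth noting. First, you identify the maximizer via the Fenchel equality and $\partial\phi(x_{0})=\{y_{0}\}$, and you work with $t^{2}$-near-maximizers rather than proving attainment of $\sup_{y}\left(\left\langle x_{0},y\right\rangle -\psi(y)-t\alpha(y)\right)$ for $t>0$; both are harmless substitutes for the paper's direct arguments. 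Second, and more substantively, you treat the case $\alpha(y_{0})=+\infty$ separately with a splitting of the supremum between a small ball around $y_{0}$ (where $\alpha>M$) and its complement (where $g_{0}$ is bounded away from $\phi(x_{0})$ by upper semi-continuity and coercivity). This case split is a genuine refinement: the comparison $G_{t}(y_{t})\ge G_{t}(y_{0})=G_{0}(y_{0})-t\alpha(y_{0})$, which the paper uses to force $y_{t}\to y_{0}$, degenerates to $-\infty$ when $\alpha(y_{0})=+\infty$, whereas your argument yields the correct conclusion (derivative $=-\infty$) directly. The one place you are terse is the claim that $g_{0}\le\phi_{0}(x_{0})-\eta_{M}$ off the ball $B(y_{0},\delta_{M})$: uniqueness of the maximizer alone gives only a pointwise strict inequality, and you should say that the uniform gap follows because $g_{0}$ is upper semi-continuous and tends to $-\infty$ at infinity, so its supremum over the closed complement is attained (or is $-\infty$) and hence is strictly below $\phi_{0}(x_{0})$. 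All the needed ingredients are already established in your first two paragraphs, so this is a presentational gap rather than a mathematical one.
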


Conceptually, Proposition \ref{prop:pointwise-der} is well-known.
For example, it is similar to Lemma 4.11 of \cite{Colesanti2013}.
In fact, if $\left\{ \psi_{t}\right\} $ is any family of convex functions,
then it is well-known that under sufficient regularity assumptions
we have 
\begin{equation}
\left.\frac{\dd}{\dd t}\right|_{t=0}\psi_{t}^{\ast}(x_{0})=-\left.\frac{\dd}{\dd t}\right|_{t=0}\psi_{t}\left(\nabla\psi_{0}^{\ast}(x_{0})\right)\label{eq:legendre-variation-general}
\end{equation}
 (This result is folklore, but see e.g. Proposition 5.1 of \cite{Artstein-Avidan2017}
for one rigorous formulation). The thorny issue here is the words
``sufficient regularity assumptions'': for the proof of Theorem
\ref{thm:first-variation} we will need Proposition \ref{prop:pointwise-der}
as stated, with no extra smoothness or boundness assumptions. In fact,
we do not even assume that $\psi$ and $\alpha$ are convex \textendash{}
The Legendre transform of any function $\phi:\RR^{n}\to(-\infty,\infty]$,
convex or not, can be defined using formula \eqref{eq:legendre-def}.
This will not be important for the proof of Theorem \ref{thm:first-variation},
but will be useful in the second half of this paper.

As we were unable to find Proposition \ref{prop:pointwise-der} in
the literature with our minimal assumptions, we give a full proof
in this section. We do mention that Lemma 2.7 of \cite{Berman2013}
is fairly close to our proposition, and the proofs will have some
similarities as well. We begin with a lemma:
\begin{lem}
\label{lem:unique-maximizer}Let $\psi:\RR^{n}\to(-\infty,\infty]$
be a lower semi-continuous function and let $\phi=\psi^{\ast}$ .
Assume that for some fixed $x_{0}\in\RR^{n}$ the function $\phi$
is differentiable at $x_{0}$. Then:
\begin{enumerate}
\item $\lim_{\left|y\right|\to\infty}\left(\left\langle y,x_{0}\right\rangle -\psi(y)\right)=-\infty$.
\item The supremum in the definition of $\psi^{\ast}(x_{0})=\phi(x_{0})$
is attained at the unique point $y_{0}=\nabla\phi(x_{0})$.
\end{enumerate}
\end{lem}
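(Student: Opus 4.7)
The plan is to exploit the fact that $\phi = \psi^{\ast}$ is \emph{always} a convex function (being a supremum of affine functions, irrespective of whether $\psi$ is convex) and that, by differentiability at $x_{0}$, $\phi$ must be finite on some open ball $B(x_{0},r)$. Standard local boundedness of finite convex functions then yields a constant $M$ and a radius $r' < r$ such that $\phi \le M$ on the closed ball $\overline{B(x_{0},r')}$. This local upper bound on $\phi$ is the engine behind both parts of the lemma.

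For part (1), I would simply plug the test point $x = x_{0} + r' y / |y|$ (for $y \ne 0$) into the defining inequality $\langle y, x \rangle - \psi(y) \le \phi(x)$. Since $x \in \overline{B(x_{0}, r')}$ this gives
\[
\langle y, x_{0} \rangle - \psi(y) \le \phi(x) - r'|y| \le M - r'|y|,
\]
and the right-hand side tends to $-\infty$ as $|y| \to \infty$.

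For part (2), existence of a maximizer falls out of part (1): the hypothesis $\psi(0) < \infty$ ensures $\phi(x_{0}) \ge -\psi(0) > -\infty$, so the supremum is a real number; part (1) implies any maximizing sequence is bounded; and lower semi-continuity of $\psi$ (equivalently, upper semi-continuity of $y \mapsto \langle y, x_{0} \rangle - \psi(y)$) lets me pass to a convergent subsequence and conclude that the supremum is attained. Uniqueness is the real content: if $y^{\ast}$ attains the maximum, then for every $x \in \RR^{n}$,
\[
\phi(x) \ge \langle y^{\ast}, x \rangle - \psi(y^{\ast}) = \phi(x_{0}) + \langle y^{\ast}, x - x_{0} \rangle,
\]
so $y^{\ast}$ lies in the subdifferential $\partial \phi(x_{0})$ of the convex function $\phi$. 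Since $\phi$ is differentiable at $x_{0}$, its subdifferential there is the singleton $\{\nabla \phi(x_{0})\}$, forcing $y^{\ast} = \nabla \phi(x_{0})$.

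I expect no serious obstacle here; the only mild subtlety is that $\psi$ is not assumed convex, so I must be careful to avoid any argument that silently uses $\psi^{\ast\ast} = \psi$ or a related duality. Since the entire argument hinges only on convexity of $\phi$ (which is automatic) and on its differentiability at $x_{0}$, this subtlety does not actually bite.
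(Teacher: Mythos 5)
Your argument is correct and follows essentially the same route as the paper: the local upper bound on the convex function $\phi$ near $x_{0}$ gives (1), and the maximizer is identified as a subgradient of $\phi$ at $x_{0}$, hence equal to $\nabla\phi(x_{0})$ (the paper simply proves the singleton-subdifferential fact inline via directional derivatives rather than citing it). The only cosmetic slip is your appeal to ``the hypothesis $\psi(0)<\infty$,'' which is a hypothesis of Proposition \ref{prop:pointwise-der} but not of this lemma; the finiteness of $\phi(x_{0})$ that you need is already implicit in the assumption that $\phi$ is differentiable at $x_{0}$.
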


\begin{proof}
For $(1)$ we do not need that $\phi$ is differentiable at $x_{0}$,
but only that it is finite in a neighborhood of $x_{0}$. Since $\phi$
is convex, it follows that there exists $\epsilon>0$ and $M>0$ such
that $\phi\le M$ on $\overline{B}(x_{0},\epsilon)$. 

For any $0\ne y\in\RR^{n}$ we have 
\[
M\ge\phi\left(x_{0}+\epsilon\frac{y}{\left|y\right|}\right)\ge\left\langle y,x_{0}+\epsilon\frac{y}{\left|y\right|}\right\rangle -\psi(y)=\left(\left\langle y,x_{0}\right\rangle -\psi(y)\right)+\epsilon\left|y\right|.
\]
 Hence 
\[
\left\langle y,x_{0}\right\rangle -\psi(y)\le M-\epsilon\left|y\right|\xrightarrow{\left|y\right|\to\infty}-\infty,
\]
 so $(1)$ is proved. 

Now we prove $(2)$. Since the function 
\[
y\mapsto\left\langle y,x_{0}\right\rangle -\psi(y)
\]
 is upper semi-continuous and tends to $-\infty$ as $\left|y\right|\to\infty$,
it must attain a maximum at some point $y_{0}$. We will show that
necessarily $y_{0}=\nabla\phi(x_{0})$, which will also imply that
the maximizer $y_{0}$ is unique. 

Indeed, for every $v\in\RR^{n}$ and every small $t>0$ we have 
\begin{align*}
\phi(x_{0}+tv) & \ge\left\langle y_{0},x_{0}+tv\right\rangle -\psi(y_{0})=\left\langle y_{0},x_{0}+tv\right\rangle -\left(\left\langle y_{0},x_{0}\right\rangle -\phi(x_{0})\right)\\
 & =\phi(x_{0})+t\left\langle y_{0},v\right\rangle .
\end{align*}
 Hence 
\[
\left\langle y_{0},v\right\rangle \le\frac{\phi(x_{0}+tv)-\phi(x_{0})}{t}\xrightarrow{t\to0^{+}}\left\langle \nabla\phi(x_{0}),v\right\rangle .
\]
 By replacing $v$ with $-v$ we have $\left\langle y_{0},v\right\rangle =\left\langle \nabla\phi(x_{0}),v\right\rangle $
for all $v\in\RR^{n}$, so indeed $y_{0}=\nabla\phi(x_{0})$ and $(2)$
is proved.
\end{proof}
We can now prove Proposition \ref{prop:pointwise-der}:
\begin{proof}[Proof of Proposition \ref{prop:pointwise-der}]
Choose $M>0$ such that $\alpha\ge-M$. 

By definition we have 
\[
\phi_{t}(x_{0})=\sup_{y\in\RR^{n}}\left(\left\langle x_{0},y\right\rangle -\psi_{t}(y)\right)=\sup_{y\in\RR^{n}}\underbrace{\left(\left\langle x_{0},y\right\rangle -\psi(y)-t\alpha(y)\right)}_{=:G_{t}(y)}.
\]

According to Lemma \ref{lem:unique-maximizer}$(1)$ we know that
$\lim_{\left|y\right|\to\infty}G_{0}(y)=-\infty.$ Since $G_{t}\le G_{0}+tM$
we also have $\lim_{\left|y\right|\to\infty}G_{t}(y)=-\infty$. Since
$\psi$ and $\alpha$ are lower semi-continuous it follows that the
$\sup_{y}G_{t}(y)$ is attained at some point $y_{t}$. We claim that
the set $\left\{ y_{t}\right\} _{0\le t\le1}$ is bounded. Indeed,
for $0\le t\le1$ we have
\begin{align*}
G_{0}(y_{t}) & \ge G_{t}(y_{t})-tM=\sup_{y\in\RR^{n}}G_{t}(y)-tM\\
 & \ge\sup_{y\in\RR^{n}}\left(\left\langle x_{0},y\right\rangle -\psi(y)-\alpha(y)-M\right)\ge-\psi(0)-\alpha(0)-M>-\infty,
\end{align*}
Since $\lim_{\left|y\right|\to\infty}G_{0}(y)=-\infty,$ $\left\{ y_{t}\right\} _{0\le t\le1}$
is indeed bounded. For $t=0$ we know from Lemma \ref{lem:unique-maximizer}$(2)$
that $G_{0}(y)$ is maximized at the \emph{unique }point $y_{0}=\nabla\phi_{0}(x_{0})$.

On the one hand, we have 
\[
\phi_{t}(x_{0})\ge G_{t}(y_{0})=G_{0}(y_{0})-t\alpha(y_{0})=\phi_{0}(x_{0})-t\alpha(y_{0}),
\]
so we have the bound
\[
\liminf_{t\to0^{+}}\frac{\phi_{t}(x_{0})-\phi_{0}(x_{0})}{t}\ge-\alpha(y_{0}).
\]

On the other hand, we have 
\[
\phi_{0}(x_{0})\ge G_{0}(y_{t})=G_{t}(y_{t})+t\alpha(y_{t})=\phi_{t}(x_{0})+t\alpha(y_{t})
\]
 so 
\[
\limsup_{t\to0^{+}}\frac{\phi_{t}(x_{0})-\phi_{0}(x_{0})}{t}\le-\liminf_{t\to0^{+}}\alpha(y_{t}).
\]
 Therefore, to finish the proof it is enough to show that $\liminf_{t\to0^{+}}\alpha(y_{t})\ge\alpha(y_{0})$.
Since $\alpha$ is lower semi-continuous, it is enough to show that
$y_{t}\xrightarrow{t\to0^{+}}y_{0}$. Assume by contradiction this
is not the case. Since $\left\{ y_{t}\right\} _{0\le t\le1}$ is bounded
we can find a converging sequence $t_{i}\to0$ such that $y_{t_{i}}\to y^{\ast}\ne y$.
Since $G_{0}$ is upper semi-continuous it follows that $G_{0}(y^{\ast})\ge\limsup_{i\to\infty}G_{0}(y_{t_{i}})$.
But since $y_{t_{i}}$ maximizes $G_{t_{i}}$ we have
\[
G_{0}(y_{t_{i}})+t_{i}M\ge G_{t_{i}}(y_{t_{i}})\ge G_{t_{i}}(y_{0})=G_{0}(y_{0})-t_{i}\alpha(y_{0}).
\]
 Since $t_{i}\to0$ as $i\to\infty$ it follows that
\[
G_{0}\left(y^{\ast}\right)\ge\limsup_{i\to\infty}G_{0}(y_{t_{i}})\ge G_{0}(y_{0}),
\]
 contradicting the fact that $y_{0}$ is the \emph{unique} maximizer
of $G_{0}$. Hence $y_{t}\to y_{0}$ and \eqref{eq:legendre-first-variation}
is proved. 
\end{proof}

\section{\label{sec:variation-formula}Essentially continuity and the variation
Formula}

We now begin our proof of Theorem \ref{thm:first-variation}. The
following fact about essentially continuous log-concave functions
was proved in \cite{Cordero-Erausquin2015}:
\begin{prop}
\label{prop:int-grad}For every $f\in\lc_{n}$ with $0<\int f<\infty$
we have $\int\left|\nabla f\right|<\infty$. If $f$ is essentially
continuous then $\int\nabla f=0$. 
\end{prop}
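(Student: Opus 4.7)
The plan is to write $f = e^{-\phi}$ with $\phi \in \cvx_n$ and reduce both assertions to one-dimensional slices.

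For the first assertion, I fix $v \in \SS^{n-1}$ and apply Fubini:
\[
\int_{\RR^n} |\partial_v f(x)|\,\dd x \;=\; \int_{v^{\perp}} \int_{\RR} |g_y'(t)|\,\dd t\,\dd y, \qquad g_y(t) := f(tv + y).
\]
Each $g_y$ is an integrable log-concave function on $\RR$, hence unimodal with maximum $M(y) := \sup_t g_y(t)$. On the open interval $(a_y, b_y) := \{g_y > 0\}$ the function $-\log g_y$ is finite and convex, hence locally Lipschitz, so $g_y$ itself is locally Lipschitz there; splitting at a maximum point and applying the fundamental theorem of calculus on compact subintervals of $(a_y, b_y)$, then exhausting $(a_y, b_y)$ by monotone convergence, gives the one-variable bound $\int_{\RR} |g_y'(t)|\,\dd t \le 2 M(y)$. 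I would then invoke the standard fact that integrable log-concave functions have exponential tails: there exist $a, b > 0$ with $f(x) \le a\,e^{-b|x|}$. Since $v \perp y$ implies $|tv + y| \ge |y|$, this forces $M(y) \le a\,e^{-b|y|}$, which is integrable on $v^\perp$.

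For the second assertion, I again fix $v$ and aim to prove $\int_{\RR} g_y'(t)\,\dd t = 0$ for Lebesgue-almost every $y \in v^\perp$; Fubini then delivers $\int \partial_v f\,\dd x = 0$. The key point is that the discontinuity set $D_f := \{x : f \text{ is discontinuous at } x\}$ is an $F_\sigma$ Borel set with $\HH^{n-1}(D_f) = 0$ by essential continuity. The orthogonal projection $\pi : \RR^n \to v^{\perp}$ is $1$-Lipschitz and therefore does not increase Hausdorff measure, so $\HH^{n-1}(\pi(D_f)) = 0$; since $\HH^{n-1}$ coincides with Lebesgue measure on the hyperplane $v^{\perp}$, we conclude $\mathcal{L}^{n-1}(\pi(D_f)) = 0$. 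For any $y \notin \pi(D_f)$, the entire line $y + \RR v$ avoids $D_f$, so $g_y$ is continuous on all of $\RR$. For such a $g_y$, continuity forces $g_y(a_y) = g_y(b_y) = 0$ (with the obvious convention if $a_y$ or $b_y$ is infinite), and the same FTC-with-limit argument as above, now \emph{without} the absolute value, yields
\[
\int_{\RR} g_y'(t)\,\dd t \;=\; g_y(b_y) - g_y(a_y) \;=\; 0.
\]

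The main obstacle is the projection step: it rests on the facts that a $1$-Lipschitz map preserves $\HH^{n-1}$-null sets and that $\HH^{n-1}$ agrees with Lebesgue measure on an $(n-1)$-dimensional affine subspace. Once these are granted, the telescoping identity $\int_\alpha^\beta g_y'(t)\,\dd t = g_y(\beta) - g_y(\alpha)$ on compact $[\alpha, \beta] \subset (a_y, b_y)$ passes to the limit by continuity of $g_y$ at the endpoints, and the argument concludes cleanly.
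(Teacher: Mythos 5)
Your argument is correct; note that the paper itself states this proposition without proof, citing \cite{Cordero-Erausquin2015}, and your slicing strategy (Fubini over lines in a fixed direction, the bound $\int|g_y'|\le 2\sup g_y$ for unimodal one-dimensional restrictions, and the observation that a $1$-Lipschitz projection sends the $\HH^{n-1}$-null discontinuity set to a Lebesgue-null subset of $v^{\perp}$ so that almost every restriction is continuous and vanishes at the endpoints of its support) is essentially the proof given in that reference. The only points worth writing out in full are the exponential tail bound used both for integrability of $\sup_t g_y(t)$ over $v^{\perp}$ and for the decay of $g_y$ at infinite endpoints, and the passage from $\int\partial_v f$ for all $v$ to the vector identity $\int\nabla f=0$.
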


The following result is the main place essential continuity is used
in our proof. It may be of independent interest:
\begin{thm}
\label{thm:coarea}Fix $f\in\lc_{n}$ with $0<\int f<\infty$ and
let $K=\overline{\left\{ x:\ f(x)\ne0\right\} }$ denote its support.
Then 
\[
\int_{0}^{\infty}\mathcal{H}^{n-1}\left(\left\{ x:\ f(x)=t\right\} \right)\dd t=\int_{\RR^{n}}\left|\nabla f\right|\dd x+\int_{\partial K}f\dd\HH^{n-1}.
\]
 In particular, $f$ is essentially continuous if and only if we have
the classic co-area formula 
\begin{equation}
\int_{0}^{\infty}\mathcal{H}^{n-1}\left(\left\{ x:\ f(x)=t\right\} \right)\dd t=\int_{\RR^{n}}\left|\nabla f\right|\dd x.\label{eq:lc-coarea}
\end{equation}
 
\end{thm}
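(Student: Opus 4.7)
The plan is to interpret the desired identity as the co-area formula applied to $f$ viewed as a function of bounded variation. First I would identify the distributional gradient of $f$: its absolutely continuous part equals $\nabla f\, \dd x$, and its singular part is concentrated on $\partial K$ with total mass $\int_{\partial K} f\, \dd\HH^{n-1}$. Concretely, since $\phi = -\log f$ is convex and finite on $K^\circ$, $f$ is locally Lipschitz there and vanishes identically off $K$, so for any compactly supported smooth test vector field $\psi$ Gauss's divergence theorem on $K$ (after truncation by a large ball if $K$ is unbounded, using that $f\to 0$ at infinity because $\int f<\infty$) gives
\[
-\int_{\RR^n} f\, \div\psi \, \dd x \;=\; \int_{K^\circ} \nabla f \cdot \psi\, \dd x \;-\; \int_{\partial K} f\, \psi\cdot \nu_K\, \dd\HH^{n-1},
\]
where $\nu_K$ is the outer unit normal. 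Taking total variation yields
\[
|Df|(\RR^n) \;=\; \int_{\RR^n} |\nabla f|\, \dd x \;+\; \int_{\partial K} f\, \dd\HH^{n-1},
\]
which is the right-hand side of the theorem. (If the boundary term is $+\infty$, I would argue that both sides are $+\infty$ by truncating $f$ to $f\wedge N$ and letting $N\to\infty$.)

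Next, I invoke the co-area formula for BV functions, which together with $f\ge 0$ gives
\[
|Df|(\RR^n) \;=\; \int_0^\infty P(\{f>t\})\, \dd t,
\]
where $P$ denotes the perimeter. For $0<t<\max f$, the superlevel set $\{f>t\}=\{\phi<-\log t\}$ is an open bounded convex set --- bounded because $|\{f>t\}|\le (\int f)/t$ --- so its perimeter equals the Hausdorff measure of its topological boundary. I would then show $\partial\{f>t\}=\{f=t\}$ using two complementary observations: the inclusion $\subseteq$ follows from upper semi-continuity (a boundary point $x$ lies outside the open superlevel set, so $f(x)\le t$, yet is a limit of points $y_n$ with $f(y_n)>t$, so USC forces $f(x)\ge t$); the reverse inclusion uses log-concavity (if $f(x)=t$ and $y$ is any point with $f(y)>t$, then $f((1-\lambda)x+\lambda y)\ge t^{1-\lambda}f(y)^\lambda>t$ for $\lambda\in(0,1]$, so $x\in\overline{\{f>t\}}$). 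Combining the three displays yields the main identity; the ``in particular'' clause follows at once, since using $f\ge 0$ the condition $\int_{\partial K} f\, \dd\HH^{n-1}=0$ is equivalent to $f\equiv 0$ $\HH^{n-1}$-a.e.\ on $\partial K$, which is precisely the definition of essential continuity.

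The main technical obstacle lies in the first step, namely justifying that the singular part of $Df$ equals $-f\nu_K\,\dd\HH^{n-1}\!\!\restriction_{\partial K}$ (equivalently, that the Gauss--Green identity above is valid). The content is that the $BV$-trace of $f$ from $K^\circ$ onto $\partial K$ agrees $\HH^{n-1}$-almost everywhere with the pointwise USC value $f|_{\partial K}$. This reduces to the classical convex-analytic fact that at $\HH^{n-1}$-a.e.\ boundary point $x_0\in\partial K$, $\phi$ has a limit from within $K^\circ$ equal to $\phi(x_0)$, which combined with the local Lipschitz structure of $\partial K$ delivers the required trace identification.
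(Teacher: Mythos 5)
Your overall strategy coincides with the paper's: show that $f$ has locally bounded variation with $\dd\left\Vert Df\right\Vert=\left|\nabla f\right|\dd x+f\cdot\left.\dd\HH^{n-1}\right|_{\partial K}$, then apply the co-area formula for BV functions. On one point you are actually more careful than the paper: the BV co-area formula produces $\int_{0}^{\infty}P\left(\left\{ f>t\right\} \right)\dd t$ with $P$ the perimeter, and your verification that $P\left(\left\{ f>t\right\} \right)=\HH^{n-1}\left(\left\{ f=t\right\} \right)$ \textemdash{} boundedness and convexity of the superlevel set together with the identification $\partial\left\{ f>t\right\} =\left\{ f=t\right\} $ via upper semi-continuity and log-concavity \textemdash{} is a step the paper leaves implicit.

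The gap is exactly where you locate it, and the ``classical convex-analytic fact'' you invoke to close it is not one. You need the Gauss\textendash Green identity on $K$ with boundary datum the pointwise value $f|_{\partial K}$, and you propose to deduce it from the claim that at $\HH^{n-1}$-a.e.\ $x_{0}\in\partial K$ the function $\phi$ has a limit from within $K^{\circ}$ equal to $\phi(x_{0})$. The unrestricted interior limit can genuinely fail at boundary points: for the lower semi-continuous convex function $\phi(x,y)=x^{2}/y$ (extended by $\phi(0,0)=0$), one has $\phi\to0$ along $y=\left|x\right|$ but $\phi\equiv1$ along $y=x^{2}$, so no limit exists at $(0,0)$. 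What holds at \emph{every} boundary point, off the shelf, is only the radial statement $\lim_{\lambda\to1^{-}}\phi(\lambda x)=\phi(x)$ (after translating $0$ into the interior of $K$), and upgrading that to the $\HH^{n-1}$-a.e.\ trace identification is the entire content of the step, not a citation. The paper's device sidesteps traces altogether: apply the ordinary divergence theorem to $f\Phi$ on $\lambda K\cap B$ for $\lambda<1$, where $\overline{\lambda K\cap B}$ is a compact subset of the interior of $K$ and $f$ is honestly Lipschitz; rewrite the resulting boundary integral as $\lambda^{n-1}\int_{\partial K}f(\lambda x)\left\langle \Phi(\lambda x),n_{K}(x)\right\rangle \dd\HH^{n-1}(x)$; and let $\lambda\to1^{-}$ using radial continuity and dominated convergence. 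Replacing your trace argument with this dilation argument completes your proof; as written, the decisive step is asserted rather than proved.
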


\begin{proof}
We will use the co-area formula for BV functions \textendash{} see
e.g. \cite{Evans1992} for the statement and the necessary definitions.
By translating $f$ we may assume without loss of generality that
$0$ is in the interior of $K$. Let $\Phi:\RR^{n}\to\RR^{n}$ be
a $C^{1}$ vector field with compact support. Choose a ball $B$ such
that $\textrm{support}(\Phi)\subseteq B$. For every $\lambda<1$
the set $\lambda K\cap B$ is convex, hence a Lipschitz domain. Since
convex functions are locally Lipschitz on the interior of their support,
it follows that $f$ is Lipschitz on $\lambda K\cap B$. Hence $f\Phi$
is also Lipschitz and we may apply the divergence theorem:
\begin{align*}
\int_{\partial(\lambda K\cap B)}\left\langle f\Phi,n_{\lambda K\cap B}\right\rangle \dd\HH^{n-1} & =\int_{\lambda K\cap B}\div\left(f\Phi\right)=\int_{\lambda K}\div\left(f\Phi\right)\\
 & =\int_{\lambda K}\left\langle \nabla f,\Phi\right\rangle +\int_{\lambda K}f\div\Phi.
\end{align*}
 Here of course $n_{\lambda K\cap B}$ denotes the outer unit normal
to the set $\lambda K\cap B$, which exists $\HH^{n-1}$-almost everywhere. 

Since $\Phi\equiv0$ on $\partial B$ we also have 
\begin{align*}
\int_{\partial(\lambda K\cap B)}\left\langle f\Phi,n_{\lambda K\cap B}\right\rangle \dd\HH^{n-1} & =\int_{\partial\left(\lambda K\right)}f(y)\left\langle \Phi(y),n_{\lambda K}(y)\right\rangle \dd\HH^{n-1}(y)\\
 & =\lambda^{n-1}\int_{\partial K}f(\lambda x)\left\langle \Phi(\lambda x),n_{\lambda K}(\lambda x)\right\rangle \dd\HH^{n-1}(x)\\
 & =\lambda^{n-1}\int_{\partial K}f(\lambda x)\left\langle \Phi(\lambda x),n_{K}(x)\right\rangle \dd\HH^{n-1}(x).
\end{align*}
 Letting $\lambda\to1^{-}$ and using the dominated convergence theorem
we obtain 
\[
\int_{\partial K}f\left\langle \Phi,n_{K}\right\rangle \dd\HH^{n-1}=\int_{K}\left\langle \nabla f,\Phi\right\rangle +\int_{K}f\div\Phi,
\]
 and since $f$ is supported on $K$ we may also write 
\[
\int_{\RR^{n}}f\div\Phi=-\int_{\RR^{n}}\left\langle \nabla f,\Phi\right\rangle +\int_{\partial K}f\left\langle \Phi,n_{K}\right\rangle \dd\HH^{n-1}.
\]
 By definition, this means that $f$ is a function of locally bounded
variation, and its variation measure $\left\Vert Df\right\Vert $
satisfies
\begin{equation}
\dd\left\Vert Df\right\Vert =\left|\nabla f\right|\dd x+f\cdot\left.\dd\HH^{n-1}\right|_{\partial K}\label{eq:variation-measure}
\end{equation}
 (see Section 5.1 of \cite{Evans1992}). In particular, we may apply
the co-area formula (Section 5.5 of \cite{Evans1992}) and conclude
that 
\[
\int_{0}^{\infty}\mathcal{H}^{n-1}\left(\left\{ x:\ f(x)=t\right\} \right)\dd t=\left\Vert Df\right\Vert \left(\RR^{n}\right)=\int\left|\nabla f\right|\dd x+\int_{\partial K}f\dd\HH^{n-1},
\]

which is what we wanted to prove. 

Finally, for the ``in particular'' part of the theorem, we see from
the last equation that \eqref{eq:lc-coarea} holds if and only if
$\int_{\partial K}f\dd\HH^{n-1}=0$. This holds if and only if $f\equiv0$
$\HH^{n-1}$-a.e. on $\partial K$, which exactly means that $f$
is essentially continuous. 
\end{proof}
\begin{rem}
Equation \eqref{eq:variation-measure} actually shows that $f$ is
essentially continuous if and only if its variation measure $\left\Vert Df\right\Vert $
is absolutely continuous with respect to the Lebesgue measure. This
is equivalent to $f$ belonging to the Sobolev space $W_{loc}^{1,1}\left(\RR^{n}\right)$
\textendash{} see again Section 5.1 of \cite{Evans1992}. By \eqref{eq:variation-measure}
and Proposition \ref{prop:int-grad} we know that in this case
\[
\left\Vert Df\right\Vert \left(\RR^{n}\right)=\int\left|\nabla f\right|<\infty,
\]
 so we actually obtain the following characterization: a function
$f\in\lc_{n}$ with $0<\int f<\infty$ is essentially continuous if
and only if $f\in W^{1,1}\left(\RR^{n}\right)$. We will not need
this characterization in this paper. 
\end{rem}

We can already prove the ``moreover'' part of Theorem \ref{thm:first-variation}.
In fact we will show something slightly more general:
\begin{prop}
\label{prop:first-variation-ball}Fix $f\in\lc_{n}$ with $0<\int f<\infty$.
Write $g=\lambda\cdot\oo_{mB_{2}^{n}}$ for some $\lambda,m>0$. Then
\begin{equation}
\delta(f,g)=\int h_{g}\dd S_{f}\label{eq:variation-ball}
\end{equation}
 if and only if $f$ is essentially continuous. 
\end{prop}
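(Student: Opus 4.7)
The plan is to reduce $\delta(f,g)$ to a perimeter integral over the level sets of $f$ and then apply Theorem~\ref{thm:coarea}. First, because $\oo_{mB_{2}^{n}}$ takes only the values $0$ and $1$, the functional dilation collapses into an ordinary indicator: $g=\lambda\cdot\oo_{mB_{2}^{n}}=\oo_{RB_{2}^{n}}$ with $R:=\lambda m$. In particular $h_{g}(y)=R|y|$ and $(t\cdot g)=\oo_{tRB_{2}^{n}}$, so that
\[
\left(f\star(t\cdot g)\right)(x)=\sup_{|z-x|\le tR}f(z)=:M_{tR}f(x)
\]
is the maximum of $f$ over the closed ball of radius $tR$ around $x$. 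After the substitution $s=tR$, the goal becomes to evaluate
\[
\delta(f,g)=R\cdot\lim_{s\to 0^{+}}\frac{\int M_{s}f-\int f}{s}.
\]

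Next, by the layer-cake formula and the identity $\{M_{s}f>u\}=\{f>u\}+sB_{2}^{n}$,
\[
\frac{\int M_{s}f-\int f}{s}=\int_{0}^{\infty}\frac{|\{f>u\}+sB_{2}^{n}|-|\{f>u\}|}{s}\dd u.
\]
For every $u>0$ the set $K_{u}:=\{f\ge u\}=\overline{\{f>u\}}$ is a convex body, bounded because $\int f<\infty$ forces $f\to 0$ at infinity. By the Steiner formula, $|K_{u}+sB_{2}^{n}|$ is a polynomial in $s\ge 0$, so the inner difference quotient decreases monotonically to $\HH^{n-1}(\partial K_{u})$ as $s\to 0^{+}$. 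Interchanging limit and integral — the main technical step, which I would justify by dominated convergence using that $\int M_{s_{0}}f<\infty$ for some $s_{0}>0$, a standard estimate for the sup-convolution of a log-concave integrable function — yields
\[
\lim_{s\to 0^{+}}\frac{\int M_{s}f-\int f}{s}=\int_{0}^{\infty}\HH^{n-1}(\partial K_{u})\dd u.
\]

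The right-hand side is exactly the quantity computed inside the proof of Theorem~\ref{thm:coarea}: combining the BV co-area formula with the expression for the variation measure $\Vert Df\Vert$ derived there, it equals $\int|\nabla f|\dd x+\int_{\partial K}f\dd\HH^{n-1}$, where $K$ denotes the support of $f$. Meanwhile, by Definition~\ref{def:func-surface-area} and the pointwise identity $\nabla f=-f\nabla\phi$,
\[
\int h_{g}\dd S_{f}=R\int|y|\dd S_{f}(y)=R\int|\nabla\phi(x)|f(x)\dd x=R\int|\nabla f|\dd x.
\]
Therefore
\[
\delta(f,g)-\int h_{g}\dd S_{f}=R\int_{\partial K}f\dd\HH^{n-1},
\]
which vanishes exactly when $f\equiv 0$ $\HH^{n-1}$-a.e.\ on $\partial K$, that is, exactly when $f$ is essentially continuous. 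The borderline case $\int M_{s}f=+\infty$ (possible only for unbounded $\operatorname{supp}(f)$) presents no issue, since both $\delta(f,g)$ and $\int_{\partial K}f\dd\HH^{n-1}$ are then infinite and the claimed equivalence still holds consistently.
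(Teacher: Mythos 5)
Your argument is essentially the paper's own proof: reduce to a sup-convolution with a ball indicator, use the layer-cake formula together with Minkowski's polynomiality theorem to get monotone convergence of the difference quotients, and identify the resulting perimeter integral via Theorem \ref{thm:coarea}, while $\int h_g\,\dd S_f$ becomes a constant times $\int|\nabla f|$. The one place you deviate is the very first line: in the proposition $\lambda\cdot\oo_{mB_{2}^{n}}$ is meant as the pointwise scalar multiple (the function equal to $\lambda$ on $mB_{2}^{n}$ and $0$ outside), not the dilation $(t\cdot f)(x)=f(x/t)^{t}$; under the intended reading $h_{g}(y)=m|y|+\log\lambda$, so your identities $g=\oo_{\lambda mB_{2}^{n}}$ and $h_{g}(y)=\lambda m|y|$ do not hold, and your proof as written only covers $\lambda=1$. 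The missing step is short and is exactly how the paper begins: replacing $g$ by $e^{c}g$ shifts $\delta(f,g)$ by $c\int f$ (because $\int(f\star(t\cdot e^{c}g))=e^{tc}\int(f\star(t\cdot g))$) and shifts $\int h_{g}\,\dd S_{f}$ by $c\,S_{f}(\RR^{n})=c\int f$, so the validity of \eqref{eq:variation-ball} is unchanged and one may assume $\lambda=1$, after which your argument goes through verbatim with $R=m$. One small further remark: the ``borderline case $\int M_{s}f=+\infty$'' you worry about at the end never occurs, since an integrable log-concave function decays exponentially and hence $M_{s}f$ is integrable for every $s>0$; this is also what justifies the downward monotone (or dominated) convergence step.
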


\begin{proof}
We first observe that multiplying $g$ by a constant cannot change
the validity of \eqref{eq:variation-ball}. Indeed, define $\widetilde{g}=e^{c}\cdot g$
for some $c\in\RR$. Then on the left hand side we obtain
\begin{align*}
\delta(f,\widetilde{g}) & =\left.\frac{\dd}{\dd t}\right|_{t=0^{+}}\int\left(f\star\left(t\cdot\widetilde{g}\right)\right)=\left.\frac{\dd}{\dd t}\right|_{t=0^{+}}\left[e^{tc}\cdot\int\left(f\star\left(t\cdot g\right)\right)\right]\\
 & =c\cdot\int f+\left.\frac{\dd}{\dd t}\right|_{t=0^{+}}\int\left(f\star\left(t\cdot g\right)\right)=c\int f+\delta(f,g),
\end{align*}
 While on the right hand side we obtain 
\[
\int h_{\widetilde{g}}\dd S_{f}=\int\left(h_{g}+c\right)\dd S_{f}=c\int\dd S_{f}+\int h_{g}\dd S_{f}=c\int f+\int h_{g}\dd S_{f}.
\]
 Since both sides changed by the same additive term, the validity
of \eqref{eq:variation-ball} did not change. Hence we may assume
without loss of generality that $\lambda=1$, i.e. $g=\oo_{mB_{2}^{n}}$. 

Let us compute both sides of \eqref{eq:variation-ball}. Write $f_{t}=f\star\left(t\cdot g\right)$,
so by definition
\[
f_{t}(x)=\sup_{y\in\RR^{n}}f(x-y)\oo_{mB_{2}^{n}}\left(\frac{y}{t}\right)^{t}=\sup_{y\in tmB_{2}^{n}}f(x-y).
\]
It follows that if we set $K_{s}=\left\{ x:\ f(x)\ge s\right\} $
then 
\[
\left\{ x:\ f_{t}(x)\ge s\right\} =K_{s}+tmB_{2}^{n},
\]
so by the layer cake representation we have 
\[
\frac{\int f_{t}-\int f}{t}=\frac{\int_{0}^{\infty}\left|K_{s}+tB_{2}^{n}\right|\dd s-\int_{0}^{\infty}\left|K_{s}\right|\dd s}{t}=\int_{0}^{\infty}\frac{\left|K_{s}+tmB_{2}^{n}\right|-\left|K_{s}\right|}{t}\dd s.
\]
For every $s>0$ we have 
\[
\frac{\left|K_{s}+tmB_{2}^{n}\right|-\left|K_{s}\right|}{t}=m\frac{\left|K_{s}+tmB_{2}^{n}\right|-\left|K_{s}\right|}{tm}\xrightarrow{t\to0^{+}}m\cdot\left|\partial K_{s}\right|.
\]
 Moreover, Minkowski's polynomiality theorem (see e.g. Theorem 5.1.7
of \cite{Schneider2013}) implies that for every fixed $s>0$ the
left hand side is a polynomial in $t$ with non-negative coefficients,
and hence monotone in $t$. Therefore we may apply the monotone convergence
theorem and deduce that 
\[
\lim_{t\to0^{+}}\frac{\int f_{t}-\int f}{t}=m\int_{0}^{\infty}\left|\partial K_{s}\right|\dd s.
\]

On the other hand, we have $h_{g}(y)=m\left|y\right|$, so 
\[
\int h_{g}\dd S_{f}=m\int\left|\nabla\left(-\log f\right)\right|f=m\int\left|\nabla f\right|.
\]

Therefore in the case $g=\oo_{mB_{2}^{n}}$, formula \eqref{eq:variation-ball}
reduces to the co-area formula \eqref{eq:lc-coarea}. By Theorem \ref{thm:coarea},
it holds if and only if $f$ is essentially continuous. 
\end{proof}
Proposition \ref{prop:first-variation-ball} explains the role of
essential continuity in the subject, but the full proof of Theorem
\ref{thm:first-variation} is more technically involved. We will need
Proposition \ref{prop:pointwise-der} from Section \ref{sec:first-variation},
as well as two more results. The first is contained e.g. in Lemma
3.2 of \cite{Artstein-Avidan2004}:
\begin{prop}
\label{prop:lc-int-conv}Let $f,f_{1},f_{2},\ldots:\RR^{n}\to[0,\infty)$
be log-concave functions such that $f_{i}\xrightarrow{i\to\infty}f$
pointwise. Then $\int f_{i}\to\int f$. 
\end{prop}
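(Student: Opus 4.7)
The plan is to sandwich $\int f$ between $\liminf_i \int f_i$ and $\limsup_i \int f_i$.

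The lower bound $\int f \le \liminf_i \int f_i$ is immediate from Fatou's lemma, using only $f_i\ge 0$ and pointwise convergence. The real content is the reverse inequality, which I would establish via dominated convergence once a uniform integrable majorant is produced. The case $\int f=\infty$ is vacuous, and the main case is when $f$ is not identically zero (the degenerate case $f\equiv 0$ requires a separate remark and some extra hypothesis). Translating so that $f(0)>0$, pointwise convergence gives $f_i(0)\ge f(0)/2=:a>0$ for all large $i$, and after passing to a subsequence if needed one also has $\int f_i\le V<\infty$.

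The key technical fact is that a log-concave function $g=e^{-\phi}$ on $\RR^n$ with $g(0)\ge a$ and $\int g\le V$ admits uniform exponential tail control depending only on $n,a,V$. Indeed, Markov's inequality bounds the convex sublevel set $S=\{\phi\le 1-\log a\}$ by $|S|\le Ve/a$. Along any ray in direction $\theta\in\SS^{n-1}$ let $T_\theta$ denote the radius of $S$, so $\phi(T_\theta\theta)\le 1-\log a$ while $\phi(0)\le-\log a$; writing $T_\theta\theta$ as a convex combination of $0$ and $t\theta$ for $t\ge T_\theta$ and using convexity of $\phi$ on the ray yields
\[
\phi(t\theta)\;\ge\;\frac{t}{T_\theta}-\log a,\qquad\text{hence}\qquad g(t\theta)\;\le\;a\,e^{-t/T_\theta}.
\]
The polar-coordinate identity $\int_{\SS^{n-1}}T_\theta^n\,d\theta=n|S|\le nVe/a$ then turns these ray-wise bounds into an integrable envelope, giving in particular that $\int_{|x|\ge R}g\to 0$ as $R\to\infty$ at a rate controlled purely by $n,a,V$.

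The main obstacle is promoting this envelope to a \emph{single} majorant valid for all $f_i$ simultaneously, rather than an $i$-dependent bound $A_i e^{-b_i|x|}$. I would handle this either by a uniform integrability (Vitali) argument, leveraging the uniform $L^n$-bound on the directional radii $T_{i,\theta}$, or by combining the uniform tail estimate above with the standard fact that pointwise convergence of convex functions $\phi_i\to\phi$ upgrades to uniform convergence on compact subsets of $\mathrm{int}(\dom\phi)$. The latter approach lets one split $\int f_i=\int_{|x|\le R}f_i+\int_{|x|>R}f_i$, take $i\to\infty$ in the first integral (uniform convergence on the compact ball), let $R\to\infty$ in the second (uniform tail estimate), and conclude $\int f_i\to\int f$.
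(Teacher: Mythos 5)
First, a point of reference: the paper does not prove this proposition at all --- it is quoted from Lemma 3.2 of \cite{Artstein-Avidan2004} --- so there is no internal proof to compare against. Your overall architecture (Fatou for $\liminf_i\int f_i\ge\int f$, then a log-concavity-generated integrable majorant plus dominated convergence for the reverse inequality) is the right one and matches the cited lemma. You are also right to flag the degenerate case: as literally stated the proposition fails for $f\equiv0$ (take $f_i=\oo_{[i,i+1]}$ on $\RR$), and the honest hypothesis is $\int f>0$, which is what holds in the paper's application, where the relevant convergence is even monotone.

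The genuine gap is a circularity in the construction of the majorant. Your tail estimate for a single log-concave $g$ takes as input \emph{both} $g(0)\ge a$ \emph{and} $\int g\le V$, and you obtain the latter for the sequence by ``passing to a subsequence if needed.'' Passing to a subsequence cannot produce such a bound: if $\int f_{i_j}\to\infty$ along some subsequence, no further subsequence is bounded, and ruling this scenario out is precisely part of the inequality $\limsup_i\int f_i\le\int f$ that you are trying to prove. The same unproved bound is what your two proposed fixes (Vitali, or locally uniform convergence of $\phi_i\to\phi$ on $\mathrm{int}(\dom\phi)$, which in any case controls nothing outside the support of $f$) would still need for the tail. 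The repair is to build the envelope from pointwise convergence at \emph{finitely many well-chosen points} only, which is all the hypothesis gives uniformly in $i$. Concretely: pick a simplex $\Delta$ with $0\in\mathrm{int}\,\Delta\subseteq\mathrm{int}\{f>0\}$; convergence at its vertices plus log-concavity gives $f_i\ge b>0$ on all of $\Delta$ for large $i$. Since $\int f<\infty$ forces $f$ to decay exponentially, pick a large $R$ and a fine finite net $\{R\theta_l\}$ on the sphere of radius $R$ with $f(R\theta_l)<be^{-2}$; for large $i$ the convex set $\{f_i\ge be^{-1}\}$ contains $\Delta$ but misses every net point, hence lies in a fixed ball $B(0,R')$. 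Running log-concavity along rays from the origin (using $f_i\ge b$ near $0$ and $f_i<be^{-1}$ outside $B(0,R')$, together with $f_i(0)\le f(0)+1$) then yields a single integrable majorant $f_i(y)\le C\max\{1,e^{-|y|/R'}\}$-type bound valid for all large $i$ simultaneously, and dominated convergence finishes. With this replacement your argument becomes a complete and correct proof of the statement under the (necessary) extra assumption $\int f>0$.
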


The second is a very simple measure theoretic lemma:
\begin{lem}
\label{lem:fatou-comparison}Let $\left\{ u_{t}\right\} _{t>0},\left\{ v_{t}\right\} _{t>0},\left\{ w_{t}\right\} _{t>0}$
be families of integrable functions $u_{t},v_{t},w_{t}:\RR^{n}\to\RR$
such that $u_{t}\xrightarrow{t\to0^{+}}u$ , $v_{t}\xrightarrow{t\to0^{+}}v$
and $w_{t}\xrightarrow{t\to0^{+}}w$ almost everywhere. Assume that:
\begin{enumerate}
\item $u_{t}\le v_{t}\le w_{t}$ for all $t>0$.
\item $\int w_{t}\xrightarrow{t\to0^{+}}\int w<\infty$.
\item $\int u_{t}\xrightarrow{t\to0^{+}}\int u>-\infty$.
\end{enumerate}
Then we also have $\int v_{t}\xrightarrow{t\to0^{+}}\int v$. 

\end{lem}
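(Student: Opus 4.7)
The plan is to deduce the convergence $\int v_t \to \int v$ from two applications of Fatou's lemma, one to each of the non-negative ``gap'' sequences $w_t - v_t \ge 0$ and $v_t - u_t \ge 0$. This trick converts a problem about signed convergence into two one-sided $\liminf$ estimates, which combine to pin $\int v_t$ between $\int v$ from above and from below.

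Before applying Fatou, I would pass assumption (1) to the pointwise a.e.\ limit to obtain $u \le v \le w$ a.e. Splitting into positive and negative parts gives $v^+ \le w^+$ and $v^- \le u^-$, whence $|v| \le |u| + |w|$. Since $u$ and $w$ are integrable, $v$ is integrable and $\int v$ is a finite real number, so all the arithmetic that follows is legitimate.

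Now applying Fatou's lemma to $w_t - v_t \ge 0$, which converges a.e.\ to $w - v \ge 0$, gives
\[
\int (w-v) \;\le\; \liminf_{t\to 0^+} \int (w_t - v_t) \;=\; \int w \;-\; \limsup_{t\to 0^+}\int v_t,
\]
where the last equality uses hypothesis (2) together with the finiteness of $\int w$. Rearranging yields $\limsup_{t\to 0^+}\int v_t \le \int v$. Symmetrically, Fatou applied to $v_t - u_t \ge 0$ converging a.e.\ to $v-u \ge 0$, together with hypothesis (3) and the finiteness of $\int u$, yields $\liminf_{t\to 0^+}\int v_t \ge \int v$. These two bounds combine to give $\lim_{t\to 0^+}\int v_t = \int v$.

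I do not anticipate any real obstacle: this is essentially a dynamic version of the dominated convergence theorem, where the fixed dominating envelope is replaced by sequences $u_t, w_t$ whose integrals converge. The only care needed is to justify that $\int v$ is finite (handled by the sandwich at the limit) and that subtracting $\int w$ or $\int u$ from a $\liminf$/$\limsup$ of real sequences is valid, which it is because these quantities are finite.
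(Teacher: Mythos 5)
Your proof is correct and follows essentially the same route as the paper: two applications of Fatou's lemma, to the non-negative gaps $w_t - v_t$ and $v_t - u_t$, yielding the matching $\limsup$ and $\liminf$ bounds. The extra remark verifying integrability of $v$ is a harmless (and welcome) addition.
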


\begin{proof}
Applying Fatou's lemma to $w_{t}-v_{t}$ we have 
\[
\int w-\limsup_{t\to0^{+}}\int v_{t}=\liminf_{t\to0^{+}}\int\left(w_{t}-v_{t}\right)\ge\int\left(w-v\right)=\int w-\int v,
\]
 so $\limsup_{t\to0^{+}}\int v_{t}\le\int v$. Similarly we may apply
Fatou's lemma to $v_{t}-u_{t}$ and obtain 
\[
\liminf_{t\to0^{+}}\int v_{t}-\int u=\liminf_{t\to0^{+}}\int\left(v_{t}-u_{t}\right)\ge\int(v-u)=\int v-\int u,
\]
 so $\liminf_{t\to0^{+}}\int v_{t}\ge\int v$. The claim follows. 
\end{proof}
We are now ready to prove Theorem \ref{thm:first-variation}. We will
need a bit of notation for the proof. First, we write $f=e^{-\phi}$
and $g=e^{-\beta}$. We also set $\psi=h_{f}=\phi^{\ast}$ and $\alpha=h_{g}=\beta^{\ast}$.
Finally we define $\psi_{t}=\psi+t\alpha$, $\phi_{t}=\psi_{t}^{\ast}$
and $f_{t}=e^{-\phi_{t}}=f\star\left(t\cdot g\right)$. 

We first prove the theorem under the extra assumption $\alpha$ grows
very slowly:
\begin{lem}
\label{lem:var-linear-growth}Under the assumptions of Theorem \ref{thm:first-variation}
assume further that 
\[
-m\le h_{g}(y)\le m\left|y\right|+c
\]
 for some $m,c>0$. Then 
\[
\delta(f,g)=\int h_{g}\dd S_{f}.
\]
\end{lem}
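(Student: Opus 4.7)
The plan is to apply Lemma \ref{lem:fatou-comparison} to the family $v_t := (f_t - f)/t$, sandwiching it between an explicit lower bound coming from $\alpha \ge -m$ and an upper bound coming from the ball case already established in Proposition \ref{prop:first-variation-ball}.

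First, I establish pointwise a.e.\ convergence of $v_t$. Proposition \ref{prop:pointwise-der} applies: $\psi = h_f$ and $\alpha = h_g$ are lower semi-continuous, $\alpha \ge -m$ is bounded below, and $\psi(0) = -\inf \phi$ and $\alpha(0) = -\inf \beta$ are both finite since $f,g$ are bounded log-concave with positive integral. Thus at every $x$ where $\phi$ is differentiable (hence almost everywhere on $\dom \phi$),
\[
\frac{\phi_t(x)-\phi(x)}{t}\xrightarrow{t\to 0^+} -\alpha(\nabla \phi(x)),
\]
and exponentiating yields $v_t(x) = f(x)\cdot(e^{\phi(x)-\phi_t(x)}-1)/t \to f(x)\,\alpha(\nabla \phi(x))$. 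The condition $\alpha(y)\le m|y|+c$ forces $\dom\beta \subseteq m\overline{B}_2^n$, so $\mathrm{supp}(f_t)\subseteq \mathrm{supp}(f)+tmB_2^n$; outside $\overline{\mathrm{supp}(f)}$ we therefore have $v_t = 0$ for all small $t$. Setting $v := f\,\alpha(\nabla\phi)$ on $\dom\phi$ and $v := 0$ elsewhere, we conclude $v_t \to v$ a.e.

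For the lower bound, $\alpha\ge -m$ implies $\psi_t \ge \psi - tm$, hence $\phi_t \le \phi + tm$ after Legendre transform, hence $f_t \ge e^{-tm} f$. Set $u_t := f\cdot(e^{-tm}-1)/t$, so $u_t \le v_t$ and $\int u_t \to -m\int f$ by dominated convergence. For the upper bound, introduce $\hat g := e^c \cdot \oo_{mB_2^n}$, whose support function is exactly $h_{\hat g}(y) = m|y|+c \ge h_g(y)$. Since the Legendre transform reverses order, $\psi + t h_g \le \psi + t h_{\hat g}$ gives $\phi_t \ge \hat\phi_t$, hence $f_t \le \hat f_t := f\star(t\cdot \hat g)$. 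Setting $w_t := (\hat f_t - f)/t$ we obtain $v_t \le w_t$, and the same argument via Proposition \ref{prop:pointwise-der} (applied with $h_{\hat g}$ in place of $h_g$) shows $w_t \to \hat w := f\cdot(m|\nabla\phi|+c)$ a.e.

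The key integral is $\lim_{t\to 0^+}\int w_t$: since $\hat g$ is of the form $\lambda\cdot\oo_{mB_2^n}$ with $\lambda = e^c$ and $f$ is essentially continuous, Proposition \ref{prop:first-variation-ball} gives
\[
\int w_t \;=\; \frac{\int \hat f_t-\int f}{t}\;\xrightarrow{t\to 0^+}\;\delta(f,\hat g)\;=\;\int h_{\hat g}\,\dd S_f \;=\; m\int|\nabla\phi|\,f\,\dd x + c\int f,
\]
which, by Proposition \ref{prop:int-grad} and the push-forward identity \eqref{eq:push-forward}, equals $m\int|\nabla f| + c\int f = \int \hat w < \infty$. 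Lemma \ref{lem:fatou-comparison} applied to $(u_t, v_t, w_t)$ now yields $\int v_t \to \int v = \int f\,\alpha(\nabla\phi)\,\dd x = \int h_g\,\dd S_f$, which is the claim. The main obstacle is producing the upper bound: the linear-growth hypothesis on $h_g$ is exactly what lets us compare $f_t$ to a sup-convolution with an indicator-type function, so that the already-proven ball case supplies both the pointwise limit $\hat w$ and the convergence of $\int w_t$ needed to feed Lemma \ref{lem:fatou-comparison}.
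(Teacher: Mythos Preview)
Your proof is correct and follows essentially the same approach as the paper's: you sandwich $v_t=(f_t-f)/t$ between the lower bound $u_t$ coming from $\alpha\ge -m$ and the upper bound $w_t$ coming from the comparison function $\hat g=e^c\cdot\oo_{mB_2^n}$, use Proposition~\ref{prop:pointwise-der} for the pointwise limits, invoke Proposition~\ref{prop:first-variation-ball} (together with Proposition~\ref{prop:int-grad} for finiteness) to get $\int w_t\to\int\hat w$, and conclude via Lemma~\ref{lem:fatou-comparison}. The only differences from the paper are cosmetic (e.g.\ you justify $f_t\le\hat f_t$ via order reversal of the Legendre transform rather than via $g\le\hat g$, and you phrase the ``outside the support'' argument in terms of $\dom\beta\subseteq m\overline{B}_2^n$).
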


\begin{rem}
\label{rem:finiteness-linear}Unlike the more general Theorem \ref{thm:first-variation},
the equality in the lemma is always an equality of finite quantities.
Indeed, 
\[
\int h_{g}\dd S_{f}=\int h_{g}\left(\nabla\phi\right)e^{-\phi}\le\int\left(m\left|\nabla\phi\right|+c\right)e^{-\phi}=m\int\left|\nabla\left(e^{-\phi}\right)\right|+c\int e^{-\phi},
\]
 which is finite by Proposition \ref{prop:int-grad}. 
\end{rem}

\begin{proof}
Write $\widetilde{g}=e^{c}\cdot\oo_{mB_{2}^{n}}$, and observe that
$h_{\widetilde{g}}(y)=m\left|y\right|+c$. Define 
\[
\widetilde{f}_{t}(x)=\left(f\star\left(t\cdot\widetilde{g}\right)\right)(x)=e^{tc}\cdot\max_{z:\ \left|z-x\right|\le mt}f(z).
\]
 Since $h_{g}\le h_{\widetilde{g}}$ we also have $g\le\widetilde{g}$,
so for all $t>0$ we have $f_{t}\le\widetilde{f}_{t}$. On the other
hand, we also have 
\[
f_{t}=e^{-\left(\psi+t\alpha\right)^{\ast}}\ge e^{-\left(\psi-tm\right)^{\ast}}=e^{-\left(\phi+tm\right)}=e^{-tm}f.
\]

Therefore if we define $u_{t}=\frac{e^{-tm}f-f}{t}$, $v_{t}=\frac{f_{t}-f}{t}$
and $w_{t}=\frac{\widetilde{f}_{t}-f}{t}$ then $u_{t}\le v_{t}\le w_{t}$
for all $t>0$. 

We claim that 
\begin{equation}
v_{t}\to\alpha\left(\nabla\phi\right)f\label{eq:pointwise-diff}
\end{equation}
 almost everywhere, where we interpret the right hand side to be $0$
whenever $f=0$. 

Indeed, this will follow from Proposition \ref{prop:pointwise-der}.
More precisely, let $K=\overline{\left\{ x:\ f(x)\ne0\right\} }$
denote the support of $f$. As a convex function $\phi$ is differentiable
almost everywhere on $K$, so at almost every $x\in K$ we may apply
Proposition \ref{prop:pointwise-der} and deduce that 
\[
\left.\frac{\dd}{\dd t}\right|_{t=0^{+}}\phi_{t}(x)=-\alpha\left(\nabla\phi(x)\right).
\]
 By the chain rule we then have 
\[
\left.\frac{\dd}{\dd t}\right|_{t=0^{+}}f_{t}(x)=\left.\frac{\dd}{\dd t}\right|_{t=0^{+}}e^{-\phi_{t}}(x)=\alpha\left(\nabla\phi(x)\right)\cdot f(x)
\]
 like we wanted. On the other hand, for every $x\notin K$ we have
$d(x,K)=\delta>0$, and then for every $t<\frac{\delta}{m}$ we have
$\widetilde{f}_{t}(x)=0$. Hence $f_{t}(x)=0$ as well for all small
enough $t>0$, so we obviously have $\lim_{t\to0^{+}}v_{t}(x)=0$.
This concludes the proof of \eqref{eq:pointwise-diff}. 

The same proof with $g$ replaced by $\widetilde{g}$ shows that $w_{t}\to\left(m\left|\nabla\phi\right|+c\right)f$
almost everywhere, and basic calculus implies that $u_{t}\to-mf$.
Let us call these three pointwise limits $v$, $w$ and $u$. It is
trivial that 
\[
\int u_{t}\xrightarrow{t\to0^{+}}-m\int f=\int u>-\infty.
\]
Since $f$ is essentially continuous we may apply Proposition \ref{prop:first-variation-ball}
and deduce that 
\[
\lim_{t\to0^{+}}\int w_{t}=\left.\frac{\dd}{\dd t}\right|_{t=0^{+}}\int\widetilde{f}_{t}=\int h_{\widetilde{g}}\dd S_{f}=\int w.
\]
Moreover, remark \ref{rem:finiteness-linear} explains why $\int w<\infty$. 

Hence we may apply Lemma \ref{lem:fatou-comparison} and deduce that
\[
\lim_{t\to0^{+}}\int v_{t}=\int\lim_{t\to0^{+}}v_{t}=\int\alpha\left(\nabla\phi\right)f=\int h_{g}\dd S_{f},
\]
completing the proof. 
\end{proof}
Now we can finally prove Theorem \ref{thm:first-variation} in its
full generality:
\begin{proof}[Proof of Theorem \ref{thm:first-variation}.]
First we claim that translating $g$ does not change the validity
of the theorem. Indeed, the left hand side clearly doesn't change
when we replace $g$ by $\widetilde{g}(x)=g(x-v)$. For the right
hand side we have $h_{\widetilde{g}}(x)=h_{g}(x)+\left\langle x,v\right\rangle $,
so by Proposition \pageref{prop:int-grad} we have 
\begin{align*}
\int h_{\widetilde{g}}\dd S_{f} & =\int h_{g}\dd S_{f}+\int\left\langle x,v\right\rangle \dd S_{f}=\int h_{g}\dd S_{f}+\int\left\langle \nabla\phi,v\right\rangle f\\
 & =\int h_{g}\dd S_{f}-\left\langle \int\nabla f,v\right\rangle =\int h_{g}\dd S_{f}.
\end{align*}
Hence we may translate and assume that $\max g=g(0)>0$, which means
that $\min\beta=\beta(0)<\infty$. This implies that $\alpha\ge-\beta(0)>-\infty$
is bounded from below.

For every integer $i>0$ we define 
\[
g_{i}(x)=\begin{cases}
g(x) & \left|x\right|\le i\\
0 & \text{otherwise.}
\end{cases}
\]
 Define $f_{t,i}=f\star t\cdot g_{i}$ and $\alpha_{i}=h_{g_{i}}$.
We claim that $\alpha_{i}(x)\nearrow\alpha(x)$ and $f_{t,i}(x)\nearrow f_{t}(x)$
as $i\to\infty$, where $x\in\RR^{n}$ and $t>0$ are fixed. 

Let us show that $f_{t,i}(x)\nearrow f_{t}(x)$. Since $g_{i}$ is
increasing in $i$ and $g_{i}\le g$ it follows that $f_{t,i}$ is
also increasing in $i$ and $f_{t,i}\le f$. Therefore we only have
to prove that 
\[
\sup_{i\ge1}f_{t,i}(x)\ge f_{t}(x).
\]
 Indeed, for every $\epsilon>0$ there exists $y_{0}\in\RR^{n}$ such
that 
\[
f_{t}(x)=\sup_{y\in\RR^{n}}f(x-y)g\left(\frac{y}{t}\right)^{t}\le f(x-y_{0})g\left(\frac{y_{0}}{t}\right)^{t}+\epsilon.
\]
 Therefore for every $i_{0}>\left|y_{0}/t\right|$ we have
\begin{align*}
\sup_{i\ge1}f_{t,i}(x) & \ge f_{t,i_{0}}(x)=\sup_{y\in\RR^{n}}f(x-y)g_{i_{0}}\left(\frac{y}{t}\right)^{t}\ge f(x-y_{0})g_{i_{0}}\left(\frac{y_{0}}{t}\right)^{t}\\
 & =f(x-y_{0})g\left(\frac{y_{0}}{t}\right)^{t}\ge f_{t}(x)-\epsilon.
\end{align*}
Since $\epsilon>0$ was arbitrary the claim is proved. The proof that
$\alpha_{i}(x)\to\alpha(x)$ is similar.

Note that 
\[
\alpha_{i}(y)=\sup_{x\in\RR^{n}}\left(\left\langle x,y\right\rangle -\beta_{i}(x)\right)=\sup_{\left|x\right|\le i}\left(\left\langle x,y\right\rangle -\beta(x)\right)\le i\left|y\right|-\beta(0),
\]
 and that $\alpha_{i}(y)\ge-\beta_{i}(0)=-\beta(0)$. Hence we may
apply Lemma \ref{lem:var-linear-growth} and conclude that
\[
\delta(f,g_{i})=\lim_{t\to0^{+}}\frac{\int f_{t,i}-\int f}{t}=\int h_{g_{i}}\dd S_{f}.
\]
 In particular, as was explained in Remark \ref{rem:finiteness-linear}
these expressions are finite. 

Since $\alpha_{i}\nearrow\alpha$ we have by monotone convergence
$\int h_{g_{i}}\dd S_{f}\nearrow\int h_{g}\dd S_{f}$. For the left
hand side, define $\rho_{i}(t)=\int f_{t,i}$ and $\rho(t)=\int f_{t}$,
and set $\rho_{i}(0)=\rho(0)=\int f$. By Proposition \ref{prop:lc-int-conv}
we have $\rho_{i}(t)\xrightarrow{i\to\infty}\rho(t)$ for all $t>0$. 

For every $i$ the function 
\[
\phi_{i,t}(x)=\left(\psi+t\alpha_{i}\right)^{\ast}(x)=\sup_{y\in\RR^{n}}\left[\left\langle x,y\right\rangle -\psi(y)-t\alpha_{i}(y)\right]
\]
 is jointly convex in $(t,x)\in\RR^{n+1}$ as the supremum of linear
functions. The Prékopa-Leindler inequality then implies that $\rho_{i}(t)=\int e^{-\phi_{i,t}(x)}\dd x$
is log-concave as well. Similarly $\rho$ is log-concave. Hence 
\begin{align*}
\left(\log\rho\right)_{+}^{\prime}(0) & =\lim_{t\to0^{+}}\frac{\log\rho(t)-\log\rho(0)}{t}=\sup_{t>0}\frac{\log\rho(t)-\log\rho(0)}{t}\\
 & =\sup_{t>0}\sup_{i}\frac{\log\rho_{i}(t)-\log\rho_{i}(0)}{t}=\sup_{i}\sup_{t>0}\frac{\log\rho_{i}(t)-\log\rho_{i}(0)}{t}\\
 & =\sup_{i}\left(\log\rho_{i}\right)_{+}^{\prime}(0)=\sup_{i}\frac{\left(\rho_{i}\right)_{+}^{\prime}(0)}{\int f}=\sup_{i}\frac{\int\alpha_{i}\dd S_{f}}{\int f}=\frac{\int\alpha\dd S_{f}}{\int f}.
\end{align*}
 On the other hand, we also have $\left(\log\rho\right)_{+}^{\prime}(0)=\frac{\rho_{+}^{\prime}(0)}{\int f}$.
One has to be careful here, since we do not know if $\rho$ is even
continuous at $t=0$, let alone differentiable. Therefore we interpret
this equality to mean that if $\left(\log\rho\right)_{+}^{\prime}(0)=+\infty$
then $\rho_{+}^{\prime}(0)=+\infty$ as well. Under this convention
we see that indeed 
\[
\lim_{t\to0^{+}}\frac{\int f_{t}-\int f}{t}=\rho_{+}^{\prime}(0)=\int\alpha\dd S_{f},
\]
 and the proof is complete. 
\end{proof}

\section{\label{sec:Lp-intro}Minkowski's theorem and $L^{p}$-surface area
measures}

It is now time to discuss a classic problem we avoided so far: What
measures are surface area measures? In the classic case of convex
bodies, the answer is known as Minkowski's existence theorem:
\begin{thm}
\label{thm:minkowski-classic}Let $\mu$ be a finite Borel measure
on $\SS^{n-1}$. Then $\mu=S_{K}$ for some convex body $K$ if and
only if it satisfies the following two conditions:
\begin{enumerate}
\item $\mu$ is centered, i.e. $\int_{\SS^{n-1}}x\dd\mu(x)=0$.
\item $\mu$ is not supported on any great sub-sphere of $\SS^{n-1}$.
\end{enumerate}
\end{thm}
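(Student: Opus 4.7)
The plan is to handle necessity and sufficiency separately. The necessity direction is short: applying the first variation formula \eqref{eq:body-area-measure} with $L$ taken to be $B+tv$ and comparing with $L=B$ for a small ball $B$, the translation invariance $\left|K+tv\right|=\left|K\right|$ forces $\int_{\SS^{n-1}}\langle v,u\rangle \dd S_{K}(u)=0$ for every $v\in\RR^{n}$, which is precisely condition (1). For condition (2), if $\mathrm{supp}(S_{K})\subseteq v^{\perp}\cap\SS^{n-1}$ for some unit vector $v$, then every supporting hyperplane of $K$ would be parallel to the line $\RR v$, making $K$ unbounded in the $v$-direction and contradicting compactness.

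For sufficiency I would use the classical variational method. Define the linear functional $\Phi(K)=\int_{\SS^{n-1}}h_{K}\dd\mu$ on the class of convex bodies with $\left|K\right|=1$, and minimize. Granting existence of a minimizer $K_{*}$, the Lagrange multiplier condition combined with \eqref{eq:body-area-measure} would give
\[
\int_{\SS^{n-1}}h_{L}\dd\mu=\lambda\int_{\SS^{n-1}}h_{L}\dd S_{K_{*}}
\]
for every convex body $L$, with some $\lambda>0$. Since differences of support functions of convex bodies are dense in $C(\SS^{n-1})$, this forces the identity of measures $\mu=\lambda S_{K_{*}}$. Replacing $K_{*}$ by $\lambda^{1/(n-1)}K_{*}$ and using the scaling $S_{rK}=r^{n-1}S_{K}$ then produces a body whose surface area measure is exactly $\mu$.

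The hard part will be producing the minimizer and ensuring it is non-degenerate. I would take a minimizing sequence $\{K_{j}\}$, translate each body so that its barycenter lies at the origin (which preserves $\Phi$ thanks to condition (1) applied to $\mu$, so the translation cost vanishes), and prove that the diameters stay bounded. This is the step where condition (2) is essential: if $\mathrm{diam}(K_{j})$ were unbounded in some direction $v$, then $h_{K_j}(u)+h_{K_j}(-u)$ would grow without bound for $u$ near $\pm v$, and because $\mu$ cannot be concentrated on $v^{\perp}\cap\SS^{n-1}$ it must assign positive mass to some open set of directions $u$ with $\langle u,v\rangle \neq 0$, forcing $\Phi(K_{j})\to\infty$ and contradicting the minimizing property. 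Blaschke selection would then produce a convergent subsequence with limit $K_{*}$, and continuity of $\Phi$ and of the volume functional identify $K_{*}$ as the minimizer; the constraint $\left|K_{*}\right|=1$ rules out collapse to a lower-dimensional set. The only remaining technical ingredient is to justify the Lagrange-multiplier step on the space of convex bodies, which follows by taking one-sided variations $K_{*}+tL$ along arbitrary convex bodies $L$ and invoking \eqref{eq:body-area-measure} together with the Brunn\textendash Minkowski inequality to obtain the stated identity.
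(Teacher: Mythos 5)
First, note that the paper does not prove this statement: Theorem \ref{thm:minkowski-classic} is classical background, and the author explicitly refers to Sections 8.1 and 8.2 of \cite{Schneider2013} for the proof. So there is no in-paper argument to compare against; your proposal has to stand on its own. Its overall architecture (necessity from the first variation formula; sufficiency by minimizing $\int h_{L}\dd\mu$ under a volume constraint, with conditions (1) and (2) used to center the minimizing sequence and to bound its diameter) is indeed the standard route, and the compactness part is essentially right, though the diameter bound needs \emph{both} hypotheses, not just (2): with $0\in K_{j}$ one bounds $\Phi(K_{j})\ge R_{j}\int\left\langle u,\theta_{j}\right\rangle _{+}\dd\mu(u)$, and positivity of $\theta\mapsto\int\left\langle u,\theta\right\rangle _{+}\dd\mu$ requires centeredness together with non-concentration on a great subsphere.

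The genuine gap is in the Euler--Lagrange step. The class of convex bodies is a convex cone, not a vector space, so the one-sided variations $K_{*}+tL$, $t\ge0$, that you propose yield only the \emph{inequality} $\int h_{L}\dd\mu\ge\lambda\int h_{L}\dd S_{K_{*}}$ for every convex body $L$ (with equality at $L=K_{*}$, since $\int h_{K_{*}}\dd S_{K_{*}}=n\left|K_{*}\right|$). You cannot take $t<0$, because $h_{K_{*}}-th_{L}$ need not be a support function unless $tL$ is a summand of $K_{*}$, and nothing in the construction gives you control of the regularity of $K_{*}$. Hence the density of differences of support functions in $C(\SS^{n-1})$ cannot be invoked yet: you have an inequality between two positive functionals, not an identity. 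The standard repairs are either (a) to vary $h_{K_{*}}$ by an arbitrary continuous function $f$ and pass to the Wulff shape $A\left[h_{K_{*}}+tf\right]$, using Alexandrov's lemma to differentiate its volume --- exactly the device the paper itself introduces in Section \ref{sec:Lp-intro} for the $L^{p}$ theory --- which produces a genuine two-sided derivative and hence the equality $\int f\dd\mu=\lambda\int f\dd S_{K_{*}}$ for all $f\in C(\SS^{n-1})$; or (b) to first prove the theorem for discrete measures by a finite-dimensional optimization over polytopes (where the admissible heights form an open set and Lagrange multipliers apply literally) and then approximate $\mu$ weakly. Separately, your necessity argument for (2) is not correct as stated: $\operatorname{supp}(S_{K})\subseteq v^{\perp}$ does not say anything about which supporting hyperplanes $K$ has (every direction supports a compact convex set). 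The clean argument is Cauchy's projection formula, $\int_{\SS^{n-1}}\left|\left\langle u,v\right\rangle \right|\dd S_{K}(u)=2\left|P_{v^{\perp}}K\right|>0$ for a body with interior points.
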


In this classical setting, the uniqueness is also well known \textendash{}
if $S_{K}=S_{L}$ for some convex bodies $K$ and $L$ then necessarily
$K=L+v$ for some $v\in\RR^{n}$. For a proof of these facts see e.g.
Sections 8.1 and 8.2 of \cite{Schneider2013}.

In \cite{Cordero-Erausquin2015} Cordero-Erausquin and Klartag proved
a functional version of Theorem \ref{thm:minkowski-classic}. In our
notation their result reads as follows:
\begin{thm}[\cite{Cordero-Erausquin2015}]
\label{thm:1-lc-minkowski}Let $\mu$ be a finite Borel measure on
$\RR^{n}$. Then $\mu=S_{f}$ for an essentially continuous $f\in\lc_{n}$
with $0<\int f<\infty$ if and only $\mu$ satisfies the following
two conditions:
\begin{enumerate}
\item $\mu$ is centered (so in particular $\int_{\RR^{n}}\left|x\right|\dd\mu(x)<\infty$).
\item $\mu$ is not supported on any lower dimensional linear subspace of
$\RR^{n}$.
\end{enumerate}
Moreover, $f$ is uniquely determined up to translations.
\end{thm}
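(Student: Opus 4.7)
Writing $f = e^{-\phi}$, the push-forward definition of $S_f$ gives
\[
\int y\,\dd\mu(y) = \int \nabla\phi(x)\,f(x)\,\dd x = -\int \nabla f(x)\,\dd x = 0
\]
by Proposition \ref{prop:int-grad}, which simultaneously shows $\int|y|\,\dd\mu = \int|\nabla f|<\infty$. For condition (2), if $\mu$ were supported on $v^{\perp}$ for some $0 \ne v \in \RR^n$, then $\langle \nabla\phi(x), v\rangle = 0$ for $f\,\dd x$-a.e.\ $x$; this forces $\phi$ to be constant in the direction $v$ on the convex set $\{\phi<\infty\}$, contradicting $0<\int f<\infty$.

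\textbf{Sufficiency --- variational setup.} After rescaling $\mu$, I would assume $\mu(\RR^n) = 1$ and look for $f$ with $\int f = 1$. The desired $f$ will be a maximizer of the functional
\[
J(\psi) = \log\int e^{-\psi^{\ast}}(y)\,\dd y - \int \psi\, \dd\mu
\]
on $\psi\in\cvx_n$ (playing the role of $h_f$, with $f = e^{-\psi^{\ast}}$). Concavity of $J$ is Prékopa--Leindler: for $\psi_t = (1-t)\psi_0 + t\psi_1$, one verifies that $e^{-\psi_t^{\ast}}$ is exactly the sup-convolution $\left((1-t)\cdot e^{-\psi_0^{\ast}}\right)\star\left(t\cdot e^{-\psi_1^{\ast}}\right)$, so Prékopa--Leindler provides the required lower bound on $\int e^{-\psi_t^{\ast}}$; the linear term is obviously affine. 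The functional is moreover invariant under $\psi(y)\mapsto\psi(y)+\langle v,y\rangle$ because $\mu$ is centered, reflecting the translation ambiguity in $f$.

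\textbf{Existence of a maximizer and the Euler--Lagrange equation.} Theorem \ref{thm:first-variation} is the key tool for reading off the optimality condition: if $\psi = h_f$ is a critical point of $J$, $f$ is essentially continuous, and $\int f = 1$, then for any admissible direction $\alpha = h_g$,
\[
0 = \left.\tfrac{\dd}{\dd t}\right|_{t=0^+} J(\psi+t\alpha) = \int \alpha\,\dd S_f - \int \alpha\,\dd\mu,
\]
so indeed $S_f = \mu$. To produce such a critical point I would take a maximizing sequence $(\psi_k)$ and use the invariances to normalize: condition (1) permits fixing the barycenter of $e^{-\psi_k^{\ast}}$ and its total integral at $1$, while condition (2) --- the non-degeneracy of $\mu$ --- provides the coercivity needed to prevent $\psi_k$ from drifting toward an affine function along some direction (affineness in direction $v$ would make $e^{-\psi_k^{\ast}}$ concentrate on $v^{\perp}$, and the resulting divergence of $\int \psi_k\,\dd\mu$ would be forbidden by boundedness of $J(\psi_k)$ only when $\mu$ charges every open half-space). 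Uniform growth estimates of the form $\psi_k(y)\ge c|y|-C$ follow, and a Blaschke-type selection yields a pointwise a.e.\ limit $\psi$ with $e^{-\psi^{\ast}} \in \lc_n$ and $0 < \int e^{-\psi^{\ast}} < \infty$; upper semi-continuity of $J$ along the convergence (Proposition \ref{prop:lc-int-conv} for the volume term, Fatou for the linear term) then shows $\psi$ is a maximizer.

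\textbf{Essential continuity and uniqueness.} The most delicate point, and the one I expect to require the most care, is verifying that the maximizer $f$ is essentially continuous --- the hypothesis required to invoke Theorem \ref{thm:first-variation}. Here the plan is to test optimality against the variation $g=\oo_{B_2^n}$: by Proposition \ref{prop:first-variation-ball} and Theorem \ref{thm:coarea}, the derivative of $\log\int e^{-\psi_t^{\ast}}$ differs from $\frac{1}{\int f}\int h_g\,\dd S_f$ by exactly the boundary contribution $\frac{1}{\int f}\int_{\partial K} f\,\dd\HH^{n-1}$, which vanishes iff $f$ is essentially continuous. Using only the one-sided bound from the proof of Theorem \ref{thm:first-variation} (which does not itself require essential continuity), any candidate maximizer with $\int_{\partial K}f\,\dd\HH^{n-1} > 0$ can be shown to admit a strictly improving perturbation, contradicting optimality. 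Once essential continuity is in hand, Theorem \ref{thm:first-variation} applies and gives $S_f = \mu$ as above. For uniqueness up to translation, two maximizers $\psi_0,\psi_1$ must realize equality along the segment between them in the concavity inequality for $J$; the equality case of Prékopa--Leindler forces the associated $e^{-\psi_0^{\ast}}$ and $e^{-\psi_1^{\ast}}$ to be translates of one another.
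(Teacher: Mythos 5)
First, a point of comparison: the paper does not actually prove Theorem \ref{thm:1-lc-minkowski} --- it is quoted from \cite{Cordero-Erausquin2015} --- so there is no internal proof to measure your proposal against. The closest internal analogue is the proof of Theorem \ref{thm:lp-minkowski} in Section \ref{sec:Lp-existence}, and your outline follows the same variational template (maximize $J$, concavity via Prékopa--Leindler, compactness modeled on Lemmas \ref{lem:zero-value} and \ref{lem:p-compactness}, then a first-order condition). Your necessity direction and the uniqueness-via-equality-in-Prékopa--Leindler argument are essentially right, though for condition (2) integrability alone does not suffice: if $\phi$ is constant in the direction $v$ on its domain, $f$ can still be integrable (e.g.\ when the domain is bounded in direction $v$), and one must then invoke essential continuity to exclude the resulting jump of $f$ on a positive-$\HH^{n-1}$ portion of $\partial K$.

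The genuine gap is the step you yourself flag as delicate: essential continuity of the maximizer. The mechanism you propose does not close. Maximality of $\psi$ in the direction $\alpha=|\cdot|=h_{\oo_{B_2^n}}$ only yields the one-sided inequality
\[
\frac{1}{\int f}\left(\int\left|\nabla f\right|\dd x+\int_{\partial K}f\,\dd\HH^{n-1}\right)\le\int\left|y\right|\dd\mu(y),
\]
which is perfectly consistent with $\int_{\partial K}f\,\dd\HH^{n-1}>0$; there is no a priori matching lower bound $\int\left|y\right|\dd S_{f}\ge\int f\cdot\int\left|y\right|\dd\mu$ that would force the boundary term to vanish. The opposite perturbation $\psi-t\left|\cdot\right|$ is outside the scope of Proposition \ref{prop:pointwise-der} (the direction $-\left|\cdot\right|$ is not bounded below), so you cannot simply make the derivative two-sided. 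This is exactly where \cite{Cordero-Erausquin2015} must work hardest, and the sketch supplies no substitute. A smaller imprecision in the same spirit: a maximizer of a concave functional gives $\frac{\dd}{\dd t}\big|_{t=0^{+}}J(\psi+t\alpha)\le0$, not $=0$, when only $\alpha=h_{g}$ and not $-\alpha$ is an admissible direction; the clean route --- the one the paper itself takes in the $L^{p}$ case --- is to perturb by bounded continuous $v$ and $-v$ via Proposition \ref{prop:bounded-variation}, which yields $\int v\,\dd\mu=\frac{1}{\int f}\int v\,\dd S_{f}$ for all such $v$ and hence the identification of the measures, without requiring $\psi+tv$ to be a support function.
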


Besides its geometric content this theorem can also be viewed analytically,
as an existence and uniqueness theorem for generalized solutions of
a Monge-Ampère type differential equation. Indeed, assume that $\frac{\dd\mu}{\dd x}=g$
for a smooth function $g$, and that the solution $f=e^{-\phi}$ to
the equation $S_{f}=\mu$ is smooth as well. Then using \eqref{eq:push-forward}
and the classic change of variables formula we see that 
\[
g\left(\nabla\phi(x)\right)\cdot\det\left(\nabla^{2}\phi(x)\right)=e^{-\phi(x)}
\]
 for all $x\in\RR^{n}$. This point of view is further explained in
\cite{Cordero-Erausquin2015}. We also remark that the ``uniqueness''
part of the theorem was also proved in \cite{Colesanti2013} under
some technical conditions. 

For the rest of this paper we will be mostly interested in an extension
of the surface area measure, known as the $L^{p}$-surface area measure.
Given two convex bodies $K,L$ containing the origin and a number
$p\ge1$, the $L^{p}$-combination $K+_{p}\left(t\cdot L\right)$
of $K$ and $L$ is defined via its support function by 
\begin{equation}
h_{K+_{p}t\cdot L}=\left(h_{K}^{p}+t\cdot h_{L}^{p}\right)^{1/p}.\label{eq:p-mean-direct}
\end{equation}
 Note that since $0\in K,L$ we know that $h_{K},h_{L}\ge0$, so the
right hand side is well-defined. Moreover, using the fact that the
$L^{p}$ norm is indeed a norm it is not hard to check that the right
hand side defines a convex, $1$-homogeneous function, and hence the
body $K+_{p}t\cdot L$ exists. For $p=1$ the $L^{p}$-addition $+_{1}$
coincides with the usual Minkowski addition. $L^{p}$ additions of
convex bodies were first defined by Firey (\cite{Firey1962}), and
the Brunn-Minkowski theory of such bodies was developed by Lutwak
(\cite{Lutwak1993}, \cite{Lutwak1996}). In particular, in \cite{Lutwak1993}
Lutwak proved an extension of \eqref{eq:body-area-measure} for this
case: For every pair convex bodies $K,L$ containing the origin we
have 
\begin{equation}
\lim_{t\to0^{+}}\frac{\left|K+_{p}t\cdot L\right|-\left|K\right|}{t}=\frac{1}{p}\int_{\SS^{n-1}}h_{L}^{p}h_{K}^{1-p}\dd S_{K}.\label{eq:p-variation}
\end{equation}

We refer to the measure $h_{K}^{1-p}\dd S_{K}$ as the $L^{p}$-surface
areas measure of $K$, and denote it by $S_{K,p}$. In the same paper
Lutwak proved an extension of Minkowski's existence theorem for $p\ge1$:
\begin{thm}[\cite{Lutwak1993}]
\label{thm:p-Minkowski-classic}Fix $p\ge1$, and let $\mu$ be an
\textbf{even} finite Borel measure on $\SS^{n-1}$ which is not supported
on any great sub-sphere. Then:
\begin{enumerate}
\item If $p\ne n$ there exists an origin-symmetric convex body $K$ (i.e.
$K=-K$) such that $S_{K,p}=\mu$. 
\item For $p=n$ there exists an origin-symmetric convex body $K$ such
that $S_{K,n}=c\cdot\mu$ for some $c>0$. 
\end{enumerate}
Moreover, the body $K$ is unique. 
\end{thm}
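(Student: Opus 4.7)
The plan is a variational argument in the spirit of Minkowski's original proof, as adapted by Lutwak. Define the functional
\[
\Phi(K) = \int_{\SS^{n-1}} h_K^p \, \dd\mu
\]
on the class $\mathcal{K}_e$ of origin-symmetric convex bodies in $\RR^n$, and look for a minimizer of $\Phi$ subject to the constraint $|K| = 1$. The motivation for this setup comes from \eqref{eq:p-variation}: deformation along the $L^p$-combination $K_0 +_p t \cdot L$ produces a first variation of $\Phi$ that is $\int h_L^p \dd\mu$ (by the very definition \eqref{eq:p-mean-direct}) and a first variation of volume that is $\frac{1}{p}\int h_L^p \dd S_{K_0, p}$, so a Lagrange-multiplier argument at a minimizer must yield an identity of the form $\mu = c\cdot S_{K_0,p}$.

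The step I expect to be the main obstacle is existence of a minimizer, which reduces to compactness of a minimizing sequence $\{K_i\}\subset \mathcal{K}_e$ with $|K_i| = 1$. If the diameters were unbounded, origin-symmetry would produce unit vectors $u_i\in\SS^{n-1}$ with $h_{K_i}(u_i)\to\infty$. Passing to a subsequence with $u_i\to u^*$, the hypothesis that $\mu$ is not supported on the great sub-sphere $(u^*)^{\perp}\cap\SS^{n-1}$ guarantees positive $\mu$-mass on a spherical cap around $\pm u^*$; since $h_{K_i}^p$ blows up uniformly on such a cap (by origin-symmetry and convexity, $h_{K_i}(v) \geq h_{K_i}(u_i)\,|\langle v, u_i\rangle|$ if the supporting point at $u_i$ lies on the axis, and a slight perturbation of this estimate works in general), we would get $\Phi(K_i)\to\infty$, contradicting minimization. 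Hence the diameters are bounded, Blaschke's selection theorem produces a Hausdorff-convergent subsequence, and a complementary argument using the volume constraint $|K_i|=1$ together with the non-degeneracy of $\mu$ shows the limit body $K_0$ has nonempty interior, so $K_0\in\mathcal{K}_e$ is a genuine minimizer.

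To extract the Euler--Lagrange equation, I would first regularize $K_0$ if necessary and then perturb its support function directly: for any even $C^2$ function $\phi$ on $\SS^{n-1}$, the function $h_{K_0}+t\phi$ is the support function of some $L_t\in\mathcal{K}_e$ for $|t|$ small, with
\[
\Phi(L_t) = \Phi(K_0) + tp\int h_{K_0}^{p-1}\phi\,\dd\mu + o(t),\qquad |L_t| = 1 + t\int \phi\,\dd S_{K_0} + o(t).
\]
The Lagrange condition at $K_0$ then yields a constant $c>0$ with $p\,h_{K_0}^{p-1}\dd\mu = c\,\dd S_{K_0}$, equivalently $\mu = (c/p)\cdot h_{K_0}^{1-p}\dd S_{K_0} = (c/p)\cdot S_{K_0,p}$. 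An approximation argument is needed to remove the smoothness assumption on $K_0$, but this is a standard technicality rather than the heart of the proof.

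Finally, I use the scaling identity $S_{\alpha K, p} = \alpha^{n-p} S_{K, p}$ (which follows from $h_{\alpha K} = \alpha h_K$ and $S_{\alpha K} = \alpha^{n-1} S_K$). When $p\neq n$ one chooses $\alpha = (c/p)^{1/(n-p)}$ so that $K:=\alpha K_0$ satisfies $S_{K,p} = \mu$ exactly, giving conclusion (1). When $p=n$ the scaling has no effect and one only obtains $\mu = c'\cdot S_{K_0,n}$, giving conclusion (2). Uniqueness in both cases is a consequence of the $L^p$-Minkowski mixed volume inequality of \cite{Lutwak1993}: if $S_{K,p}$ and $S_{L,p}$ agree (or agree up to a common multiplicative constant, in the $p=n$ case), the equality case of that inequality, together with origin-symmetry, forces $K=L$.
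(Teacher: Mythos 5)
Your proposal is correct and follows essentially the same route the paper sketches (and that Lutwak's original proof takes): minimize $\int h_K^p\,\dd\mu$ over origin-symmetric bodies under a volume normalization, use the non-degeneracy of $\mu$ for compactness of a minimizing sequence, read off $\mu=c\cdot S_{K,p}$ from the first-order optimality condition, rescale via $S_{\lambda K,p}=\lambda^{n-p}S_{K,p}$ when $p\ne n$, and get uniqueness from the equality case of the $L^p$-Minkowski inequality. The only point needing care is the Euler--Lagrange step, where $h_{K_0}+t\phi$ need not be a support function; the standard fix is to perturb via the Wulff shape $A\left[h_{K_0}+t\phi\right]$ (or via $L^p$-combinations) and invoke Aleksandrov's lemma, a technicality you correctly flag.
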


In order to explain some peculiarities about the statement of Theorem
\ref{thm:p-Minkowski-classic}, it is useful to say a few words about
its proof. In the proof one finds the body $K$ by minimizing the
functional 
\[
\Phi(L)=\left|L\right|^{-\frac{p}{n}}\int_{\SS^{n-1}}h_{L}^{p}\dd\mu
\]
 over the class of origin symmetric convex bodies. If the minimum
is attained at some body $K$, then it turns out that the first order
optimality condition ``$\nabla\Phi(K)=0$'' implies that $S_{K}=c\cdot\mu$
for some $c>0$. If $p\ne n$ we can dilate $K$ to have exactly $S_{K}=\mu$,
by noticing that $S_{\lambda K,p}=\lambda^{n-p}S_{K,p}$. Obviously
this scaling idea cannot work when $p=n$, as in this case $S_{\lambda K,n}=S_{K,n}$.
This explains why the case $p=n$ is special.

This very rough sketch of the proof can also explain why $\mu$ is
assumed to be even, an assumption that was unnecessary in the case
$p=1$. Indeed, the use of the first order optimality condition ``$\nabla\Phi(K)=0$''
requires the minimizer $K$ to be an interior point of the domain
of $\Phi$. When $p>1$, $\Phi$ can only be defined on convex bodies
containing the origin, to make $h_{L}^{p}$ well defined. Without
the assumption that $L$ is origin symmetric the minimum of $\Phi$
may be obtained at some body $K$ containing $0$ at its boundary,
which will make the argument impossible. Variants of Theorem \ref{thm:p-Minkowski-classic}
are known for non-even measures (see \cite{Chou2006} and \cite{Hug2005}),
but we will not require them here. 

In recent years there has been a lot of interest in the $L^{p}$ theory,
and in particular in $L^{p}$-surface area measures, for $0<p<1$.
When $p<1$ one cannot define $K+_{p}t\cdot L$ using \eqref{eq:p-mean-direct},
as the right hand side is not necessarily a convex function. Instead,
for any function $\rho:\SS^{n-1}\to(0,\infty)$, convex or not, one
defines the Alexandrov body (or Wulff shape) of $\rho$ as 
\[
A\left[\rho\right]=\left\{ x\in\RR^{n}:\ \left\langle x,\theta\right\rangle \le\rho(\theta)\text{ for all }\theta\in\SS^{n-1}\right\} .
\]
 In other words, $A[\rho]$ is the largest convex body with $h_{A[\rho]}\le\rho$.
In particular for every convex body $K$ we have $A[h_{K}]=K$. Then
one can define 
\[
K+_{p}t\cdot L=A\left[\left(h_{K}^{p}+th_{L}^{p}\right)^{1/p}\right]
\]
 for any $p>0$. Using the saw called Alexandrov Lemma, one can verify
that \eqref{eq:p-variation} remains true for $0<p<1$. Furthermore,
as Schneider observes in \cite{Schneider2013} (see Theorem 9.2.1),
the existence part of Theorem \ref{thm:p-Minkowski-classic} continues
to hold in this case, with the same proof. However, for $p<1$, the
uniqueness problem is highly non-trivial. In fact it was proved by
Böröczky, Lutwak, Yang and Zhang (\cite{Boroczky2012}) that this
uniqueness problem is equivalent to the so called $L^{p}$-Brunn-Minkowski
conjecture, a major open problem in convex geometry. While this relation
was one of our original motivation to study $L^{p}$-surface area
measures it will not play any role in the sequel, so we will not give
any further details. 

We will be interested in functional $L^{p}$-addition and functional
$L^{p}$-surface area measures. The definitions are fairly straightforward:
\begin{defn}
Let $\psi:\RR^{n}\to(-\infty,\infty]$ be a lower semi-continuous
function (which may or may not be convex). The Alexandrov Function
of $\psi$ is $f=A\left[\psi\right]=e^{-\psi^{\ast}}.$ 
\end{defn}

Note that $h_{f}=\psi^{\ast\ast}$ , so $f$ is the largest log-concave
function with $h_{f}\le\psi$ in analogy to the classical theory.
We then define: 
\begin{defn}
Fix $p>0$ and fix functions $f,g\in\lc_{n}$ with $h_{f},h_{g}\ge0$.
The $L^{p}$-combination $f\star_{p}t\cdot g$ is defined by
\[
f\star_{p}t\cdot g=A\left[\left(h_{f}^{p}+th_{g}^{p}\right)^{1/p}\right].
\]
\end{defn}

Let us compute the first variation of $\int\left(f\star_{p}t\cdot g\right)$.
Unlike the case $p=1$ we will not do it rigorously under minimal
assumptions, but use \eqref{eq:legendre-variation-general} to derive
the answer formally assuming sufficient regularity:
\begin{align*}
\left.\frac{\dd}{\dd t}\right|_{t=0^{+}}\int\left(f\star_{p}t\cdot g\right) & =\int\left.\frac{\dd}{\dd t}\right|_{t=0^{+}}\left(f\star_{p}t\cdot g\right)\\
 & =\int\left.\frac{\dd}{\dd t}\right|_{t=0^{+}}\exp\left(-\left[\left(h_{f}^{p}+th_{g}^{p}\right)^{1/p}\right]^{\ast}\right)\\
 & =-\int\left(e^{-\phi}\cdot\left.\frac{\dd}{\dd t}\right|_{t=0^{+}}\left[\left(h_{f}^{p}+th_{g}^{p}\right)^{1/p}\right]^{\ast}\right)\\
 & =\int\left(e^{-\phi}\cdot\left(\left.\frac{\dd}{\dd t}\right|_{t=0^{+}}\left[\left(h_{f}^{p}+th_{g}^{p}\right)^{1/p}\right]\circ\nabla\phi\right)\right)\\
 & =\frac{1}{p}\int e^{-\phi}\cdot h_{f}^{1-p}\left(\nabla\phi\right)\cdot h_{g}^{p}\left(\nabla\phi\right)=\frac{1}{p}\int h_{g}^{p}h_{f}^{1-p}\dd S_{f}.
\end{align*}
 As expected, the result is completely analogous to the case of convex
bodies, so we make the following definition:
\begin{defn}
For $f\in\lc_{n}$ with $0<\int f<\infty$ and $0<p<1$ we define
the $L^{p}$-surface area measure of $f$ to be $S_{f,p}=h_{f}^{1-p}\dd S_{f}$. 
\end{defn}

\begin{rem}
\label{rem:surface-support}For $S_{f,p}$ to be well defined and
not identically equal to $+\infty$ we should verify that $h_{f}(x)<\infty$
for $S_{f}$-almost every $x$. This is true since at every point
$x\in\RR^{n}$ where $f(x)>0$ and $\phi(x)=-\log f(x)$ is differentiable
we have by Lemma \ref{lem:unique-maximizer} 
\[
\phi(x)=\left\langle x,\nabla\phi(x)\right\rangle -h_{f}\left(\nabla\phi(x)\right),
\]
 so in particular $h_{f}\left(\nabla\phi(x)\right)<\infty$. Hence
\[
S_{f}\left(\left\{ y:\ h_{f}(y)=\infty\right\} \right)=\int\oo_{\left\{ h_{f}\left(\nabla\phi(x)\right)=\infty\right\} }f(x)\dd x=0,
\]
 so $S_{f}$-almost everywhere we have $h_{f}<\infty$. 
\end{rem}

The second major goal of this paper is to prove a Minkowski existence
theorem for functional $L^{p}$-surface area measures. More concretely
we will prove the following:
\begin{thm}
\label{thm:lp-minkowski}Fix $0<p<1$. Let $\mu$ be an \textbf{even}
finite Borel measure on $\RR^{n}$. Assume that:
\begin{enumerate}
\item $\int\left|x\right|\dd\mu<\infty$ (and then of course $\mu$ is centered,
as it is even)
\item $\mu$ is not supported on any hyperplane
\end{enumerate}
Then there exists $c>0$ and an even function $f\in\lc_{n}$ with
$h_{f}\ge0$ such that $S_{f,p}=c\cdot\mu$.
\end{thm}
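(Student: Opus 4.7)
My plan is to mimic the variational strategy of Lutwak's proof of Theorem \ref{thm:p-Minkowski-classic} and the functional adaptation of Cordero-Erausquin--Klartag. For even $\psi\in\cvx_n$ with $\psi\geq 0$, consider the functional
\[
\Phi(\psi) \;=\; \frac{1}{p}\int_{\RR^n}\psi^{p}\,\dd\mu \;-\; \log\int_{\RR^n} e^{-\psi^{\ast}}\,\dd x.
\]
If $\psi_0$ is a minimizer and we set $f_0=e^{-\psi_0^{\ast}}$, a first-variation computation -- using Proposition \ref{prop:pointwise-der} on the log-integral term -- should yield, for every admissible even test function $\alpha$,
\[
0 \;=\; \int h_{f_0}^{p-1}\alpha\,\dd\mu \;-\; \frac{1}{\int f_0}\int\alpha\,\dd S_{f_0}.
\]
A density argument in $\alpha$ then gives $S_{f_0,p} = h_{f_0}^{1-p}\dd S_{f_0} = (\int f_0)\,\dd\mu$, which is exactly the desired conclusion with $c=\int f_0>0$. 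So the entire proof reduces to producing such a minimizer and rigorously justifying this variational identity.

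For existence I would argue coercivity from both extremes. If $\psi\to 0$ on compact sets then $\psi^{\ast}\to +\infty$ away from the origin and $\int e^{-\psi^{\ast}}\to 0$, forcing $\Phi\to+\infty$. Conversely, if $\psi$ blows up then $\int\psi^{p}\,\dd\mu\to\infty$: since $\psi$ is convex and even with $\psi(0)\ge 0$, one has $\psi(x)\ge c|x|$ on every ray along which $\psi$ is unbounded, and the assumptions $\int|x|\,\dd\mu<\infty$ and that $\mu$ is not supported on any hyperplane together prevent $\psi$ from growing only in directions of $\mu$-negligible measure. Combining these, a Blaschke-type selection on the nested sub-level sets of a minimizing sequence produces a pointwise limit $\psi_0$, and the lower semicontinuity of the functional (Fatou for the $L^p(\mu)$-term, and a Proposition \ref{prop:lc-int-conv}-type argument for the log-integral term, applied to $f_k=e^{-\psi_k^{\ast}}$) shows $\Phi(\psi_0)=\inf\Phi$.

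The technical crux is the interchange of derivative and integral at $\psi_0$ in the log-integral term. Proposition \ref{prop:pointwise-der} gives pointwise differentiability of $e^{-(\psi+t\alpha)^{\ast}}(x)$, and the interchange should be carried out exactly as in Lemma \ref{lem:var-linear-growth}: truncate $\alpha$ to obtain an upper bound of the form $m|y|+c$ and a uniform lower bound, then invoke the sandwich Lemma \ref{lem:fatou-comparison} with Proposition \ref{prop:first-variation-ball} controlling the extreme case. This forces me to verify essential continuity of $f_0$, which I would establish directly from the Euler--Lagrange identity: any $\HH^{n-1}$-mass of $f_0$ on $\partial\overline{\{f_0>0\}}$ would, via Theorem \ref{thm:coarea}, produce a singular contribution to $S_{f_0}$ concentrated on a convex hypersurface, and via $S_{f_0,p}=c\mu$ this would force $\mu$ to charge a hyperplane -- contradicting the hypothesis.

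The anticipated main obstacles are twofold. First, unlike in the $p=1$ case of Theorem \ref{thm:1-lc-minkowski}, the functional $\Phi$ is neither convex nor concave when $0<p<1$: $\psi\mapsto\int\psi^{p}\,\dd\mu$ is concave (since $t\mapsto t^{p}$ is concave on $[0,\infty)$), while $\psi\mapsto -\log\int e^{-\psi^{\ast}}$ is convex by Prékopa--Leindler. Consequently, convex-duality tools are unavailable and the compactness argument must be carried out by bare hands. Second, the minimizer may sit on the boundary of the feasible set $\{\psi\geq 0,\ \psi\text{ even}\}$, so admissible variations are one-sided; producing a genuine two-sided identity requires restricting to test $\alpha$ for which $\psi_0+t\alpha\geq 0$ on both sides of $t=0$ and then exploiting the density of such $\alpha$ in a sufficiently rich class to conclude $S_{f_0,p}=c\,\mu$ as measures.
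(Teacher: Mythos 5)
There is a fatal gap at the very first step: the unconstrained functional $\Phi(\psi)=\frac{1}{p}\int\psi^{p}\dd\mu-\log\int e^{-\psi^{\ast}}$ is not bounded below when $0<p<1$, so it has no minimizer. Indeed, take $\psi_{R}(y)=R+\epsilon\left|y\right|$ with $\epsilon$ fixed. Then $\psi_{R}^{\ast}=-R+\oo_{\epsilon B_{2}^{n}}^{\infty}$, so $-\log\int e^{-\psi_{R}^{\ast}}=-R-n\log\epsilon-\log\left|B_{2}^{n}\right|$, while by subadditivity of $t\mapsto t^{p}$ one has $\frac{1}{p}\int\psi_{R}^{p}\dd\mu\le\frac{1}{p}\left(R^{p}\mu(\RR^{n})+\epsilon^{p}\int\left|y\right|^{p}\dd\mu\right)$. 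Hence $\Phi(\psi_{R})\le C_{\epsilon,\mu}+\frac{1}{p}R^{p}\mu(\RR^{n})-R\to-\infty$ as $R\to\infty$. This is precisely the obstruction created by the sublinearity of $t^{p}$: for $p=1$ (after normalizing $\mu$ to be a probability measure) the functional is invariant under $\psi\mapsto\psi+R$, which is why the Cordero-Erausquin--Klartag scheme works unconstrained, but for $p<1$ the $L^{p}(\mu)$ term cannot balance the linear drift of the $\log$-integral term. The paper circumvents exactly this by solving the \emph{constrained} problem of Proposition \ref{prop:lp-minimizer} (minimize $\int\psi^{p}\dd\mu$ subject to $\int e^{-\psi^{\ast}}\ge a$), and the Lagrange-multiplier nature of the resulting Euler--Lagrange equation is also why one can only conclude $S_{f,p}=c\cdot\mu$ rather than $S_{f,p}=\mu$. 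Your coercivity discussion ("if $\psi$ blows up then $\int\psi^{p}\dd\mu\to\infty$") does not save this, because the issue is the \emph{rate} of blow-up, not its occurrence.

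Two further points. First, the detour through essential continuity of $f_{0}$ is both unnecessary and unsupported: the admissible perturbations here are \emph{bounded}, and for a bounded continuous $v$ the two-sided identity $\left.\frac{\dd}{\dd t}\right|_{t=0}\int e^{-(\psi+tv)^{\ast}}=\int v\dd S_{f}$ holds for every $f\in\lc_{n}$ with $0<\int f<\infty$ by plain dominated convergence (since $e^{-tM}f\le f_{t}\le e^{tM}f$); this is Proposition \ref{prop:bounded-variation}, and no sandwich argument or co-area formula is needed. Moreover your claimed implication that boundary mass of $f_{0}$ would "force $\mu$ to charge a hyperplane" does not follow from the definitions: $S_{f_{0}}$ is a push-forward of $f_{0}\dd x$ and does not directly see $\HH^{n-1}$-mass on $\partial K$. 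Second, you correctly flag that the constraint $\psi\ge0$ may bind and that $\psi^{p-1}$ degenerates where $\psi$ vanishes, but you do not resolve it; the paper does so by proving $\psi_{0}(0)>0$ for $a$ large (Lemma \ref{lem:zero-value}), whence $\psi_{0}\ge\delta>0$ by evenness and convexity, and by perturbing in the direction $v\psi^{1-p}$ so that the weight $\psi^{p-1}$ cancels identically in the first-order condition. Without an argument for $\inf\psi_{0}>0$, both the two-sided variation and the substitution $\alpha=v\psi^{1-p}$ in your density step are unjustified.
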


If again we assume that $\frac{\dd\mu}{\dd x}=g$ for some smooth
function $g$ and that the solution $f=e^{-\phi}$ to $S_{f,p}=c\cdot\mu$
is also smooth, then $\phi$ solves the Monge-Ampère type differential
equation

\[
c\cdot g\left(\nabla\phi(x)\right)\cdot\det\left(\nabla^{2}\phi(x)\right)=\left(\phi^{\ast}(x)\right)^{1-p}e^{-\phi(x)}.
\]

Note that we claim nothing about the uniqueness of $f$. As was explained
above this is a much more delicate issue that we will not address
here. Also note that we only prove the result for even measures $\mu$,
and we can only deduce that $S_{f,p}$ coincides with the measure
$\mu$ up to a constant $c>0$. This is very similar to the case $p=n$
of Theorem \ref{thm:p-Minkowski-classic}, and happens for essentially
the same reasons.

We will prove Theorem \ref{thm:lp-minkowski} in the next section.

\section{\label{sec:Lp-existence}A functional $L^{p}$ Minkowski's existence
theorem}

In this section we prove Theorem \ref{thm:lp-minkowski}. Not surprisingly,
we will find the function $f$ we are looking for by solving a certain
optimization problem. Unlike the proofs of Theorems \ref{thm:1-lc-minkowski}
and \ref{thm:p-Minkowski-classic} however, it will be much more convenient
to work with a \emph{constrained} optimization problem. First we will
need a result guaranteeing the existence of a minimizer:
\begin{prop}
\label{prop:lp-minimizer}Assume $\mu$ satisfies the assumptions
of Theorem \ref{thm:lp-minkowski}. Fix $0<p<1$, and consider the
minimization problem 
\[
\min\left\{ \int\psi^{p}\dd\mu:\ \begin{array}{l}
\psi:\RR^{n}\to[0,\infty]\text{ is even,}\\
\text{measurable and }\int e^{-\psi^{\ast}}\ge a
\end{array}\right\} .
\]
If $a>0$ is large enough then the minimum is attained for a lower
semi-continuous convex function $\psi_{0}$. Moreover, $\psi_{0}(0)>0$
and $\int e^{-\psi_{0}^{\ast}}=a$. 
\end{prop}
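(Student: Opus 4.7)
I would apply the direct method of the calculus of variations. First, reduce to convex, lsc, even, nonnegative $\psi$: for any admissible measurable $\psi\ge0$ its convex envelope $\psi^{\ast\ast}$ is still even, nonnegative and lsc, satisfies $\psi^{\ast\ast}\le\psi$ pointwise, and $(\psi^{\ast\ast})^{\ast}=\psi^{\ast}$, so the objective weakly decreases while the constraint is preserved. Finiteness of the infimum follows by testing with $\psi(x)=c|x|$ for $c=(a/|B_{2}^{n}|)^{1/n}$: its Legendre transform equals $0$ on $cB_{2}^{n}$ and $+\infty$ elsewhere, so $\int e^{-\psi^{\ast}}=|cB_{2}^{n}|=a$, and H\"older applied to $\int|x|\,\dd\mu<\infty$ gives $\int|x|^{p}\,\dd\mu<\infty$, so the objective is finite.

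\textbf{Compactness.} The crux is to produce a convergent subsequence from a minimizing sequence $\psi_{k}$. Since $\psi_{k}(0)=\min\psi_{k}$ by convexity and evenness, the bound $\int\psi_{k}^{p}\,\dd\mu\le M$ yields $\psi_{k}(0)\le(M/\mu(\RR^{n}))^{1/p}$. The key uniform upper bound is that $\psi_{k}\le\lambda_{0}$ on a fixed ball $r_{0}B_{2}^{n}$ for all $k$. By Chebyshev the symmetric convex set $F_{k}=\{\psi_{k}\le\lambda_{0}\}$ satisfies $\mu(F_{k}^{c})\le M/\lambda_{0}^{p}$. The hypothesis that $\mu$ is not supported on any hyperplane, combined with the lower semi-continuity of $\theta\mapsto\mu(\RR^{n}\setminus\theta^{\perp})$ and compactness of $\SS^{n-1}$, yields $r_{0},\eta>0$ with $\mu(\{x:|\langle x,\theta\rangle|>r_{0}\})\ge\eta$ uniformly for $\theta\in\SS^{n-1}$. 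Choosing $\lambda_{0}$ so that $M/\lambda_{0}^{p}<\eta$, $F_{k}$ meets $\{\langle x,\theta\rangle>r_{0}\}$ in every direction, so $h_{F_{k}}\ge r_{0}$ on $\SS^{n-1}$ and $F_{k}\supseteq r_{0}B_{2}^{n}$. Dually, $\psi_{k}^{\ast}(y)\ge r_{0}|y|-\lambda_{0}$, so $e^{-\psi_{k}^{\ast}}\le e^{\lambda_{0}-r_{0}|y|}$ is dominated by an integrable function.

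\textbf{Limit and tightness.} Standard compactness of convex-lsc epigraphs (or of log-concave functions with a common integrable majorant) gives a subsequence epi-converging to a convex, lsc, even $\psi_{0}\ge0$, and $e^{-\psi_{k}^{\ast}}\to e^{-\psi_{0}^{\ast}}$ almost everywhere. Dominated convergence and Proposition \ref{prop:lc-int-conv} give $\int e^{-\psi_{0}^{\ast}}\ge a$; epi-convergence gives $\psi_{0}(x)\le\liminf_{k}\psi_{k}(x)$ pointwise; Fatou yields $\int\psi_{0}^{p}\,\dd\mu\le\liminf\int\psi_{k}^{p}\,\dd\mu$, so $\psi_{0}$ attains the minimum. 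For tightness: if $\int e^{-\psi_{0}^{\ast}}>a$, then $\lambda\psi_{0}$ with $\lambda<1$ close to $1$ is still feasible (since $e^{-(\lambda\psi_{0})^{\ast}}$ converges monotonically up to $e^{-\psi_{0}^{\ast}}$ as $\lambda\nearrow1$) while having strictly smaller objective $\lambda^{p}\int\psi_{0}^{p}\,\dd\mu$, contradicting optimality.

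\textbf{Positivity at the origin and main obstacle.} Suppose for contradiction that $\psi_{0}(0)=0$. The family $\psi_{\lambda}(x)=\psi_{0}(x/\lambda)+n\log\lambda$ for $\lambda>1$ keeps the constraint tight (a direct computation using $\psi_{\lambda}^{\ast}(y)=\psi_{0}^{\ast}(\lambda y)-n\log\lambda$ gives $\int e^{-\psi_{\lambda}^{\ast}}=\lambda^{-n}e^{n\log\lambda}a=a$) and has $\psi_{\lambda}(0)>0$, so is admissible. The first-order condition $\left.\frac{\dd}{\dd\lambda}\int\psi_{\lambda}^{p}\,\dd\mu\right|_{\lambda=1^{+}}\ge0$, together with the Euler-type inequality $x\cdot\nabla\psi_{0}(x)\ge\psi_{0}(x)$ (valid because $\psi_{0}$ is convex with minimum $0$ at the origin), reduces to $n\int\psi_{0}^{p-1}\,\dd\mu\ge\int\psi_{0}^{p}\,\dd\mu$. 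Quantitative comparison with the linear test function at $c=(a/|B_{2}^{n}|)^{1/n}$ — which forces $\int\psi_{0}^{p}\,\dd\mu\le(a/|B_{2}^{n}|)^{p/n}\int|x|^{p}\,\dd\mu$ and lets one analogously bound $\int\psi_{0}^{p-1}\,\dd\mu$ from above — shows this inequality fails once $a$ exceeds an explicit threshold, giving the contradiction. The main obstacle is the compactness step, where an integral $L^{p}$-bound on $\psi_{k}$ is converted into a uniform pointwise bound on a fixed neighborhood of the origin: this is precisely where the non-hyperplane support hypothesis on $\mu$ is used essentially, via a compactness argument over $\SS^{n-1}$ to obtain uniformity in direction.
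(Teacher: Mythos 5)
Most of your argument tracks the paper: the reduction to $\psi^{\ast\ast}$, the compactness step (which is exactly the content of the paper's Lemma \ref{lem:p-compactness}, i.e.\ the adaptation of Lemma 17 of Cordero-Erausquin--Klartag), and the tightness argument (you rescale multiplicatively by $\lambda<1$ where the paper subtracts $\epsilon$; both work) are all sound. The genuine gap is in the step $\psi_{0}(0)>0$, and it is not a repairable detail but a wrong strategy. Your dilation family $\psi_{\lambda}(x)=\psi_{0}(x/\lambda)+n\log\lambda$ produces, after the Euler inequality $\left\langle x,\nabla\psi_{0}(x)\right\rangle \ge\psi_{0}(x)$, the condition $n\int\psi_{0}^{p-1}\dd\mu\ge\int\psi_{0}^{p}\dd\mu$. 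But since $p-1<0$ and you are assuming $\psi_{0}(0)=0$, the weight $\psi_{0}^{p-1}$ blows up precisely where $\psi_{0}$ vanishes; if $\mu$ charges a neighborhood of the origin (e.g.\ $\mu$ Gaussian), $\int\psi_{0}^{p-1}\dd\mu$ is huge or $+\infty$, the inequality holds trivially, and no contradiction is available. The claim that one can ``analogously bound $\int\psi_{0}^{p-1}\dd\mu$ from above'' is exactly what fails: an upper bound on this integral would require a lower bound on $\psi_{0}$ near $0$, which is the very thing you are trying to prove. (The differentiation under the integral sign at $\lambda=1^{+}$ is also unjustified for the same singular reason.)

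There is a second, compounding problem: your comparison function $c\left|x\right|$ with $c=(a/\left|B_{2}^{n}\right|)^{1/n}$ has objective value of order $a^{p/n}$, which is the \emph{same} order as the lower bound one can prove for admissible functions vanishing at the origin, so it cannot separate the two cases. The paper's mechanism is different and essential: the test function is $\log a+c_{n}\left|y\right|$, whose positive value $\log a$ at the origin lets it meet the constraint $\int e^{-\psi^{\ast}}=a$ at objective cost only $\log a+C_{\mu}$, while Lemma \ref{lem:zero-value} shows that \emph{any} admissible $\psi$ with $\psi(0)=0$ must pay at least $c_{\mu}a^{p/n}-C_{\mu}$ (because the constraint forces the sublevel set $\{\psi\le1\}$ to be thin in some direction $\theta_{0}$, hence $\psi(y)+1\gtrsim a^{1/n}\left|\left\langle y,\theta_{0}\right\rangle \right|$, and $\mu$ not being supported on a hyperplane makes $\int\left|\left\langle y,\theta_{0}\right\rangle \right|^{p}\dd\mu$ uniformly positive). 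Since $a^{p/n}\gg\log a$, a minimizer cannot vanish at the origin for large $a$. You need this $\log a$ versus $a^{p/n}$ separation, or some substitute for Lemma \ref{lem:zero-value}; the scaling/first-order-condition route does not supply it.
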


In order to prove this proposition we will need two lemmas, which
are both variants of lemmas from \cite{Cordero-Erausquin2015}. First
let us state our version of Lemma 15 from this paper:
\begin{lem}
\label{lem:zero-value}Assume $\mu$ satisfies the assumptions of
Theorem \ref{thm:lp-minkowski}. Let $\psi:\RR^{n}\to[0,\infty]$
be an even lower semi-continuous convex function with $\psi(0)=0$.
Write $\phi=\psi^{\ast}$ and fix $0<p<1$. Then 
\[
\int\psi^{p}\dd\mu\ge c_{\mu}\left(\int e^{-\phi}\right)^{\frac{p}{n}}-C_{\mu}
\]
 for $c_{\mu},C_{\mu}>0$ that depend on $\mu$ and $p$ but not on
$\psi$.
\end{lem}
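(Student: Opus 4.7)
The main idea is to compare $\psi$ with the Minkowski gauge $g(x) = \inf\{t > 0 : x/t \in K_1\}$ of the symmetric convex set $K_1 = \{\psi \le 1\}$, thereby reducing the problem to a $1$-homogeneous estimate. The function $g$ is $1$-homogeneous, convex and even, and since $K_1$ is symmetric we have $g = h_{K_1^\circ}$.

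First I would establish the pointwise bound $g(x) \le \max(1, \psi(x))$. The key observation is that for convex $\psi$ with $\psi(0) = 0$, the map $s \mapsto \psi(sy)/s$ is non-decreasing on $s > 0$. Applied with $y = x/\psi(x)$ and the values $s = 1$ and $s = \psi(x)$, this yields $\psi(x/\psi(x)) \le 1$ when $\psi(x) > 1$, i.e., $g(x) \le \psi(x)$; the case $\psi(x) \le 1$ gives $g(x) \le 1$ directly. Using subadditivity $(a+b)^p \le a^p + b^p$ for $p \in (0,1)$, this yields
\[
\int g^p\,\dd\mu \le \mu(\RR^n) + \int \psi^p\,\dd\mu.
\]

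The main step is to bound $\int g^p\,\dd\mu$ below by a constant times $|K_1^\circ|^{p/n}$. Since $g(x) = \sup_{y \in K_1^\circ} |\langle x, y\rangle|$, for every $y \in K_1^\circ$ we have $g(x)^p \ge |\langle x, y\rangle|^p$. Define
\[
c_0 := \inf_{\omega \in \SS^{n-1}} \int_{\RR^n} |\langle x, \omega\rangle|^p\,\dd\mu(x).
\]
I would verify that $c_0 > 0$: continuity of $\omega \mapsto \int |\langle x, \omega\rangle|^p\,\dd\mu$ follows from dominated convergence using $|x|^p \le 1 + |x|$ together with the assumption $\int|x|\,\dd\mu < \infty$; strict positivity for each individual $\omega$ uses that $\mu$ is not supported on the hyperplane $\omega^\perp$; and the infimum is attained by compactness of $\SS^{n-1}$. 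Choosing $y_0 \in K_1^\circ$ of maximal norm $R$, the inclusion $K_1^\circ \subseteq R\,B_2^n$ gives $R \ge (|K_1^\circ|/|B_2^n|)^{1/n}$, and writing $y_0 = R\omega_0$ yields
\[
\int g^p\,\dd\mu \ge c_0\,|B_2^n|^{-p/n}\,|K_1^\circ|^{p/n}.
\]

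Finally, to upper-bound $\int e^{-\phi}$, the fact that $\psi \le 1$ on $K_1$ forces $\phi(y) \ge h_{K_1}(y) - 1$, so $\int e^{-\phi} \le e\int e^{-h_{K_1}(y)}\,\dd y = e\cdot n!\,|K_1^\circ|$ by the classical identity $\int e^{-h_K(y)}\,\dd y = n!|K^\circ|$ (immediate from the layer-cake representation since $h_K = \|\cdot\|_{K^\circ}$). Stringing the three estimates together proves the lemma with $c_\mu = c_0|B_2^n|^{-p/n}(n!e)^{-p/n}$ and $C_\mu = \mu(\RR^n)$. The one subtle point is when $K_1$ has empty interior (for instance when it is contained in a proper subspace), so that $|K_1^\circ| = +\infty$; in that case a limiting variant of the second step shows $\int g^p\,\dd\mu = +\infty$ using the non-degeneracy of $\mu$, and the desired inequality holds trivially. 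I do not foresee a serious obstacle beyond the bookkeeping needed for this degenerate case — the argument is essentially a compactness/non-degeneracy computation applied after homogenizing $\psi$.
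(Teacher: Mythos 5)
Your proposal is correct, and the skeleton is the same as the paper's: both arguments locate the ``thinnest'' direction of the symmetric sublevel set $K_{1}=\left\{ \psi\le1\right\} $ and exploit the non-degeneracy of $\mu$ through the quantity $\inf_{\omega\in\SS^{n-1}}\int\left|\left\langle x,\omega\right\rangle \right|^{p}\dd\mu$, which is positive by continuity, compactness, and the assumption that $\mu$ charges the complement of every hyperplane. (Your $R=\max_{y\in K_{1}^{\circ}}\left|y\right|$ is exactly $1/r$ for the paper's $r=\min_{\theta}h_{K_{1}}(\theta)$, and your gauge inequality $g\le\max(1,\psi)$ is a cleaner packaging of the paper's pointwise bound $\psi(y)+1\ge\left|\left\langle y,\theta_{0}\right\rangle \right|/(2r)$.) Where you genuinely diverge is in relating $\int e^{-\phi}$ to $R$: the paper first shows $\int e^{-\psi}\ge e^{-1}\left|K_{1}\right|\ge c_{n}r^{n}/e$ and then invokes the functional Blaschke--Santaló inequality $\int e^{-\psi}\cdot\int e^{-\psi^{\ast}}\le(2\pi)^{n/2}$, whereas you use the elementary dual bound $\phi=\psi^{\ast}\ge h_{K_{1}}-1$ together with the exact identity $\int e^{-h_{K}}=n!\left|K^{\circ}\right|$ and $\left|K_{1}^{\circ}\right|\le R^{n}\left|B_{2}^{n}\right|$. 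Your route avoids Blaschke--Santaló entirely and is in that sense more elementary and self-contained; the paper's route avoids polar bodies and is marginally shorter given that it cites Santaló anyway. Two small points of bookkeeping: your parenthetical claim that $\left|K_{1}^{\circ}\right|=+\infty$ whenever $K_{1}$ has empty interior is not literally true (if $K_{1}$ is unbounded inside a hyperplane then $\left|K_{1}^{\circ}\right|=0$), but this is harmless because your fallback argument in that case --- $g\equiv+\infty$ off a hyperplane, hence $\int g^{p}\dd\mu=\infty$ and the inequality is vacuous --- is the one that actually carries the weight; and in the non-degenerate case the maximizer $y_{0}$ exists because a symmetric convex $K_{1}$ with non-empty interior contains $0$ in its interior, so $K_{1}^{\circ}$ is compact. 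Neither point is a gap.
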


For completeness we provide a proof of the lemma. The fact that we
only deal with even functions and measures makes the proof shorter
than the corresponding proof in \cite{Cordero-Erausquin2015}:
\begin{proof}
If $\int\psi^{p}\dd\mu=\infty$ there is nothing to prove, so we may
assume that $\int\psi^{p}\dd\mu<\infty$. Therefore $\psi$ is finite
on the support of $\mu$, and since $\psi$ is convex it is also finite
on its convex hull $K=\text{conv}(\text{supp}(\mu))$. By our assumptions
on $\mu$ the body $K$ is an origin symmetric convex body with non-empty
interior, so it must contain $0$ in its interior. In particular $\psi$
is bounded in a neighborhood of $0$. If $\left|\psi(y)\right|\le M$
for $\left|y\right|\le\delta$ then 
\[
\phi(x)=\sup_{y\in\RR^{n}}\left[\left\langle x,y\right\rangle -\psi(y)\right]\ge\left\langle x,\frac{\delta x}{\left|x\right|}\right\rangle -\psi\left(\frac{\delta x}{\left|x\right|}\right)\ge\delta\left|x\right|-M,
\]
 So in particular $\int e^{-\phi}<\infty$. If $\int e^{-\phi}=0$
again there is nothing to prove, so we may assume $0<\int e^{-\phi}<\infty$.
By the Blaschke-Santaló inequality (see \cite{Artstein-Avidan2004})
it follows that 
\[
\int e^{-\psi}\cdot\int e^{-\phi}\le\left(2\pi\right)^{\frac{n}{2}}.
\]

Next we define 
\[
K=\left\{ y\in\RR^{n}:\ \psi(y)\le1\right\} .
\]
 We also define $r=\min_{\theta\in\SS^{n-1}}h_{K}(\theta)$, and let
$\theta_{0}\in\SS^{n-1}$ be the direction in which this minimum is
attained. It follows that
\[
\frac{\left(2\pi\right)^{n/2}}{\int e^{-\phi}}\ge\int_{\RR^{n}}e^{-\psi}\ge\int_{K}e^{-\psi}\ge\frac{1}{e}\left|K\right|\ge\frac{1}{e}\left|rB^{n}\right|=c_{n}\cdot r^{n},
\]
 so $r\le C_{n}\cdot\left(\int e^{-\phi}\right)^{-1/n}$. Here and
everywhere else $c_{n},C_{n}>0$ are some constants that depend only
on the dimension $n$. 

For every $y\in K$ we have $r=h_{K}(\theta_{0})\ge\left|\left\langle y,\theta_{0}\right\rangle \right|$.
Equivalently, if $\left|\left\langle y,\theta_{0}\right\rangle \right|>r$
then $x\notin K$, so $\psi(y)>1$. It follows that if $2r\le\left|\left\langle y,\theta_{0}\right\rangle \right|$
then 
\[
1<\psi\left(\frac{2r}{\left|\left\langle y,\theta_{0}\right\rangle \right|}y\right)=\psi\left(\left(1-\frac{2r}{\left|\left\langle y,\theta_{0}\right\rangle \right|}\right)\cdot0+\frac{2r}{\left|\left\langle y,\theta_{0}\right\rangle \right|}\cdot y\right)\le\frac{2r}{\left|\left\langle y,\theta_{0}\right\rangle \right|}\psi(y),
\]
 so $\psi(y)+1\ge\psi(y)\ge\frac{\left|\left\langle y,\theta_{0}\right\rangle \right|}{2r}$.
Obviously if $\left|\left\langle y,\theta_{0}\right\rangle \right|<2r$
this inequality still holds trivially, so it holds for every $y\in\RR^{n}$.
Hence 
\[
\int(\psi^{p}+1)\dd\mu\ge\int(\psi+1)^{p}\dd\mu\ge\frac{1}{\left(2r\right)^{p}}\int\left|\left\langle y,\theta_{0}\right\rangle \right|^{p}\dd\mu.
\]
 The function $\theta\mapsto\int\left|\left\langle y,\theta\right\rangle \right|^{p}\dd\mu(y)$
is continuous on $\SS^{n-1}$ by the dominated convergence theorem,
and is strictly positive since $\mu$ is not supported on any hyperplane.
Hence it has a positive minimum which we may denote by $\widetilde{c}_{\mu}$.
It follows that 
\[
\int(\psi^{p}+1)\dd\mu\ge\frac{\widetilde{c}_{\mu}}{(2r)^{p}}\ge c_{\mu}\left(\int e^{-\phi}\right)^{\frac{p}{n}},
\]
 completing the proof.
\end{proof}
The second lemma we will need is a variant of Lemma 17 from \cite{Cordero-Erausquin2015}:
\begin{lem}
\label{lem:p-compactness}Assume $\mu$ satisfies the assumptions
of Theorem \ref{thm:lp-minkowski}. Let $\left\{ \psi_{i}\right\} _{i=1}^{\infty}$
be non-negative, even, lower semi-continuous convex functions such
that 
\[
\sup_{i}\int\psi_{i}^{p}\dd\mu<\infty.
\]
 There there exists a subsequence $\left\{ \psi_{i_{j}}\right\} _{j=1}^{\infty}$
and an even lower semi-continuous convex function $\psi:\RR^{n}\to[0,\infty]$
such that 
\[
\int\psi^{p}\dd\mu\le\liminf_{j\to\infty}\int\psi_{i_{j}}^{p}\dd\mu\quad\text{and}\quad\int e^{-\psi^{\ast}}\ge\limsup_{j\to\infty}\int e^{-\psi_{i_{j}}^{\ast}}.
\]
 
\end{lem}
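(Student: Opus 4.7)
The plan is to pass to the associated log-concave functions $f_i := e^{-\psi_i^*}$, extract a pointwise convergent subsequence using uniform integral bounds, and then transfer convergence back to $\psi$ and $\psi^*$ via Proposition \ref{prop:lc-int-conv} and general properties of the Legendre transform.

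First I would establish uniform bounds. Evenness and convexity give $\psi_i(0) = \psi_i\bigl(\tfrac{y+(-y)}{2}\bigr) \leq \psi_i(y)$ for every $y$, so $\psi_i(0)^p\mu(\RR^n) \leq \int\psi_i^p\dd\mu$ yields a uniform bound $\psi_i(0) \leq C_0$. Applying Lemma \ref{lem:zero-value} to the shifted functions $\widetilde\psi_i := \psi_i - \psi_i(0)$ (which are still even, convex, non-negative with $\widetilde\psi_i(0) = 0$ and with uniformly bounded $\int\widetilde\psi_i^p\dd\mu$) yields a uniform bound on $\int e^{-\widetilde\psi_i^*} = e^{-\psi_i(0)}\int f_i$, hence a uniform bound on $\int f_i$. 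Since $f_i$ is log-concave and even its maximum is attained at $0$ with $f_i(0) = e^{\psi_i(0)} \in [1, e^{C_0}]$, so $f_i \leq e^{C_0}$ pointwise as well. Combining this uniform $L^\infty$ and $L^1$ control with log-concavity, a standard Blaschke-type selection extracts a subsequence $f_{i_j} \to f$ pointwise on $\RR^n$ for some log-concave, even $f$. I would then set $\phi := -\log f$ and $\psi := \phi^*$; this $\psi$ is automatically lsc, convex and even, and non-negativity follows because $\phi(0) = \lim_j \psi_{i_j}^*(0) = -\lim_j \psi_{i_j}(0) \leq 0$, giving $\psi(x) \geq \langle x,0\rangle - \phi(0) \geq 0$.

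For the first inequality, I would establish $\liminf_j \psi_{i_j}(x) \geq \psi(x)$ for every $x$. Indeed, since $\psi_{i_j}^* \to \phi$ pointwise, the general inequality $\liminf \sup \geq \sup \liminf$ gives
\[
\liminf_j \psi_{i_j}(x) = \liminf_j \sup_y \bigl[\langle x,y\rangle - \psi_{i_j}^*(y)\bigr] \geq \sup_y \bigl[\langle x,y\rangle - \phi(y)\bigr] = \phi^*(x) = \psi(x),
\]
so Fatou's lemma yields $\int\psi^p\dd\mu \leq \liminf_j \int\psi_{i_j}^p\dd\mu$. For the second inequality, Proposition \ref{prop:lc-int-conv} applied to $f_{i_j} \to f = e^{-\phi}$ gives $\int e^{-\psi_{i_j}^*} \to \int e^{-\phi}$, and since $\psi^* = \phi^{**} \leq \phi$ always holds we have $e^{-\psi^*} \geq e^{-\phi}$ pointwise and hence
\[
\int e^{-\psi^*} \geq \int e^{-\phi} = \lim_j \int e^{-\psi_{i_j}^*} \geq \limsup_j \int e^{-\psi_{i_j}^*}.
\]

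The main obstacle is executing the Blaschke-type selection so as to produce genuinely pointwise (not merely almost-everywhere) convergence of the $f_{i_j}$: one passes to the convex functions $\psi_{i_j}^* = -\log f_{i_j}$, uses log-concavity together with the uniform $L^\infty$ and $L^1$ bounds to control them on compacta (via their symmetric convex super-level sets), applies Helly's selection principle and a diagonal argument across rational points, and then invokes convexity to upgrade pointwise convergence on a dense set to pointwise convergence everywhere. All other steps are essentially formal manipulations with the Legendre transform.
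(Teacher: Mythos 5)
Your opening bounds are correct and your closing Legendre manipulations would be fine, but the compactness step they rest on is not available: from the uniform $L^{1}$ and $L^{\infty}$ control one cannot extract a subsequence of the $f_{i}$ converging pointwise \emph{everywhere}, and the proposed upgrade from ``pointwise convergence on a dense set'' to ``pointwise convergence everywhere'' via convexity fails exactly at boundary points of the limiting support. Concretely, in $\RR^{2}$ let $E_{i}\subseteq\SS^{1}$ be the origin-symmetric finite union of closed arcs on which the $i$-th binary digit of the (normalized) angle equals $1$, and set $K_{i}=\operatorname{conv}\left(\tfrac{1}{2}B_{2}^{2}\cup E_{i}\right)$ and $\psi_{i}=h_{K_{i}}$. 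These $\psi_{i}$ are even, non-negative, convex, satisfy $\tfrac{1}{2}\left|y\right|\le\psi_{i}(y)\le\left|y\right|$ and hence all your hypotheses and uniform bounds, and $f_{i}=e^{-\psi_{i}^{\ast}}=\oo_{K_{i}}$ has $f_{i}(0)=1$ and $\int f_{i}\le\pi$. Yet, since the extreme points of the disk give $K_{i}\cap\SS^{1}=E_{i}$, we have $f_{i}(u)=\oo_{E_{i}}(u)$ for $u\in\SS^{1}$, and along \emph{any} subsequence the binary digits fail to stabilize for almost every angle; so no subsequence of $(f_{i})$ converges at every point of $\RR^{2}$. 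What is extractable is convergence off a Lebesgue-null set (the boundaries of the limiting level sets), and both of your closing steps use everywhere-convergence essentially: the swap $\liminf_{j}\sup_{y}\ge\sup_{y}\liminf_{j}$ needs $\psi_{i_{j}}^{\ast}(y)\to\phi(y)$ for every $y$ (or at least for a set of $y$ that determines $\phi^{\ast}$), and Proposition \ref{prop:lc-int-conv} is invoked for pointwise convergence.

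The argument is repairable, but the repairs amount to the proof you are replacing (Lemma 17 of \cite{Cordero-Erausquin2015}, to which the paper defers). One first shows, via Markov's inequality applied to $\int\psi_{i}^{p}\dd\mu\le C$ together with the assumption that $\mu$ is not supported on a hyperplane, that the symmetric convex sets $\left\{ \psi_{i}\le R\right\}$ contain a fixed ball $B(0,r)$, whence $\psi_{i}\le R$ on $B(0,r)$ and $f_{i}(x)\le e^{R-r\left|x\right|}$ uniformly; this dominating function lets you replace Proposition \ref{prop:lc-int-conv} by (reverse) Fatou and tolerate an exceptional null set. For the first inequality one then restricts the supremum defining $\phi^{\ast}$ to the interior of $\dom\phi$ (legitimate for a convex $\phi$ when that interior is nonempty), and handles separately the degenerate case $\int f=0$, where an explicit constant $\psi$ already satisfies both conclusions. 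As written, the proposal asserts the key compactness statement rather than proving it, and that statement is false.
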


In fact, Lemma 17 of \cite{Cordero-Erausquin2015} is exactly the
same statement in the case $p=1$, and with the assumption of evenness
replaced with the assumption $\psi_{i}(0)=0$. The proofs are identical,
as writing the extra power $p$ everywhere doesn't affect the argument
in any way. The assumption $\psi_{i}(0)=0$ is only used in \cite{Cordero-Erausquin2015}
to know that $\psi_{i}(\lambda x)$ is increasing in $\lambda$. This
is trivial when $\psi_{i}$ is even, so this assumption may be omitted.
Since the proofs are otherwise identical we omit a proof of Lemma
\ref{lem:p-compactness}. 

Using these two lemma we can prove Proposition \ref{prop:lp-minimizer}:
\begin{proof}[Proof of Proposition \ref{prop:lp-minimizer}.]
Without loss of generality assume that $\mu$ is a probability measure.
Consider the function $\widetilde{\psi}(y)=\log a+c_{n}\left|y\right|$,
where the constant $c_{n}$ is chosen such that $\left|c_{n}B_{2}^{n}\right|=1$.
Then $\widetilde{\psi}^{\ast}(x)=-\log a+\oo_{c_{n}B_{2}^{n}}^{\infty}$,
so $\int e^{-\widetilde{\psi}^{\ast}}=a$. For $a\ge e$ we have $\widetilde{\psi}\ge1$,
so 
\[
\int\widetilde{\psi}^{p}\dd\mu\le\int\widetilde{\psi}\dd\mu=\log a+c_{n}\int\left|y\right|\dd\mu(y)=\log a+C_{\mu}.
\]
 In particular we see that 
\[
m=\inf\left\{ \int\psi^{p}\dd\mu:\ \begin{array}{l}
\psi:\RR^{n}\to[0,\infty]\text{ is even,}\\
\text{measurable and }\int e^{-\psi^{\ast}}\ge a
\end{array}\right\} <\infty.
\]

Next we choose a sequence $\left\{ \psi_{i}\right\} _{i=1}^{\infty}$
of even, measurable functions with $\int e^{-\psi_{i}^{\ast}}\ge a$
and such that $\int\psi_{i}^{p}\dd\mu\to m$. By replacing each $\psi_{i}$
with its second Legendre conjugate $\psi_{i}^{\ast\ast}$ we may assume
the functions $\left\{ \psi_{i}\right\} $ are all lower semi-continuous
and convex. Obviously $\int\psi_{i}^{p}\dd\mu<m+1$ for all but finitely
many values of $i$. Hence we can apply Lemma \ref{lem:p-compactness}
and find a subsequence $\left\{ \psi_{i_{j}}\right\} $ and an even
lower semi-continuous convex function $\psi:\RR^{n}\to[0,\infty]$
such that 
\[
\int\psi^{p}\dd\mu\le\liminf_{j\to\infty}\int\psi_{i_{j}}^{p}\dd\mu=m
\]
 and 
\[
\int e^{-\psi^{\ast}}\ge\limsup_{j\to\infty}\int e^{-\psi_{i_{j}}^{\ast}}\ge a.
\]
 It follows that $\int\psi^{p}\dd\mu=m$ and therefore $\psi$ is
the minimizer we were looking for. 

Assume by contradiction that $\psi(0)=0$. Then by Lemma \ref{lem:zero-value}
we have 
\[
\int\psi^{p}\dd\mu\ge c_{\mu}\left(\int e^{-\psi^{\ast}}\right)^{\frac{p}{n}}-C_{\mu}\ge c_{\mu}a^{\frac{p}{n}}-C_{\mu}.
\]
Therefore for large enough $a>0$ we have $\int\psi^{p}\dd\mu>\int\widetilde{\psi}^{p}\dd\mu$,
which is a contradiction to the minimality of $\psi$. Hence $\psi(0)>0$
for $a>0$ large enough. 

Finally, assume by contradiction that $\int e^{-\psi^{\ast}}>a$.
Since $\psi(0)>0$ the function $\psi_{\epsilon}=\psi-\epsilon$ is
non-negative for small enough $\epsilon>0$. Since $\int e^{-\psi_{\epsilon}^{\ast}}=e^{-\epsilon}\int e^{-\psi^{\ast}}$
we see that $\psi_{\epsilon}$ is in our domain for small enough $\epsilon>0$.
But this is impossible since $\psi$ is a minimizer and $\int\psi_{\epsilon}^{p}\dd\mu<\int\psi^{p}\dd\mu$.
This complete the proof. 
\end{proof}
Now that we have our minimizer, Theorem \ref{thm:lp-minkowski} will
follow by writing the first order optimality condition. In order to
do this, we will need the following result, which is a much simpler
variant of Theorem \ref{thm:first-variation}:
\begin{prop}
\label{prop:bounded-variation}Fix $f\in\lc_{n}$ with $0<\int f<\infty$
and set $\psi=h_{f}$. Let $v:\RR^{n}\to\RR$ be bounded and continuous.
Then 

\[
\left.\frac{\dd}{\dd t}\right|_{t=0}\int e^{-\left(\psi+tv\right)^{\ast}}=\int v\dd S_{f}.
\]
\end{prop}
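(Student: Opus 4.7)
The approach is a direct pointwise differentiation under the integral sign, using Proposition \ref{prop:pointwise-der} as the engine and the boundedness of $v$ to supply both the application hypotheses and an $L^1$-dominating function. Write $\phi = -\log f \in \cvx_n$ and $\psi = h_f = \phi^{\ast}$, so $\psi$ is lower semi-continuous and $\psi(0) = -\log\sup f < \infty$ (using $\int f > 0$). Set $\phi_t = (\psi + tv)^{\ast}$, so $\phi_0 = \phi^{\ast\ast} = \phi$. Since $v$ is continuous and bounded, it is lower semi-continuous, bounded from below, and satisfies $v(0) < \infty$, so the hypotheses of Proposition \ref{prop:pointwise-der} with $\alpha = v$ are met. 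It yields, at every $x$ where $\phi$ is differentiable,
\[
\left.\frac{\dd}{\dd t}\right|_{t=0^+} \phi_t(x) = -v(\nabla\phi(x)).
\]

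Applying Proposition \ref{prop:pointwise-der} a second time with $\alpha = -v$ (also bounded from below) and substituting $s = -t$ shows the left derivative at $0$ coincides with the right, so the two-sided derivative of $\phi_t(x)$ exists at $t=0$ and equals $-v(\nabla\phi(x))$. The chain rule then gives
\[
\left.\frac{\dd}{\dd t}\right|_{t=0} e^{-\phi_t(x)} = v(\nabla\phi(x))\,f(x)
\]
at $f\,\dd x$-a.e. $x$. (Where $f(x) = 0$ we have $\phi(x) = \infty$; the uniform bound $|\phi_t - \phi| \le |t|M$ established below forces $\phi_t(x) = \infty$, so $e^{-\phi_t(x)} \equiv 0$ and the identity holds trivially.)

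To interchange derivative and integral, set $M = \sup|v|$ and observe that $\psi - |t|M \le \psi + tv \le \psi + |t|M$; taking Legendre transforms, which reverses inequalities and translates constants, yields $|\phi_t - \phi| \le |t|M$ on all of $\RR^n$. Consequently
\[
\left|\frac{e^{-\phi_t(x)} - f(x)}{t}\right| = f(x) \cdot \left|\frac{e^{-(\phi_t(x) - \phi(x))} - 1}{t}\right| \le f(x) \cdot \frac{e^{|t|M} - 1}{|t|} \le M e^M f(x)
\]
for all $|t| \le 1$, and $Me^M f$ is integrable since $\int f < \infty$. Dominated convergence then produces
\[
\left.\frac{\dd}{\dd t}\right|_{t=0} \int e^{-\phi_t} = \int v(\nabla\phi(x))\,f(x)\,\dd x = \int v\,\dd S_f,
\]
where the final identity is the defining pushforward property \eqref{eq:push-forward} of $S_f$.

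There is essentially no obstacle here: the entire proof is routine once Proposition \ref{prop:pointwise-der} is in hand. The only care needed is to invoke that proposition on both sides of $t = 0$ and to exhibit a dominating function, both of which are immediate from $v$ being bounded; none of the delicate technology from Section \ref{sec:variation-formula} (essential continuity, co-area, Prékopa–Leindler) is required, because the linear perturbation $tv$ keeps $\phi_t$ trapped in a tube around $\phi$ of width $|t|M$.
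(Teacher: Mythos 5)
Your proof is correct and follows essentially the same route as the paper: apply Proposition \ref{prop:pointwise-der} to both $v$ and $-v$ to get the two-sided pointwise derivative $v(\nabla\phi)f$, then dominate the difference quotients by a constant multiple of $f$ (via $|\phi_t-\phi|\le|t|M$) and conclude by dominated convergence and \eqref{eq:push-forward}. The only blemish is a harmless sign slip: $\psi(0)=\phi^{\ast}(0)=\log\sup f$, not $-\log\sup f$; what matters is that this is finite, which holds since a log-concave function with $0<\int f<\infty$ is bounded and not identically zero.
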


\begin{proof}
Fix a point $x_{0}$ where $\phi=-\log f$ is differentiable. By Proposition
\ref{prop:pointwise-der} we know that 

\[
\left.\frac{\dd}{\dd t}\right|_{t=0^{+}}\left(\psi+tv\right)^{\ast}(x_{0})=-v\left(\nabla\phi(x_{0})\right).
\]
(Recall that we very explicitly did not assume in Proposition \ref{prop:pointwise-der}
that the function $v$ is convex). Applying the same proposition to
$-v$ instead of $v$ we see that 
\[
\left.\frac{\dd}{\dd t}\right|_{t=0^{-}}\left(\psi+tv\right)^{\ast}(x_{0})=-\left(\left.\frac{\dd}{\dd t}\right|_{t=0^{+}}\left(\psi-tv\right)^{\ast}(x_{0})\right)=-v\left(\nabla\phi(x_{0})\right),
\]
 so the two sided derivative exists. Therefore if we write $f_{t}=e^{-\left(\psi+tv\right)^{\ast}}$
then by the chain rule we have $\left.\frac{\dd}{\dd t}\right|_{t=0}f_{t}(x_{0})=v\left(\nabla\phi(x_{0})\right)f$.

Choose $M>0$ such that $\left|v\right|\le M$. Then the functions
$f_{t}=e^{-\left(\psi+tv\right)^{\ast}}$ satisfy $e^{-tM}f\le f_{t}\le e^{tM}f$.
In particular all functions $f_{t}$ have the same support which we
denote by $K$. Moreover for $\left|t\right|\le1$ we have
\begin{align*}
\left|\frac{f_{t}-f}{t}\right| & \le\max\left\{ \left|\frac{e^{tM}f-f}{t}\right|,\left|\frac{e^{-tM}f-f}{t}\right|\right\} \\
 & =f\cdot\max\left\{ \left|\frac{e^{tM}-1}{t}\right|,\left|\frac{e^{-tM}-1}{t}\right|\right\} \le\left(e^{M}-1\right)\cdot f
\end{align*}
 which is an integrable function. Hence by dominated convergence we
have 
\begin{align*}
\left.\frac{\dd}{\dd t}\right|_{t=0}\int e^{-\left(\psi+tv\right)^{\ast}} & =\left.\frac{\dd}{\dd t}\right|_{t=0}\int_{K}f_{t}=\int_{K}\left(\left.\frac{\dd}{\dd t}\right|_{t=0}f_{t}\right)=\int_{K}v\left(\nabla\phi\right)f\\
 & =\int v\left(\nabla\phi\right)f=\int v\dd S_{f}.
\end{align*}
 
\end{proof}
We can now complete the proof of Theorem \ref{thm:lp-minkowski}.
In theory, since we are working with a constrained optimization problem,
the first order optimality condition should involve Lagrange multipliers.
Luckily our functions are simple enough that we may compute this optimality
condition directly and we do not have to worry about the theory of
Lagrange multipliers on an infinite dimensional space: 
\begin{proof}[Proof of Theorem \ref{thm:lp-minkowski}]
Fix $a>0$ large enough and Let $\psi$ be the minimizer from Proposition
\ref{prop:lp-minimizer}. Write $f=e^{-\phi}=e^{-\psi^{\ast}}$ and
define $K=\overline{\left\{ x:\ \psi(x)<\infty\right\} }.$ Since
$\int\psi^{p}\dd\mu<\infty$ the measure $\mu$ is supported on $K$.
From Remark \ref{rem:surface-support} the measure $S_{f}$ is also
supported on $K$. 

Fix an even, bounded and continuous function $v:\RR^{n}\to\RR$ whose
support is contained in the \emph{interior} of $K$. Define 
\[
\psi_{t}=\psi+tv\psi^{1-p}+\log a-\log\int e^{-\left(\psi+tv\psi^{1-p}\right)^{\ast}}.
\]
 Note that 
\[
\psi_{t}^{\ast}=\left(\psi+tv\psi^{1-p}\right)^{\ast}-\log a+\log\int e^{-\left(\psi+tv\psi^{1-p}\right)^{\ast}}
\]
 so that $\int e^{-\psi_{t}^{\ast}}=a$. Moreover $v\psi^{1-p}$ is
a bounded function, since $v$ is bounded on $\RR^{n}$ and $\psi$
is bounded on the support of $v$. Let us write $\left|v\psi^{1-p}\right|\le M$.
We then also have 
\[
\log\int e^{-\left(\psi+tv\psi^{1-p}\right)^{\ast}}\le\log\int e^{-\left(\psi+\left|t\right|M\right)^{\ast}}=\log\left(e^{\left|t\right|M}\int e^{-\psi^{\ast}}\right)=\log a+\left|t\right|M
\]
 and similarly $\log\int e^{-\left(\psi+tv\psi^{1-p}\right)^{\ast}}\ge\log a-\left|t\right|M$.
Hence $\left|\psi_{t}-\psi\right|\le2\left|t\right|M$. In particular,
since $\min\psi=\psi(0)>0$, there exists $\delta>0$ such that $\psi_{t}\ge\delta$
if $\left|t\right|$ is small enough.

By Proposition \ref{prop:bounded-variation} we have 
\[
\left.\frac{\dd}{\dd t}\right|_{t=0}\psi_{t}=v\psi^{1-p}-\frac{1}{a}\cdot\int v\psi^{1-p}\dd S_{f}.
\]
 The function $x\mapsto x^{p}$ is $\frac{p}{\delta^{1-p}}$-Lipschitz
on the interval $[\delta,\infty)$. Since for small enough $\left|t\right|$
we have $\psi,\psi_{t}\ge\delta$ we obtain
\begin{equation}
\left|\frac{\psi_{t}^{p}-\psi^{p}}{t}\right|\le\frac{p}{\delta^{1-p}}\left|\frac{\psi_{t}-\psi}{t}\right|\le\frac{2Mp}{\delta^{1-p}}.\label{eq:p-dom-bound}
\end{equation}

From the fact that $\psi$ is a minimizer it follows that $\int\psi_{t}^{p}\dd\mu\ge\int\psi^{p}\dd\mu$
for all $\left|t\right|$ small enough. Because of \eqref{eq:p-dom-bound}
we may apply dominated convergence and conclude that
\begin{align*}
0 & =\left.\frac{\dd}{\dd t}\right|_{t=0}\int\psi_{t}^{p}\dd\mu=\int\left(\left.\frac{\dd}{\dd t}\right|_{t=0}\psi_{t}^{p}\right)\dd\mu\\
 & =\int\left(p\psi^{p-1}\cdot\left(v\psi^{1-p}-\frac{1}{a}\int v\psi^{1-p}\dd S_{f}\right)\right)\dd\mu\\
 & =p\int v\dd\mu-\frac{1}{a}\int v\psi^{1-p}\dd S_{f}\cdot\int p\psi^{p-1}\dd\mu.
\end{align*}
We see that 
\[
\int v\dd\mu=c\cdot\int v\psi^{1-p}\dd S_{f}=c\cdot\int v\dd S_{f,p}
\]
 for some constant $c>0$ that depends on $\mu$ and $\psi$ but not
on $v$. Since $\mu$ and $S_{f,p}$ are even and supported on $K$,
and since this equality holds for \emph{every }even, bounded and continuous
function $v$ whose support is contained in the interior of $K$,
it follows that $\mu=c\cdot S_{f,p}$ . This completes the proof.
\end{proof}
\bibliographystyle{plain}
\bibliography{paper.bbl}

\end{document}